\newcommand{\Ainfty}{$\mathrm{A}(\!\infty\!)$}
\DeclareOldFontCommand{\rm}{\normalfont\rmfamily}{\mathrm}
\DeclareOldFontCommand{\sf}{\normalfont\sffamily}{\mathsf}
\DeclareOldFontCommand{\tt}{\normalfont\ttfamily}{\mathtt}
\DeclareOldFontCommand{\bf}{\normalfont\bfseries}{\mathbf}
\DeclareOldFontCommand{\it}{\normalfont\itshape}{\mathit}
\DeclareOldFontCommand{\sl}{\normalfont\slshape}{\@nomath\sl}
\DeclareOldFontCommand{\sc}{\normalfont\scshape}{\@nomath\sc}
\def\titlerunning#1{\gdef\titrun{#1}}
\def\author#1{\gdef\autrun{\def\and{\unskip, }#1}\gdef\@author{#1}}
\def\address#1{{\def\and{\\\hspace*{18pt}}\renewcommand{\thefootnote}{}%
\footnote {#1}}%
\markboth{Marc Cabanes and Britta Sp\"ath}{\titrun}}
\def\email#1{e-mail: #1}
\def\subjclass#1{{\renewcommand{\thefootnote}{}%
\footnote{\emph{Mathematics Subject Classification (2010):} #1}}}
\newcommand{\bGzwei}{{{\mathbf G}_{2}}}
\newcommand{\otw}{\text{otherwise}}
\newtheorem{thm}{Theorem}
\newtheorem{lem}[thm]{Lemma}
\newtheorem{mainthm}{Theorem}
\newtheorem{theo}[thm]{Theorem}
\theoremstyle{definition}
\newtheorem{rem}[thm]{Remark}
\theoremstyle{plain}
 \newtheorem{prop}[thm]{Proposition}
 \newtheorem{cor}[thm]{Corollary}
\theoremstyle{definition}
 \newtheorem{defi}[thm]{Definition}{\bf}{}
 \newtheorem{notation}[thm]{Notation}
\numberwithin{equation}{thm}
\numberwithin{thm}{section}
\def\AA#1{{{\rm A}$(#1)$}}\def\BB#1{{{\rm B}$(#1)$}}
\def\norm#1#2{{\operatorname N}_{#1}(#2)}
\def\cent#1#2{{\operatorname C}_{#1}(#2)}
\newcommand{\Id}{\operatorname {Id}}
\newcommand{\w}{\widetilde}
\newcommand{\wt}{\widetilde}
\newcommand{\wh}{\widehat}
\newcommand{\bN}{{\mathbf N}}
\newcommand{\wbT}{{\widetilde {\mathbf T}}}
\newcommand{\Tr}{\operatorname{Tr}}
\newcommand{\wG}{{\w G}}
\newcommand{\wN}{{\w N}}
\newcommand{\bC}{{\mathbf C}}
\newcommand{\la}{\ensuremath{\lambda}}
\newcommand{\bT}{{\mathbf T}}
\newcommand{\bB}{{\mathbf B}}
\newcommand{\bG}{{{\mathbf G}_{\mathrm{sc}}}}
\newcommand{\GG}{{{\mathbf G}}}
\newcommand{\bS}{{\mathbf S}}
\newcommand{\bH}{{\mathbf H}}
\newcommand{\wbG}{\widetilde{\mathbf G}}
\newcommand{\Irr}{\mathrm{Irr}}
\newcommand{\bZ}{{\mathbf Z}}
\newcommand{\Dia}{\operatorname{Diag}}
\newcommand{\SL}{\operatorname{SL}}
\newcommand{\ZZ}{\ensuremath{\mathbb{Z}}}
\newcommand{\CC}{\ensuremath{\mathbb{C}}}
\newcommand{\ov}{\overline }
\def\oo#1{\overline{\overline{#1}}}
\newcommand{\xx}{\mathbf x }\def\ww{{\widetilde{\bf w}_0} }
\newcommand{\n}{\mathbf n }\newcommand{\nn}{\mathbf n }
\newcommand{\h}{\mathbf h }\newcommand{\hh}{\mathbf h }
\renewcommand{\o}{\overline}
\newcommand{\Cent}{\ensuremath{{\rm{C}}}}
\newcommand{\NNN}{\ensuremath{{\mathrm{N}}}}
\newcommand{\Sym}{{\operatorname{S}}}
\def\restr#1|#2{\left.#1\right\rceil_{#2}}
\newcommand{\FF}{\ensuremath{\mathbb{F}}}
\newcommand{\tD}{\ensuremath{\mathsf{D}}}
\newcommand{\Cy}{\mathrm C}
\newcommand{\tC}{\mathsf C}
\newcommand{\tB}{\mathsf B}
\newcommand{\tE}{\mathsf E}
\newcommand{\tA}{\mathsf A}
\newcommand{\cE}{\mathcal E}
\newcommand{\cZ}{\mathcal Z}
\newcommand{\calO}{\mathcal O}
\newcommand{\calL}{\mathcal L}
\newcommand{\calT}{\mathcal T}
\newcommand{\calC}{\mathcal C}
\newcommand{\calG}{\ensuremath{\mathcal G}}
\newcommand{\al}{{\alpha}}
\newcommand{\si}{{\sigma}}
\newcommand{\cO}{{\mathcal O}}
\def\Set#1{\Set@h#1@}
\def\Lset#1{\Lset@h#1@}
\def\Set@h#1|#2@{\left\{\left.#1\vphantom{#2}\hskip.1em\,\right|\,\relax #2\right\}}
\def\Lset@h#1@{\left\{#1\right\}}
\def\CALC#1{\CALC@h#1@}
\def\CALC@h#1|#2@{\calC^{#1}(#2)}
\def\CALCrad#1{\CALCrad@h#1@}
\def\CALCrad@h#1|#2@{\calC_\radic^{#1}(#2)}
\def\CALCNC#1{\CALCNC@h#1@}
\def\CALCNC@h#1|#2@{\calC_{\radic,nc}^{#1}(#2)}
\def\restr#1|#2{\left.#1\right\rceil_{#2}}
\def\spann<#1>{{\left{\langle}}#1{\right{\rangle}}}
\def\Spann<#1>{\Spann@h#1@}
\def\Spann@h#1|#2@{\left\langle\left.#1\vphantom{#2}\hskip.1em\right.\mid\relax #2 \right\rangle}
\def\Set#1{\Set@h#1@}
\def\Set@h#1|#2@{\left\{\left.#1\vphantom{#2}\hskip.1em\,\right.
	\mid\relax #2\right\}}
\def\set#1{\set@h#1@}
\def\set@h#1@{\left\{#1\right\}}
\newcommand{\Aut}{\mathrm{Aut}}
\newcommand{\Out}{\ensuremath{\mathrm{Out}}}
\def\spann<#1>{\left\langle#1\right\rangle}
\newcommand{\Z}{\operatorname Z}\newcommand{\G}{\operatorname G}
\newcommand{\GL}{\operatorname {GL}}
\newcommand{\calR}{\mathcal R}
\newcommand{\und}{{\text{ and }}}
\newcommand{\wrt}{with respect to{ }}
\newlist{asslist}{enumerate}{2} 
\setlist[asslist]{label=(\roman*), ref=\thethm(\roman*)}
\newlist{thmlist}{enumerate}{1} 
\setlist[thmlist]{label=(\alph*), ref=\thethm(\alph*)}
\newcommand{\ra}{\rightarrow}
\newcommand{\lra}{\longrightarrow}
\newcommand{\Norm}{\operatorname{N}}
\begin{document}
	
\titlerunning{McKay condition for types $\tB$ and $\tE$}
\title{Descent equalities and the inductive McKay condition for types $\tB$ and $\tE$}
\date{ }
\author{Marc Cabanes and Britta Sp\"ath}
\maketitle

\address{M.C.: CNRS, Institut de Math\'ematiques Jussieu-Paris Rive Gauche, Place Aur\'elie Nemours, 75013 Paris, France.
	\email{marc.cabanes@imj-prg.fr}\\
B. S.: School of Mathematics and Natural Sciences
University of Wuppertal, Gau\ss str. 20, 42119 Wuppertal, Germany, \email{bspaeth@uni-wuppertal.de}}
\subjclass{ 20C20 (20C33 20C34)}

\abstract{We establish the inductive McKay condition introduced by Isaacs-Malle-Navarro \cite{IMN} for finite simple groups of Lie types $\tB_l$ ($l\geq 2$), $\tE_6$, $^2\tE_6$ and $\tE_7$, thus leaving open only the types $\tD$ and $^2\tD$. We bring to the methods previously used by the authors for type $\tC$ \cite{CS17C} some descent arguments using Shintani's norm map. This provides for types different from $ \tA, \tD, {}^2\tD$ a uniform proof of the so-called global requirement of the criterion given by the second author in \cite[2.12]{S12}. The local requirements from that criterion are verified through a detailed study of the normalizers of relevant Levi subgroups and their characters.}

\tableofcontents
\renewcommand{\thesubsection}{\thesection.\Alph{subsection}} 

\section{Introduction}

The celebrated 
McKay conjecture on character degrees asserts that for any finite group $G$ and prime $p$   
\begin{equation}\label{McKay}
\text{  $ |\Irr_{p'} (G)|=|\Irr_{p'} (\NNN_G(P))|   $ }
\end{equation} where $\Irr_{p'}$ denotes the set of irreducible characters of degree prime to $p$ and $P$ is a Sylow $p$-subgroup of $G$.

The reduction theorem proved by Isaacs-Malle-Navarro \cite{IMN} reduces this conjecture to the checking of the so-called \emph{inductive McKay condition} for each finite simple group $S$ and prime $p$ (see \cite[\S 10]{IMN}). The inductive McKay condition has been checked for many simple groups leaving open the cases of simple groups of Lie types $\tB$, $\tD$, $^2\tD$, $\tE_6$, $^2\tE_6$, $\tE_7$ for odd $p$ and different from the characteristic of the group, see \cite{ManonLie}, \cite{S12}, \cite{CS13}, \cite{MS16}, \cite{CS17A}, \cite{CS17C}. One of the main results of the present paper is the following.

\begin{mainthm}\label{thmA}
 The finite simple groups of Lie types $\tB$, $\tE_6$, $^2\tE_6$, and $\tE_7$ satisfy the inductive McKay condition for all prime numbers.
\end{mainthm}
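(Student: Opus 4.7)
The plan is to verify Späth's criterion \cite[2.12]{S12}, which reduces the inductive McKay condition for a simple group $S=\bG^F/\ZZZ(\bG^F)$ of Lie type to two sets of conditions: a \emph{global requirement}, producing an $\Aut(\bG^F)_\calN$-stable $\Irrl$-complement inside $\bG^F$ with controlled stabilizers and extendibility, and a \emph{local requirement}, providing an analogous complement inside $\NNN_{\bG^F}(\calN)$, where $\calN$ is a suitable Sylow $d$-torus normalizer. The defining-characteristic case is already known (see \cite{ManonLie}), so I may assume throughout that $\ell$ is distinct from the defining characteristic $p$ of $\bG$.

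For the \emph{global} step I would adapt the method developed in \cite{CS17C} for type $\tC$, now to types $\tB_l$, $\tE_6$, $^2\tE_6$, $\tE_7$, using Shintani descent via the norm map $\Sh\colon \wbG^{F^m}/{\sim} \to \wbG^F/{\sim}$ between twisted conjugacy classes. Picking $m$ sufficiently divisible, Shintani's map converts the action of the field automorphism on $\wbG^F$ into conjugation on $\wbG^{F^m}$, so that extendibility of a character to its inertia inside $\wbG^F \rtimes \langle F \rangle$ becomes a statement about ordinary character extension inside the richer group $\wbG^{F^m}$. Combined with Jordan decomposition in $\wbG^{F^m}$, this should yield a \emph{uniform} proof across the four types that the $\Irrl$-complement produced by Lusztig's parametrization on $\wbG^F$ satisfies the required $\Aut(\bG^F)_\calN$-equivariance and extendibility, modulo a small list of semisimple classes that behave exceptionally at $\ell=2$ or under the graph automorphism of $\tE_6$.

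For the \emph{local} step I would analyze the structure of $\NNN_{\bG^F}(\calN)$ prime by prime. Here $\calN = \NNN_\bG(\bS)^F$ for a suitable Sylow $d$-torus $\bS$, and the relative Weyl group $W_\bG(\bS)^F$ is a complex reflection group whose character theory is well understood. What requires effort is to arrange compatible extensions to the full normalizer inside $\wbG^F \rtimes D$, where $D$ is a complement to $\Inn(\bG^F)$ in $\Aut(\bG^F)$. For the exceptional types $\tE_6$, $^2\tE_6$, $\tE_7$ the centralizing Levi $\bL = \Cent_\bG(\bS)$ is low-rank and essentially classical or small exceptional, so equivariant extensions can be constructed explicitly using Gelfand--Graev characters and cuspidal Jordan data. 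For type $\tB_l$ the argument must be made uniform in $l$ by reducing the relevant calculations to Weyl groups of type $\tB$ and to symmetric groups acting on wreath-product-style labels for characters of the Levi.

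The main obstacle I expect is the interaction between the local analysis in type $\tB_l$ and the bad primes and classes that survive the global Shintani argument. At those primes --- typically $\ell=2$ and certain primes dividing $|\ZZZ(\bG)|$ --- one must build the bijection by hand and check, usually via explicit character-table data and careful Clifford theory on $\wbG^F/\bG^F$, that the diagonal and (for $\tE_6$) graph automorphisms act compatibly on both sides. Carrying this out concurrently with the uniform-in-$l$ local analysis for type $\tB_l$, so that the verified global and local data actually match under the criterion of \cite[2.12]{S12}, is where the bulk of the technical work will lie.
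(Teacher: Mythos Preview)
Your overall framework---reduce to Sp\"ath's criterion \cite[2.12]{S12} and split into a global and a local requirement---matches the paper. But two concrete points in your plan are off, and they are exactly where the work lies.

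\textbf{Global step.} You describe Shintani descent as ascending to a larger group $\wbG^{F^m}$ so that extendibility becomes ordinary character extension there. The paper does the opposite: it \emph{descends}. Writing $F=F_1^m$, the norm map is used to prove the counting equality
\[
|\Irr(\GG^F)^{\langle tF_1\rangle}|=|\Irr(\GG^{F_1})^{\langle t_1\rangle}|
\]
(Theorem~\ref{pr_Cla_Sh}), and a separate descent via Jordan decomposition and the genericity of unipotent characters gives
\[
|\Irr(\GG^F)^{\langle \wbG^F,\,F_1\rangle}|=|\Irr(\GG^{F_1})^{\wbG^{F_1}}|
\]
(Theorem~\ref{thm_Char_count}). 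Combining these yields $|\Irr(G)^{\langle\delta f\rangle}|=|\Irr(G)^{\langle\delta,\,f\rangle}|$, which is the \emph{stabilizer} part of condition \Ainfty. The Shintani map is not used to prove extendibility; extendibility is automatic in types $\tB$, $^2\tE_6$, $\tE_7$ because $E$ is cyclic, and in type $\tE_6$ it comes from a separate trace argument (Theorem~\ref{GtildeExt}, Corollary~\ref{gammaF_0stab}) showing that every $\gamma$-fixed character of $\wbG^F$ has a $\gamma$-invariant extension to $\wbG^F\rtimes\langle F_1\rangle$. Your plan does not contain the key counting idea, and the direction you propose would not give the stabilizer statement.

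\textbf{Local step.} Your anticipated difficulty at $\ell=2$ and bad primes is misplaced: $\ell=2$ is already covered by \cite{MS16} and $\ell=p$ by \cite{S12}, so only odd $\ell\ne p$ remain, organised by $d$ the order of $q$ bmod $\ell$. For the exceptional types the paper does not use Gelfand--Graev characters; the relative Weyl groups $W_{\w s}\le W_s\le W_d$ are small enough that the required invariance and extension properties are checked by direct enumeration (Proposition~\ref{Gunter}). For type $\tB_l$ the uniform-in-$l$ argument is not a reduction to symmetric-group combinatorics on labels, but an explicit construction of an $\wh N$-equivariant extension map for the inclusion $H_d\unlhd V_d$ inside Tits' extended Weyl group (Theorem~\ref{vgoodB}), followed in the non-regular case by a careful analysis of how $N_1$ acts on the smaller factor $G_2\cong\mathrm{Spin}_{2(l-l')+1}(q)$, crucially using the already-proved \Ainfty\ for that smaller group (Proposition~\ref{prop_1013}). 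Your proposal does not mention the extended Weyl group or this inductive use of \Ainfty, and these are precisely the mechanisms that make the local argument go through.
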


The proof of this theorem, as most previous verifications of the inductive McKay condition, uses the criterion \cite[2.12]{S12}, for which we give a streamlined version in Theorem~\ref{Glo+Loc} below.

 For $G$  the universal covering of a simple group $S$ of Lie type, the outer automorphism group $\Out(G)$ decomposes as a semi-direct product Out$(G)=\Dia (G)\rtimes E$ where $\Dia (G)$ is the group of so-called outer diagonal automorphisms of $G$ (denoted by $Outdiag(G)$ in \cite[\S 2.5]{GLS3}) and $E$ is a group of field and graph automorphisms (denoted by $\Phi_G\Gamma_G$ in \cite{GLS3}). As a key step we prove

\begin{mainthm}\label{thmB}
Assume $S$ is of type $\tB$, $\tE_6$,  $^2\tE_6$ or  $\tE_7$. For every $\chi\in\Irr(G)$ there exists a $\Dia(G)$-conjugate of $\chi$ whose stabilizer in $\Out(G)$ is of the form $D 'E'$ where $D '\leq \Dia(G)$ and $E'\leq E$.
\end{mainthm}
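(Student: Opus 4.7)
The plan is to leverage the regular embedding $\iota\colon\bG\hookrightarrow\wbG$ of the simply connected group into one with connected centre. Writing $\wt G=\wbG^F$ one has $\wt G/G\,\Z(\wt G)\cong\Dia(G)$, which for the four types considered is cyclic of order at most $3$. The field and graph automorphisms in $E$ extend naturally to automorphisms of $\wt G$ stabilising $G$, so the action of $\Out(G)$ on $\Irr(G)$ factors through an action of $\wt G\rtimes E$. For $\chi\in\Irr(G)$ the stabiliser $\Out(G)_\chi$ is the image of the stabiliser of $\chi$ in $\wt G\rtimes E$; the latter fits in a (not necessarily split) sequence $1\to \wt G_\chi\to (\wt G\rtimes E)_\chi\to E_\chi\to 1$, producing a projection $E_\chi\leq E$ and an intersection $D_\chi\leq\Dia(G)$. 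The task is to exhibit, after replacing $\chi$ by a suitable $\Dia(G)$-conjugate, a complement to $D_\chi$ inside this stabiliser that is contained in $E$.

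The key step is the construction, for each $\chi$, of an extension $\wt\chi\in\Irr(\wt G_\chi)$ that is fixed by $E_\chi$ via its chosen lift to $\Aut(\wt G)$. Since $\wt G/G$ is cyclic, extensions of $\chi$ to $\wt G_\chi$ always exist and form a torsor under $\Irr(\wt G_\chi/G)$; hence an $E_\chi$-invariant extension exists iff an associated class in $H^1(E_\chi,\Irr(\wt G_\chi/G))$ vanishes. Once such a $\wt\chi$ is available, the lifts of $E_\chi$ stabilise $\chi$ on the nose and the desired splitting $\Out(G)_\chi=D_\chi E_\chi$ follows. One analyses this obstruction via Jordan decomposition in $\Irr(\wt G)$: write $\wt\chi$ as corresponding to a pair $(s,\psi)$ with $s\in\wbG^{*F}$ semisimple and $\psi$ a unipotent character of $\cent{\wbG^*}{s}^F$, and track how $E$ acts on these parameters. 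For type $\tB_l$ this analysis is fed through Shintani's norm map and the descent equalities developed earlier in the paper, transferring the question to the already handled type $\tC$ situation of \cite{CS17C}. For the exceptional types one falls back on the explicit description of $\cent{\wbG^*}{s}^F$ and its relative Weyl group together with the known character tables.

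The main obstacle is controlling how twisting by $\Dia(G)$-conjugation changes the cohomology class. Replacing $\chi$ by $\chi^d$ shifts the torsor of extensions, and hence the obstruction class, and one must show that for some $d\in\Dia(G)$ the class becomes trivial. In type $\tE_6$ with $3\mid q-1$, the graph automorphism inverts $\Dia(G)\cong\ZZ/3$, and the obstruction may genuinely be non-zero for the initial choice of $\chi$; here cyclicity of $\Dia(G)$ plus a careful case analysis on the $(s,\psi)$-parameters are needed to locate a suitable $d$. A similar but lighter analysis handles the $\ZZ/2$-valued cases of types $\tB_l$ and $\tE_7$, where the non-existence of graph automorphisms in $E$ simplifies matters. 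Once this translation is achieved for each character, Theorem~B follows from standard Clifford-theoretic bookkeeping inside $\wt G\rtimes E$.
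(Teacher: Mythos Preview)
Your framing has a circularity that undermines the whole outline. You define $E_\chi$ as the \emph{projection} of $(\wt G\rtimes E)_\chi$ to $E$ and then ask for an extension $\wt\chi\in\Irr(\wt G_\chi)$ fixed by $E_\chi$. But an element $e\in E_\chi$ in this sense need not fix $\chi$ itself---it only sends $\chi$ to a $\wt G$-conjugate---so $e$ does not act on $\Irr(\wt G_\chi\mid\chi)$ at all, and ``$E_\chi$-invariant extension'' is not well-defined. What you actually need is the vanishing, after $\Dia(G)$-conjugation, of the class of the cocycle $E_\chi\to \wt G/\wt G_\chi$ recording how $e$ moves $\chi$ inside its $\wt G$-orbit; but this vanishing is precisely the stabiliser splitting asserted in Theorem~B, so your ``key step'' presupposes the conclusion. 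Even granting a corrected formulation, the proposal contains no mechanism for proving the obstruction vanishes: you gesture at Jordan decomposition and Shintani descent but give no argument.

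The paper's actual proof is quite different in spirit. It is a pure counting argument: one shows $|\Irr(\GG_{\mathrm{sc}}^F)^{\langle ze\rangle}|=|\Irr(\GG_{\mathrm{sc}}^F)^{\langle z,e\rangle}|$ for $z\in\Dia(G)$, $e\in E_r$ (with $r=|\Z(\GG_{\mathrm{sc}})|$), which in the abelian-outer cases is equivalent to the splitting. Each side is shown, via a separate descent equality (Theorems~\ref{pr_Cla_Sh} and~\ref{thm_Char_count}), to equal a corresponding count for the smaller group $\GG_{\mathrm{sc}}^{F_1}$ where $F=F_1^m$; the Shintani norm map enters only to match class counts, not to build invariant extensions. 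There is no reduction to type $\tC$---the argument is uniform across $\tB$, $\tC$, $\tE$ and in fact reproves the $\tC$ case---and the exceptional types are handled by the same descent, not by character tables. Extensions appear only in the separate extendibility clause of $\mathrm{A}(\infty)$ for untwisted $\tE_6$, and there too the proof (Theorem~\ref{GtildeExt}) is a Shintani trace computation forcing a certain non-extendibility count to be zero.
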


The above property of $\Irr(G)$ as an $\Out(G)$-set, called \Ainfty\ in Definition~\ref{A(d)} below, is known in types $\tA$, $^2\tA$, $\tC$ (see \cite{CS17A}, \cite{CS17C}) and for characters $\chi$ either semi-simple or of odd degrees in all types (see \cite{S12}, \cite{MS16}).
This question about characters of groups of rational points of simply-connected reductive groups is not easily solved by the present knowledge about characters of groups of Lie type which focuses mostly on reductive groups with connected center. It has however gained interest lately, see for instance \cite{Ma17}, \cite{Taylor}. Whenever Out$(G)=\Dia(G)  E$ is abelian (types $\tB$, $\tC$, $\tE_7$), the above statement about stabilizers is just that 
\begin{equation}\label{Irrdelga}
\text{  $   |\Irr(G)^{\spann<\delta f>}|=|\Irr(G)^{\spann<\delta \, ,\, f>}| $ }
\end{equation}
 for any $\delta\in\Dia(G)$, $f\in E$, using the exponent notation for fixed points (see \cite[2.2]{CS17C}). In our case we prove this equality by showing that each number in the above equation is in fact equal to a number defined similarly for the smaller group $G^f$, whence the term of descent equality. In the proof the norm map introduced by Shintani \cite{Shintani76} is used in an elementary and effective way.

Those general statements are probably of independent interest with possible applications to the Jordan decomposition of characters, as in \cite[\S 8]{CS17A}. Given how central McKay's conjecture has proved to be in representation theory of finite groups, it is not surprising that the property of stabilizers \Ainfty\ has applications to the checking of other counting conjectures through the classification of finite simple groups (see for instance \cite{CS15}). Via the triangularity property of decomposition matrices this can also apply to conjectures about modular characters as shown in \cite[7.4, 7.6]{CS13}.

All of the above concerns the first ``global" part of the criterion given in Theorem~\ref{Glo+Loc} and applies uniformly to types $\tB$, $\tC$ (thus reproving \cite[3.1]{CS17C}), $^2\tE_6$, $\tE_7$, and with some adaptations to $\tE_6$ (see Sect.\,3.D).

The rest of the paper is more specific to McKay's conjecture, being about local subgroups and related properties of their characters.
Concretely, the ``local" assumptions of the criterion from \cite{S12} are implied by the condition \AA d  from Definition~\ref{A(d)} for integers $d\geq 1$ such that the cyclotomic polynomial $\Phi_d$ divides the polynomial order of our simple group. In Sect.\,\ref{sec_locB} we verify that type $\tB_{}$ satisfies \AA d while Sect.\,\ref{sec_locE} deals with types $\tE$. 

The checking splits quite naturally into two parts. The one arising from numbers $d$ that are \emph{regular} for the Weyl group involved (divisors of $2l$ in type $\tB_l$) correspond to prime numbers in the statement (\ref{McKay}) of McKay's conjecture such that any Sylow subgroup $P\leq G$ has abelian centralizer $\Cy_G(P)$. This is the most technical part of our proof (see Sect.\,\ref{sec_locB}.A-C). Probably owing to the particular nature of the spin groups and its consequences on the structure of $\norm GP$, the checking for type $\tB$ is quite different from the one made for type $\tC$ in \cite{CS17C} and several cases have to be treated separately. The part of the proof dealing with so-called relative Weyl groups
(first introduced by Brou\'e-Malle-Michel \cite{BMM93}) however coincides with type $\tC$ in the case of type
$\tB$, while in the cases of types $\tE$ we use a proof provided to the authors by G. Malle.

The case of non-regular integers $d$ such that $\Phi_d$ divides the polynomial order of $G$ involves simple groups of smaller cardinality than $|G/\Z (G)|$ for which condition \Ainfty\ is used in a crucial way.

\medskip\noindent{\bf Acknowledgements.} {This material is partly based upon work supported by the NSF under Grant DMS-1440140 while the authors were in residence at the MSRI, Berkeley CA.}
We also thank our home institutions for having allowed that stay. The first author thanks BU Wuppertal for its hospitality during several visits. We thank Gunter Malle for the proof of Proposition~\ref{Gunter} and his remarks on our manuscript.

\section{Quasi-simple groups of Lie type and 
	{inductive McKay condition}}

Let us recall some basic notation about normal inclusions and characters. Let $Y\leq X$ be an inclusion of finite groups. For $\theta\in\Irr(Y)$ one denotes by $X_{Y,\theta}$ , or just $X_\theta$ whenever $Y\unlhd X$, the inertia subgroup of $\theta$ in $\NNN_X(Y)$. If $\chi\in\Irr(X)$ we write $\Irr(Y\mid\chi)$ for the set of irreducible constituents of $\restr \chi|Y$, the restriction of $\chi$ to $Y$. For $\theta\in\Irr(Y)$, $\theta^X$ denotes the character of $X$ obtained by induction of $\theta$.

If $Y\unlhd X$ is a normal inclusion of finite groups, we say that \emph{maximal extendibility} holds with respect to $Y\unlhd X$ if for any $\theta\in\Irr(Y)$ there exists $\w\theta\in\Irr(X_{\theta})$ such that $\theta =\restr{\w\theta}|{Y}$. In this context, an \emph{extension map} is then any map$$\Lambda\ \colon\ \Irr(Y)\ \ \longrightarrow\coprod_{X'\ ;\ Y\unlhd X'\leq X}\Irr(X')$$ such that for every $\theta\in\Irr(Y)$, $\Lambda(\theta)\in\Irr(X_{\theta})$ and $\theta =\restr{\Lambda(\theta)}|{Y}$.

\subsection{Finite groups of Lie type and automorphisms}
We denote by boldface letters $\GG$ or $\bH$ connected reductive groups over an algebraic closure $\FF$ of the field with $p$ elements, for $p$ a prime number. We denote $\GG_{\mathrm{ad}}=\GG/\Z(\GG)$.
We denote by $\bG\to [\GG ,\GG]$ the unique simply connected covering. A \emph{regular embedding} $\GG\leq\w\GG$ is any closed embedding of reductive groups with connected $\Z(\w\GG)$ and $\w\GG =\Z(\w\GG)\GG$ (see \cite[15.1]{CE04}).

When $\GG$ is defined over a finite subfield $\FF_{q^{}} $ ($q$ a power of $p$), this yields a Frobenius endomorphism $F\colon \GG\to\GG$ (see \cite[\S 3]{DM}). For any cyclotomic polynomial $\Phi_d$ ($d\geq 1$), one has a notion due to Brou\'e-Malle of \emph{$d$-torus} which is a class of $F$-stable tori $\bS\leq \GG$ such that, among other properties, $\bS^F $ has order a power of $\Phi_d(q)$ (see \cite[25.6]{MT}). 

The choice of a pair $\bT\leq \bB$ where $\bT$ is a maximal torus and $\bB$ is a Borel subgroup of $\GG$ allows one to associate a root system $\Phi\subseteq \mathrm{Hom} (\bT,\FF^\times)$ along with the subsets $\Phi^+\supseteq\Delta$ of positive and simple roots. To each $\al\in\Phi$, there corresponds a one-parameter unipotent subgroup $$\FF\ni t\mapsto\xx_\al (t)\in\GG $$ with $\xx_\al(\FF)\leq \bB$ when $\al\in\Phi^+$. One also defines $\n_\al(t):=\xx_{\al} (t)\xx_{-\al} (-t^{-1})\xx_{\al} (t)\in\NNN_\GG(\bT)$ and $\h_\al(t)=\n_{\al} (t)\n_{\al} (1)^{-1}\in \bT$ for $t\in\FF^\times$. The $\xx_\al(\FF)$'s for $\al\in \pm\Delta$ generate $[\GG ,\GG]$. The \emph{type} of $\GG$ is by definition the one of $\Phi$ as a root system.

Assume now that $\bT\leq\bB\leq \GG$ is as above with $\GG=\bG$ of irreducible type. One defines a Frobenius endomorphism $F_0\colon \bG\to\bG$ by $F_0(\xx_\al(t))=\xx_\al (t^p)$ for all $t\in\FF$, $\al\in\Phi$. It is easy to construct a regular embedding $\bG\leq \w\GG$ with $\Z(\w\GG)$ a torus of rank 1 or 2 according to $\Z(\bG)$ being cyclic or not, and to extend $F_0$ to a map $F_0\colon \w\GG\to \w\GG$ defining $\w\GG$ over $\FF_p$. Indeed since $F_0(s)=s^p$ for any $s\in \bT$, all subtori of $\bT$ are $F_0$-stable, so one can define $\w\GG:=\bG\times_{\Z(\bG)}\bZ$ (central product) where $\Z(\bG)\leq\bZ\leq\bT$ with $\bZ$ a subtorus of minimal rank subject to $\Z(\bG)\leq\bZ$.

 Denote by $\Aut(\Delta)$ the group of graph automorphisms, i.e. permutations of $\Delta$ preserving lengths and pairwise angles of simple roots. Then any $\gamma\in \Aut(\Delta)$ induces an algebraic automorphism of $\bG$ defined by $\gamma(\xx_\al(t))=\xx_{\gamma(\al)}(t)$ for any $t\in\FF$, $\al\in \pm\Delta$. This makes a group $\Gamma_{\bT\leq\bB}\cong \Aut(\Delta)$ of automorphisms of $\bG$. This group can be made to act on $\w\GG$ by algebraic automorphisms commuting with $F_0$ (see for instance \cite[3.1]{S12}). Indeed for classical types $\bG=\SL_n(\FF), \dots$ one has well-known regular embeddings $\w\GG=\GL_n(\FF),\dots$ where this is clear. For exceptional types, only type $\tE_6$ needs some care and one chooses in the construction above $\bZ:=\{\hh_{\al_1}(t)\hh_{\al_2}(t^{-1})\hh_{\al_5}(t)\hh_{\al_6}(t^{-1})\mid t\in\FF^\times\}$ (notations of \cite[Table 1.12.6]{GLS3}) which is $\Gamma_{\bT\leq\bB}$-stable.

 The finite quasi-simple groups we consider are of the form $\GG_{\mathrm{sc}}^F$ where $F=F_0^m\gamma_0$ with $F_0$ as above, $m\geq 1$ and $\gamma_0\in\Gamma_{\bT\leq\bB}$. One then denotes $q:=p^m$. We keep $\bG\leq \w\GG$ a regular embedding as above.

\begin{defi}\label{DefE}
	Let $E$ be the subgroup of $\Aut (\GG_{\mathrm{sc}}^F)$ generated by $\restr{F_0}|{\GG_{\mathrm{sc}}^F}$ and the $\restr{\gamma}|{\GG_{\mathrm{sc}}^F}$ for $\gamma\in\Gamma_{\bT\leq\bB}$. 
\end{defi}

 As said before, the action of $F$ extends to $\w\GG$ and the action of $E$ also extends to $\w\GG^F$.

 \subsection{Inductive McKay conditions, global and local.}

As in most previous verifications for simple groups with a non-cyclic outer automorphism group (see \cite{S12}, \cite{MS16}, \cite{CS17A}, \cite{CS17C}) the inductive McKay condition will be checked here through \cite[2.12]{S12}. Theorem~\ref{Glo+Loc} below is a streamlined version of that criterion taking into account essentially \cite[\S 6]{CS17A} and the case of the defining prime \cite[1.1]{S12}. Note in particular that the reference to a prime number is replaced by the reference to a cyclotomic polynomial. We first define the following properties \Ainfty, \AA d, \BB d for $d\geq 1$ that represent the essential requirements of \cite[2.12]{S12}.

\begin{defi}\label{A(d)}  Keep $\bG$, $\wbG$, $F$, $E$ as above. We define the following properties  
	\begin{enumerate}
		\item[\Ainfty] \label{Ainfty} Every $\chi\in \Irr(\GG_{\mathrm{sc}}^F)$ has a $\wbG^F$-conjugate $\chi_0$ such that $(\wbG^FE)_{\chi_0}=\wbG^F_{\chi_0}E_{\chi_0}$ and $\chi_0$ extends to $\GG_{\mathrm{sc}}^FE_{\chi_0}$.
	\end{enumerate}
\noindent
Let $d\geq 1$, let $\bS$ be a Sylow (i.e. maximal) $d$-torus of $\bG$ and denote $N=\NNN_{\bG}(\bS)^F$, $\w N=\NNN_{\wbG}(\bS)^F$, $\w C=\Cy_{\wbG}(\bS)^F$. 
\begin{enumerate}[ref=\thethm.$A(d)$]
	\item[A($d$)]  \label{Ad} For every $\chi\in\Irr(N)$ there is an $\w N$-conjugate $\chi_0$ which extends to $(\GG_{\mathrm{sc}}^FE)_{\bS ,\chi_0}$ and such that $O=(\wbG^F\cap O)(E\cap O)$ for $O=(\wbG^FE)_{\bS ,\chi_0}\GG_{\mathrm{sc}}^F$.
	\item[B($d$)]  \label{Bd} Maximal extendibility holds with respect to the inclusions $N\unlhd\w N$ and $\w C\unlhd \w N$. There is an  extension map $\w\Lambda$ with respect to $\w C\unlhd \w N$ which is $(\wbG^FE)_{\bS }$-equivariant and satisfies $$\w\Lambda (\restr\epsilon|{\w C}  \theta) =\restr\epsilon|{\w N_\theta} \w\Lambda (\theta)$$ for any $\theta\in \Irr (\w C)$ and $\epsilon\in\Irr (\wbG^F\mid 1_{\GG_{\mathrm{sc}}^F})$.
\end{enumerate}
\end{defi}

\begin{rem}\label{GloInf} Due to the $\GG_{\mathrm{sc}}^F$-conjugacy of Sylow $d$-tori of $\bG$ (see \cite[25.11]{MT}), the conditions \AA d and \BB d are independent of the choice of $\bS$.
	
	Note that for $d\geq 3h$ (where $h$ is the Coxeter number of the root system of $\bG$) one has $\bS =\{ 1\}$, $N=\GG_{\mathrm{sc}}^F$, $\w C=\w N=\wbG^F$. Since maximal extendibility holds with respect to the inclusion $\GG_{\mathrm{sc}}^F\unlhd\wbG^F$ by a theorem of Lusztig (see \cite[10]{L88}), it is then clear that, for large $d$, \BB d is always satisfied while \AA d is equivalent to the above \Ainfty, hence our notation.\end{rem}
	
		The condition \Ainfty\ is the so-called \emph{global condition} introduced in \cite[2.12(v)]{S12}. It is also an important step in the checking of \AA d for any $d$. We informally call here
	\emph{local condition} the conjunction of \AA d and \BB d for $d$ such that $(\bG ,F)$ has a non-trivial $d$-torus. Among the latter	\emph{local cases}, we will also call \emph{regular} the cases where Sylow $d$-tori $\bS$ satisfy that $\Cy_\bG(\bS)$ is a (maximal) torus.
	
	Note that the local conditions  \AA d and \BB d  have been checked for $d=1,2$ and all types in \cite[\S~3]{MS16}, while \Ainfty\ is established there for characters of odd degree.

\begin{thm}\label{Glo+Loc}
Assume that \AA d and \BB d from Definition~\ref{A(d)} are satisfied by $\GG_{\mathrm{sc}}^F$ for any $d\geq 1$. Then if $\GG_{\mathrm{sc}}^F/\Z (\GG_{\mathrm{sc}}^F)$ is a simple group, it satisfies the inductive McKay condition of \cite{IMN} for any prime.

 More precisely, if one assumes only \Ainfty, \AA d and \BB d for a given $d\geq 1$ and $\GG_{\mathrm{sc}}^F/\Z (\GG_{\mathrm{sc}}^F)$ is a simple group, then the latter satisfies the inductive McKay condition for any prime $\ell\nmid 2q$ such that the multiplicative order of $q$ mod $\ell$ is $d$.
\end{thm}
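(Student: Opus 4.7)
The plan is to translate the hypotheses \Ainfty, \AA d, \BB d, indexed by cyclotomic polynomials $\Phi_d$, back into hypotheses indexed by primes $\ell$, so as to apply the criterion \cite[2.12]{S12}, of which Theorem~\ref{Glo+Loc} is a streamlined reformulation, together with \cite[1.1]{S12} for the defining prime. No new idea beyond those two references is required; the work consists in matching the equivariance and extendibility requirements term by term.

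For the more precise second statement, I would fix $\ell$ with $\ell\nmid 2q$ and let $d$ be the multiplicative order of $q$ modulo $\ell$. The Brou\'e--Malle--Michel theorem \cite[25.11]{MT} asserts that every Sylow $\ell$-subgroup $P$ of $\GG_{\mathrm{sc}}^F$ lies in $N=\NNN_\bG(\bS)^F$ for some Sylow $d$-torus $\bS$ of $\bG$ and that $\NNN_{\GG_{\mathrm{sc}}^F}(P)\leq N$. Hence the refined $\Out$-equivariant local data at $\ell$ demanded by \cite{IMN} is entirely captured by the pair $(N,\w N)$ together with the action of $E$. Hypothesis \AA d then supplies the equivariant extendibility of $\Irr(N)$ under $(\wbG^FE)_\bS$ with the factorization $O=(\wbG^F\cap O)(E\cap O)$; hypothesis \BB d provides the extension maps for $N\unlhd \w N$ and $\w C\unlhd \w N$ together with the required restriction compatibility; hypothesis \Ainfty\ is the global requirement at the level of $\GG_{\mathrm{sc}}^F$. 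Feeding these into \cite[2.12]{S12} yields the inductive McKay condition at $\ell$.

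The first, all-primes statement follows by running this argument for every $\ell$: the case $\ell=p$ is handled by \cite[1.1]{S12}, whose input is essentially \Ainfty, which we recover from \AA d for $d\geq 3h$ by Remark~\ref{GloInf}; the cases $\ell=2$ and $\ell$ odd $\neq p$ fall under the general scheme above, where the relevant $d$ satisfies $\Phi_d\mid |\GG_{\mathrm{sc}}^F|_{\mathrm{polynomial}}$.

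The main technical obstacle lies in the equivariance bookkeeping. Concretely, one must check that the compact factorization $O=(\wbG^F\cap O)(E\cap O)$ in \AA d, combined with the $(\wbG^FE)_\bS$-equivariance of the extension map $\w\Lambda$ in \BB d and its compatibility with restriction from $\wbG^F$ to $\GG_{\mathrm{sc}}^F$, separately yields the $\wbG^F$- and $E$-equivariant extendibility statements demanded in \cite[2.12]{S12}. This is a diagram chase through the semi-direct decomposition $\Out(G)=\Dia(G)\rtimes E$, exploiting that $F$ extends to $\wbG$ commuting with the action of $E$, as recalled in the paragraph preceding Definition~\ref{DefE}.
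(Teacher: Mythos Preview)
Your overall strategy---reduce to \cite[2.12]{S12} with the defining prime handled by \cite[1.1]{S12}---is the same as the paper's. However, several steps are not as routine as you suggest, and one claim is wrong.

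First, your treatment of $\ell=2$ is incorrect. You write that ``the cases $\ell=2$ and $\ell$ odd $\neq p$ fall under the general scheme above'', but the more precise statement you are invoking explicitly assumes $\ell\nmid 2q$, so $\ell=2$ does \emph{not} fall under the general scheme. The paper disposes of $\ell=2$ separately by citing \cite{MS16}; without that (or an equivalent), the first statement of the theorem does not follow.

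Second, your citation for the containment $\NNN_{\GG_{\mathrm{sc}}^F}(P)\leq N$ is wrong: \cite[25.11]{MT} is the conjugacy of Sylow $d$-tori, not the statement that Sylow $\ell$-subgroups of $\GG_{\mathrm{sc}}^F$ have normalizer contained in $N$. The correct reference is \cite[5.14, 5.19]{MaH0}, and crucially that result has a finite list of exceptions which must be handled separately (the paper cites \cite[\S 3.3]{MaExt} for these). You also omit the verification that $N\neq \GG_{\mathrm{sc}}^F$ and $\bS\neq 1$, both of which are required by \cite[2.12]{S12} and are not entirely trivial; the paper gives short but genuine arguments for each.

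Third, your final paragraph describes point 2.12(iv) as ``a diagram chase'', but this is the most delicate requirement: it is the construction of an equivariant bijection $\Irr_{\ell'}(\GG_{\mathrm{sc}}^F)\to\Irr_{\ell'}(N)$ compatible with the various stabilizer and extension data. The paper does not carry this out directly but invokes \cite[\S 6]{CS17A} (via \cite[Proof of 7.3]{MS16}), whose input is precisely \BB d. Without identifying this reference, your proof has a genuine gap at the heart of the argument. Finally, the reduction when $\Z(\GG_{\mathrm{sc}}^F)$ is smaller than the Schur multiplier (handled in the paper via \cite[4.1]{ManonLie} and \cite[7.4]{CS17A}) is also missing from your sketch.
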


\begin{proof}
	 Note that the cases where $\GG_{\mathrm{sc}}^F/\Z (\GG_{\mathrm{sc}}^F)$ is a simple group but $\Z (\GG_{\mathrm{sc}}^F)$ is smaller than its Schur multiplier reduce to the same verification as in the other cases, thus dealing only with $\GG_{\mathrm{sc}}^F$ , thanks to \cite[4.1]{ManonLie} and \cite[7.4]{CS17A}. So we will check the inductive McKay condition of \cite{IMN} with $\GG_{\mathrm{sc}}^F$ acting as the universal covering of  $\GG_{\mathrm{sc}}^F/\Z (\GG_{\mathrm{sc}}^F)$. 
	 
	 The condition is empty for primes not dividing $|\GG_{\mathrm{sc}}^F|$, so we assume $\ell$ is a prime divisor of $|\GG_{\mathrm{sc}}^F|$. Thanks to \cite[1.1]{S12} and \cite{MS16} we can assume that $\ell\not= p, 2$.  Let us take $d$ the order of $q$ mod $\ell$ and $\bS$, $N$, $\w N$, etc. as in our statement. First $\bS\neq 1$. Indeed, since $\ell$ divides $|\GG_{\mathrm{sc}}^F|$, we know that some cyclotomic polynomial $\Phi_{d\ell^a}$ for $a\geq 0$ divides the order of $|\GG_{\mathrm{sc}}^F|$ seen as a polynomial in $q$. But then $\Phi_d$ itself divides this polynomial order as can be seen on tables for $|\GG_{\mathrm{sc}}^F|$ (see for instance \cite[Table 24.1]{MT}). 
	 
	 Assuming now \AA d, \BB d and \Ainfty\ from Definition~\ref{A(d)}, we prove that $\GG_{\mathrm{sc}}^F/\Z (\GG_{\mathrm{sc}}^F)$ satisfies the inductive McKay condition for $\ell$ by reviewing the requirements given in \cite[2.12]{S12}. The group theoretical requirements that $\wbG^F/\GG_{\mathrm{sc}}^F$ is abelian, $\Cy_{\wbG^FE}(\GG_{\mathrm{sc}}^F)=\Z (\w\GG^F)$ and $\wbG^FE$ induces the whole $\Aut(\GG_{\mathrm{sc}}^F)$ are known as said in \cite[3.4(a)]{S12} (see also \cite[2.5.4]{GLS3}).
	
	One has $N\not=\GG_{\mathrm{sc}}^F$. Otherwise $\bS$ is normalized by $\GG_{\mathrm{sc}}^F$, hence $\Cy_\bG(\bS)^F:=C\unlhd \GG_{\mathrm{sc}}^F$ which implies that $C\in\{ \GG_{\mathrm{sc}}^F\, ,\, \Z (\GG_{\mathrm{sc}}^F)\}$ since $\GG_{\mathrm{sc}}^F$ is quasi-simple. If $C=\GG_{\mathrm{sc}}^F$ then $\bS\leq \Z (\GG_{\mathrm{sc}})$ by \cite[6.1]{B06}, which in turn contradicts $\bS\neq 1$ since $\Z (\GG_{\mathrm{sc}})$ is finite. If $C=\Z (\GG_{\mathrm{sc}}^F)$, arguing on the Sylow $p$-subgroup of the finite reductive group $C$, it is easy to see that $\Cy_\bG(\bS) $ would be a (maximal $F$-stable) torus $\bT$ with $\ell\mid  |\Z(\GG_{\mathrm{sc}}^F)|= |\bT^F|$. Discussing along the lines of \cite[Proof of 3.6.7]{Ca85}, one finds only types $\tA_1$ and $^2\tA_2$ for $(q,\ell)=(3,2)$ and $(2,3)$ respectively, but then $\GG_{\mathrm{sc}}^F$ is solvable which is again a contradiction. 
	
	By \cite[5.14, 5.19]{MaH0}, we know that there is a Sylow $\ell$-subgroup $Q$ of ${\GG_{\mathrm{sc}}^F}$ such that $\NNN_{\GG_{\mathrm{sc}}^F}(Q)\leq N$ except if ${\GG_{\mathrm{sc}}^F}$ is among a list of exceptions for which the inductive McKay condition has been checked in \cite[\S 3.3]{MaExt}. We have seen that any element of $\wbG^FE$ acts on $\GG_{\mathrm{sc}}^F$ as the restriction of a bijective endomorphism $\sigma$ of $\bG$ defined up to powers of $F$. This explains why the group $(\wbG^FE)_{\bS ,\psi_0}$ is well-defined. For such a $\sigma$ stabilizing $N$, one has by a Frattini argument $\sigma (Q)={Q}^g$ for some $g\in N$. Applying \cite[2.5]{CS13} to $g\sigma$, this implies that $\sigma (\bS)=\bS$. So our groups $O$ and $\w N$ are indeed the ones denoted the same in \cite[2.12]{S12}. 

Maximal extendibility holds with respect to $\GG_{\mathrm{sc}}^F\unlhd \w\GG^F$ as recalled in the remark above, while it is here assumed for the inclusion $N\unlhd \w N$ through condition \BB d. This completes the proof of points 2.12(i)-(iii) of \cite{S12}. Points 2.12(v) and 2.12(vi) are clearly contained in our assumptions \Ainfty\ and \AA d. 

The remaining requirement 2.12(iv) of \cite{S12} is a consequence of \cite[\S 6]{CS17A} assuming \BB d as explained in \cite[Proof of 7.3]{MS16}.
\end{proof}

\begin{rem} Let us say how assumption \Ainfty\ in fact determines $\Irr(\GG_{\mathrm{sc}}^F)$ as an $\Out(\GG_{\mathrm{sc}}^F)$-set.
	 We keep the notations $\GG_{\mathrm{sc}}$, $\w\GG$, $E$, and we abbreviate $\cZ =\w\GG^F/\GG_{\mathrm{sc}}^F$. Then the semi-direct product $\cZ\rtimes E$ acts on $\Irr(\GG_{\mathrm{sc}}^F)$. Assume that $\GG_{\mathrm{sc}}$ satisfies the stabilizer part of the \Ainfty\ condition, that is\hfill \break\noindent (a) {\it any $\cZ$-orbit in $\Irr(\GG_{\mathrm{sc}}^F)$ has an element $\chi$ such that $(\cZ E)_\chi =\cZ_\chi E_\chi$.}

Then as a $\cZ E$-set, $\Irr(\GG_{\mathrm{sc}}^F)
$ is a disjoint union of transitive $\cZ E$-sets of the type $\cZ E/\cZ_i E_i$ for $\cZ_i\leq\cZ$, $ E_i\leq E\cap\NNN_{\cZ E}(\cZ_i)$. 

One can then determine the stabilizers $(\cZ E)_\chi =\cZ_\chi E_\chi$ from the action of $E$ and $\cZ^*:=\Irr(\wbG^F\mid 1_{\GG_{\mathrm{sc}}^F})$ on $\Irr (\wbG^F)$, which is known by \cite[3.1]{CS13}. The latter gives the term $E_\chi$ since $E_\chi =E_{\cZ.\chi} =E_{\cZ^*.\w\chi}$ where $\w\chi\in\Irr(\wbG^F\mid\chi)$. On the other hand, \cite[9.(a)]{L88} tells us that $\cZ_\chi =(\cZ^*_{\w\chi})^\perp$ where orthogonality refers here to the perfect pairing between $\cZ$ and $\cZ^*\cong \mathrm{Hom} (\cZ,\Bbb C^\times)$.
\end{rem}

\section{The global condition \Ainfty\ for types $\tB$, $\tC$, and $\tE$} \label{sec4A}
The aim of this part is to check the property \Ainfty\ from Definition~\ref{A(d)} for the types listed. This is Theorem~\ref{gloBCE} below. It clearly implies Theorem~\ref{thmB} announced in the introduction. Many statements have however a broader range, see Theorems~\ref{pr_Cla_Sh}, \ref{thm_Char_count} and \ref{GtildeExt}.
\subsection{A descent theorem for invariant classes. }

\medskip

We show in this section a descent equality for the left-hand side of Equation (\ref{Irrdelga}). This is Theorem~\ref{pr_Cla_Sh}.
We have to recall the basic notions and notations for twisted conjugacy and norm (or Shintani) maps.

 Let $H$ be a group. For $\si\in\Aut(H)$ and $h\in H$ one denotes by $h\si\in\Aut(H)$ the composite of $\si$ with conjugation by $h$, namely $(h\si)(x)=h\cdot \si(x)\cdot h^{-1}$ for $x\in H$. One denotes by $\sim_H$ the relation on $H$ of conjugacy.

More generally, one denotes by $\sim_\si$ the equivalence relation on $H$ such that for $h',h''\in H$, $h'\sim_\si h''$ means that there exists $h\in H$ with $h''=h^{-1}h'\si(h)$. Seeing $H$ as a normal subgroup of a semi-direct product $H\rtimes \spann<c>$ where $c$ acts on $H$ by $\si$, one has a bijection $h\mapsto hc$ sending $H$ to the coset $Hc$ and $\sim_\si$ is transformed into $\sim_H$ or equivalently $\sim_{H\rtimes \spann<c>}$. 

When $H$ is finite, maps $H\to \Bbb C$ that are constant on $\sim_\sigma$-classes have a well-known basis defined in terms of $\Irr (H)^{\spann<\sigma>}$. 
	The following can be found in \cite[8.14]{Isa} or \cite[1.1-3]{Shintani76}.

\begin{prop}\label{onxY} Let $Y\unlhd X$ be finite groups and $x\in X$ with $\spann<Y,x>=X$.  Let $\Lambda$ be an extension map \wrt $Y\unlhd X$ (see beginning of Sect.\,2), which exists by \cite[11.22]{Isa} since $X/Y$ is cyclic. Then $(\restr{\Lambda(\chi)}|{xY})_{\chi\in\Irr(Y)^X}$ forms 
	a basis of the space of $\sim_X$-class functions $xY\to\Bbb C$, in particular $|xY/_{\sim_X}|=|xY/_{\sim_Y}|=|\Irr(Y)^X|$. 
\end{prop}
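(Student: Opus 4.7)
The plan is to reduce the statement to classical Clifford theory combined with the Brauer permutation lemma for twisted conjugacy. First I would observe that extending any $\sim_X$-class function on $xY$ by zero to $X\setminus xY$ yields a class function on $X$; thus the space of $\sim_X$-class functions on $xY$ is realized as the subspace of class functions of $X$ supported on $xY$, and the restriction map from the full space of class functions of $X$ onto it is surjective. So the strategy is to identify precisely which $\psi\in\Irr(X)$ satisfy $\psi|_{xY}\neq 0$, to express those in terms of the extensions $\Lambda(\chi)$, and finally to exploit the freedom of tensoring by linear characters $\beta\in\Irr(X/Y)$.

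By Clifford theory any $\psi\in\Irr(X)$ is induced from $\Irr(X_\chi\mid\chi)$ for some $\chi\in\Irr(Y)$. If $\chi$ is not $X$-invariant, then $X_\chi$ is a proper subgroup of $X$ containing $Y$; since the hypothesis $X=\langle Y,x\rangle$ makes $X/Y=\langle xY\rangle$ cyclic generated by the coset $xY$, no proper subgroup of $X$ containing $Y$ meets $xY$. The induced character formula then forces $\psi|_{xY}=0$. Thus the characters of $X$ nonvanishing on $xY$ are exactly those lying over some $\chi\in\Irr(Y)^X$, which by Gallagher's theorem form the set $\{\Lambda(\chi)\beta\mid \chi\in\Irr(Y)^X,\ \beta\in\Irr(X/Y)\}$. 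Using $(\Lambda(\chi)\beta)(xy)=\beta(x)\,\Lambda(\chi)(xy)$ (because $\beta$ is trivial on $Y$), the family $\{\Lambda(\chi)|_{xY}\mid\chi\in\Irr(Y)^X\}$ spans the space of $\sim_X$-class functions on $xY$.

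Linear independence will come from Fourier inversion on the cyclic quotient $X/Y$. The identity $\mathbf{1}_{xY}=\tfrac{1}{[X:Y]}\sum_{\beta\in\Irr(X/Y)}\overline{\beta(x)}\,\beta$ valid on $X$ gives, after multiplying by $\Lambda(\chi)$, that the extension by zero of $\Lambda(\chi)|_{xY}$ equals $\tfrac{1}{[X:Y]}\sum_\beta\overline{\beta(x)}\,\Lambda(\chi)\beta$. Because the $\Lambda(\chi)\beta$ with $\chi\in\Irr(Y)^X$ and $\beta\in\Irr(X/Y)$ are pairwise distinct irreducible characters of $X$, any relation $\sum_\chi c_\chi\,\Lambda(\chi)|_{xY}=0$ translates into a linear relation among distinct irreducibles and forces $c_\chi=0$ for all $\chi$. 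This yields the basis claim and in particular $|xY/{\sim_X}|=|\Irr(Y)^X|$.

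Finally, for $|xY/{\sim_Y}|=|\Irr(Y)^X|$ I would transport the $Y$-conjugation action on $xY$ via the bijection $y\mapsto xy$ to a twisted conjugation $y\mapsto \sigma^{-1}(y_0)\,y\,y_0^{-1}$ on $Y$, where $\sigma$ denotes conjugation by $x$, and apply the Brauer permutation lemma in its twisted-conjugacy form: the number of such twisted classes on $Y$ equals $|\Irr(Y)^\sigma|=|\Irr(Y)^X|$, the last equality because inner automorphisms act trivially on $\Irr(Y)$. The main subtlety of the argument is the careful bookkeeping of these bijections and the verification that vanishing on $xY$ genuinely detects non-invariance of the underlying $\chi$; both use the hypothesis $X=\langle Y,x\rangle$ in an essential way.
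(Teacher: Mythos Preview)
Your argument is correct. The paper does not supply its own proof of this proposition but simply cites \cite[8.14]{Isa} and \cite[1.1--3]{Shintani76}; the Clifford-theoretic route you take (vanishing on $xY$ of characters induced from proper intermediate subgroups, spanning via Gallagher, linear independence via Fourier inversion on $X/Y$) is essentially the argument underlying those references.

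One minor simplification is available for the final equality $|xY/{\sim_Y}|=|xY/{\sim_X}|$. Rather than transporting to twisted conjugacy on $Y$ and invoking a separate permutation lemma, you can observe directly that for any $g\in xY$ one has $g\in C_X(g)$, and since $gY=xY$ generates the cyclic quotient $X/Y$ this forces $C_X(g)Y=X$; hence every $X$-conjugate of $g$ is already a $Y$-conjugate. This gives $\sim_X=\sim_Y$ on $xY$ immediately, and the equality with $|\Irr(Y)^X|$ then follows from the basis statement you have already established.
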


\medskip

The following norm map was first introduced by T. Shintani in \cite{Shintani76}.
 Let $\bH$ be a connected algebraic group and $F_1,F_2\colon \bH\to\bH$ be Frobenius morphisms (see \cite[\S~3]{DM}) such that $F_1F_2=F_2F_1$.
	Note that by Lang's theorem (see \cite[21.7]{MT}) every element of $\bH^{F_1}$ can be written as $y^{-1} F_2(y)$ for some $y\in\bH$.
	
\begin{theo}[{\cite[\S 1.1]{Shoji98}}]\label{not_Norm} 
The map $$\Norm_{F_1/F_2}^{(\bH)}: \bH^{F_1} /{\sim_{F_2}}\longrightarrow \bH^{F_2} /{\sim_{F_1}} \text{ given by }y^{-1} F_2(y) \mapsto F_1(y) y^{-1}$$ is well-defined and bijective. 
\end{theo}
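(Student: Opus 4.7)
The plan is to establish the three claims making up the statement: (i) the image $F_1(y)y^{-1}$ lies in $\bH^{F_2}$; (ii) its $\sim_{F_1}$-class depends only on the $\sim_{F_2}$-class of $x=y^{-1}F_2(y)$; (iii) the resulting map is bijective. Part (i) is a direct computation: applying $F_1$ to $y^{-1}F_2(y)=x$ and using $F_1(x)=x$ together with $F_1F_2=F_2F_1$ yields $F_1(y)^{-1}F_2(F_1(y))=y^{-1}F_2(y)$, which after left multiplication by $F_1(y)$ and right multiplication by $F_2(y)^{-1}$ rearranges to $F_2(F_1(y)y^{-1})=F_1(y)y^{-1}$.

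For (ii), I would exploit the torsor structure. The fiber $Y_x:=\{y\in\bH\mid y^{-1}F_2(y)=x\}$ is a principal homogeneous set for left multiplication by $\bH^{F_2}$, so any other representative is $y'=by$ with $b\in\bH^{F_2}$, and one computes $F_1(y')y'^{-1}=F_1(b)\cdot F_1(y)y^{-1}\cdot b^{-1}$. Interpreting this in the semi-direct product $\bH^{F_2}\rtimes\langle F_1\rangle$ in the spirit of Proposition~\ref{onxY}, one then recognizes that the two images lie in the same $\sim_{F_1}$-class on the coset $\bH^{F_2}F_1$. If instead $x':=h^{-1}xF_2(h)$ with $h\in\bH^{F_1}$, then $y':=yh$ satisfies $y'^{-1}F_2(y')=x'$ (note $F_2(h)\in\bH^{F_1}$ since $F_1,F_2$ commute), and using $F_1(h)=h$ one obtains $F_1(y')y'^{-1}=F_1(y)F_1(h)h^{-1}y^{-1}=F_1(y)y^{-1}$, so the image is literally unchanged.

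For (iii), the inverse map arises from the symmetric procedure: given $z\in\bH^{F_2}$, Lang's theorem applied to $F_1$ yields $y\in\bH$ with $z=F_1(y)y^{-1}$; the element $y^{-1}F_2(y)$ lies in $\bH^{F_1}$ by the same computation as in (i), and $z\mapsto y^{-1}F_2(y)$ is well-defined modulo $\sim_{F_2}$ by arguments dual to those in (ii). The two maps are mutual inverses essentially by construction, since both factor through the joint assignment $y\mapsto(y^{-1}F_2(y),F_1(y)y^{-1})$ from $\bH$. The main obstacle I anticipate is the identification in (ii): matching the natural $\bH^{F_2}$-action $z\mapsto F_1(b)zb^{-1}$ coming from the torsor with the relation $\sim_{F_1}$ defined via the twisted conjugation $z\mapsto k^{-1}zF_1(k)$. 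Reconciling these two actions is most cleanly done by moving to the semi-direct product, where both become ordinary conjugation on a single coset.
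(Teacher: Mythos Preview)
The paper does not prove this statement; it is quoted from \cite[\S 1.1]{Shoji98} and used in the sequel only through its cardinality consequence $|\bH^{F_1}/{\sim_{F_2}}|=|\bH^{F_2}/{\sim_{F_1}}|$ (see the proof of Theorem~\ref{pr_Cla_Sh}). So there is no argument in the paper to compare against, and your proposal must be judged on its own.

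Your overall architecture is the standard one, and parts (i) and (iii) are correct as written. You also rightly isolate the identification in (ii) as the crux. However, the proposed resolution does not close the gap. In the semi\-direct product $H\rtimes\langle c\rangle$ with $H=\bH^{F_2}$ and $c$ acting by $F_1$, the relation $\sim_{F_1}$ corresponds to $H$-conjugacy on the coset $Hc$ via $z\leftrightarrow zc$ (as in the discussion before Proposition~\ref{onxY}). By contrast, your torsor action $z\mapsto F_1(b)zb^{-1}$ corresponds to $H$-conjugacy on the \emph{other} coset $c^{-1}H$ via $z\leftrightarrow c^{-1}z$: indeed $b(c^{-1}z)b^{-1}=c^{-1}\,(cbc^{-1})zb^{-1}=c^{-1}F_1(b)zb^{-1}$. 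These are conjugacy on two different cosets, pulled back through two different bijections, and for an abstract automorphism $\phi$ of a finite group the resulting partitions of $H$ need not agree. A small witness: take $H=S_3$ and $\phi$ the inner automorphism by $(123)$; then the $\sim_\phi$-class of $e$ is $\{e,(123)\}$, while $\{\phi(b)b^{-1}:b\in S_3\}=\{e,(132)\}$. So ``both become ordinary conjugation on a single coset'' is not literally true, and one cannot conclude $F_1(b)zb^{-1}\sim_{F_1}z$ from the semi\-direct product alone.

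What rescues the Shintani situation is the algebraic-group input (Lang's theorem for $F_1$ on $\bH$), not just the finite group $\bH^{F_2}$ with its automorphism $F_1$; the cited reference carries this out. For the use made in the present paper this subtlety is harmless: the two twisted relations $\sim_{F_1}$ and its ``torsor'' variant have the same number of classes (both count $H$-classes on a generating coset of $H\rtimes\langle c\rangle$), and only that cardinality is invoked downstream.
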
	
%
%
%
%

%
A classical consequence of Proposition \ref{onxY} and Theorem~\ref{not_Norm} is that $$|\Irr (\bH^{F_1^m})^{F_1}|=|\Irr (\bH^{F_1})|$$ for any $m\geq 1$, see \cite[2.7]{Shintani76}. %
We need a slight variant adding a diagonal automorphism to the picture.

\begin{thm} \label{pr_Cla_Sh} Let $F_1\colon \wbG\to\wbG$ a Frobenius endomorphism of a connected algebraic group, let $F:=F_1^m$ for some integer $m\geq 1$. Let $\GG\leq \wbG$ be a closed $F_1$-stable connected subgroup such that $[\wbG ,\wbG]\leq\GG$. Let $t\in \wbG^F$. Then 
 
\begin{equation}\label{Fixpoints1}
 |\Irr (\GG^F)^{\spann<t F_1>}|=|\Irr (\GG^{F_1})^{\spann<t_1>}|
\end{equation} for some $t_1\in \wbG^{F_1}$ corresponding to $t$ by $\Norm_{F/F_1}^{(\wbG)}$.

Moreover, if $\wbG^F=\spann<\GG^F,t>$, then $\wbG^{F_1}=\spann<\GG^{F_1},t_1>$. 
\end{thm}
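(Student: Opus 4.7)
The plan is to re-express both sides of \eqref{Fixpoints1} as counts of twisted conjugacy classes via Proposition~\ref{onxY}, and then to build a direct bijection between these classes using the Shintani norm map. First I would apply Proposition~\ref{onxY} with $Y=\GG^F$ and $x=\tau:=tF_1$, inside the finite semidirect product $\GG^F\rtimes\langle\tau\rangle$ (well-defined since $t\in\wbG^F$ normalises $\GG$), to obtain
\[
|\Irr(\GG^F)^{\langle tF_1\rangle}|=|\GG^F/{\sim_\tau}|.
\]
The same proposition applied with $Y=\GG^{F_1}$ and $x=t_1$ (acting by conjugation inside $\wbG^{F_1}$) yields $|\Irr(\GG^{F_1})^{\langle t_1\rangle}|=|\GG^{F_1}/{\sim_{\iota_{t_1}}}|$. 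It therefore suffices to construct a bijection $\GG^F/{\sim_\tau}\to\GG^{F_1}/{\sim_{\iota_{t_1}}}$.

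To this end, pick $y\in\wbG$ with $t=y^{-1}F_1(y)$ by Lang--Steinberg for $F_1$ on $\wbG$, and set $t_1:=F(y)y^{-1}\in\wbG^{F_1}$; this is a representative of $\Norm^{(\wbG)}_{F/F_1}(t)$ as in Theorem~\ref{not_Norm}. The relation $F_1(y)=yt$ yields the key intertwining identity $\iota_y\circ\tau=F_1\circ\iota_y$ on $\wbG$. Given $g\in\GG^F$, the conjugate $\iota_y(g)=ygy^{-1}$ lies in $\GG$, so a second Lang--Steinberg application, this time for $F_1$ on the connected subgroup $\GG$, produces $\beta\in\GG$ with $\beta^{-1}F_1(\beta)=\iota_y(g)$. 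Setting
\[
g_1 := F(\beta)\,\iota_{t_1}(\beta)^{-1},
\]
a direct calculation (using $F_1(\beta)=\beta\cdot\iota_y(g)$, $F(\iota_y(g))=\iota_{t_1}(\iota_y(g))$, and $F_1(t_1)=t_1$) gives $F_1(g_1)=g_1$, so $g_1\in\GG^{F_1}$.

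The remaining task is to show that $g\mapsto g_1$ descends to a bijection on classes. The ambiguity of $\beta$ is left-multiplication by $\gamma\in\GG^{F_1}$, under which $g_1$ becomes $\gamma g_1\iota_{t_1}(\gamma)^{-1}$ (using $F(\gamma)=\gamma$), i.e., remains in the same $\sim_{\iota_{t_1}}$-class. If $g'=h^{-1}g\tau(h)$ with $h\in\GG^F$, then $\beta':=\beta\cdot\iota_y(h)$ solves the Lang--Steinberg equation for $g'$, and since $F(\iota_y(h))=\iota_{t_1}(\iota_y(h))$ (a consequence of $F(h)=h$) one computes $g'_1=g_1$; hence the map descends to $\sim_\tau$-classes. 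A two-sided inverse is constructed symmetrically: given $g_1\in\GG^{F_1}$, solve $F(\beta)=g_1\,\iota_{t_1}(\beta)$ in $\GG$ by Lang--Steinberg for the Steinberg endomorphism $\iota_{t_1^{-1}}\circ F$ on $\GG$ (whose fixed point group equals $\iota_y(\GG^F)$), then recover $g:=\iota_{y^{-1}}(\beta^{-1}F_1(\beta))\in\GG^F$. I expect this refinement to be the main obstacle: standard Shintani on $\wbG$ (Theorem~\ref{not_Norm}) yields bijectivity only modulo $\wbG^{F_1}$-conjugation, and it is the availability of Lang--Steinberg inside the subgroup $\GG$ (which is connected and $F_1$-stable) that sharpens this to the $\GG^{F_1}$-conjugation needed here.

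The ``moreover'' statement reduces to the abelian Shintani norm on the connected abelian algebraic group $\wbG/\GG$. Lang--Steinberg identifies $\wbG^F/\GG^F\cong(\wbG/\GG)^F$ and $\wbG^{F_1}/\GG^{F_1}\cong(\wbG/\GG)^{F_1}$, and on this abelian quotient the Shintani norm takes the explicit form $\bar a\mapsto\bar a\cdot F_1(\bar a)\cdots F_1^{m-1}(\bar a)$, a surjective group homomorphism carrying the image of $t$ to that of $t_1$. Generation of $\wbG^F/\GG^F$ by the image of $t$ therefore transfers to generation of $\wbG^{F_1}/\GG^{F_1}$ by the image of $t_1$, giving $\wbG^{F_1}=\spann<\GG^{F_1},t_1>$ from $\wbG^F=\spann<\GG^F,t>$.
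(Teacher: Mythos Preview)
Your approach is correct and shares the paper's core strategy: translate both sides to twisted conjugacy class counts via Proposition~\ref{onxY}, then match these using a Shintani-type correspondence. Where you reconstruct the class bijection by hand, the paper observes that $F$ and $tF_1$ are \emph{commuting Frobenius endomorphisms of $\GG$ itself} (since $t\in\wbG^F$ normalises $\GG$ and $F(t)=t$), so Theorem~\ref{not_Norm} applied directly to the triple $(\GG,F,tF_1)$ already gives $|\GG^F/{\sim_{tF_1}}|=|\GG^{tF_1}/{\sim_F}|$; a single conjugation by your $y$ then transports $\GG^{tF_1}$ to $\GG^{F_1}$ and $\sim_F$ to $\sim_{t_1^{-1}F}$, after which $F=F_1^m$ acts trivially on $\GG^{F_1}$. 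This packages exactly your bijection but bypasses the explicit verification of well-definedness and invertibility; the ``refinement from $\wbG$ to $\GG$'' you flag as the main obstacle disappears once one applies Shintani to $\GG$ rather than $\wbG$ from the outset. Your treatment of the ``moreover'' clause via the abelian norm on $\wbG/\GG$ matches the paper's.
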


\begin{proof}
Let us first consider the pair of commuting Frobenius endomorphisms $(F,tF_1)$ on $\GG$. Theorem~\ref{not_Norm} implies that $$|\GG^F/\sim_{tF_1}|=|\GG^{tF_1}/\sim_{F}|.$$ Working now in the semi-direct product $\w\GG\rtimes\spann<F_1>$, the considerations given at the start of the section allow to rewrite the above as $$
|\GG^F{tF_1}/\sim_{\GG^F}|=|\GG^{tF_1}F/\sim_{\GG^{tF_1}}|.$$

Lang's theorem ensures that there is $g\in \w\GG$ such that $t = g^{-1}F_1(g)$. Denote $t_1=F(g)g^{-1}\in \w\GG^{F_1}$, so that the $\sim_{F}$-class of $t_1$ is associated to $t$ by $\Norm_{F/F_1}^{(\wbG)}$. Now the interior automorphism $x\mapsto gxg^{-1}$ of $\w\GG\rtimes\spann<F_1>$ sends $\GG^{tF_1}$ to $\GG^{F_1}$ and $F$ to $t_1^{-1}F$. Applying that to the right hand side of the equation above, we deduce $$|\GG^F{tF_1}/\sim_{\GG^F}|=|\GG^{F_1}t_1^{-1}F/\sim_{\GG^{F_1}}|.$$ Using the last equation in Proposition~\ref{onxY} allows to write the above as \begin{equation}\label{Fixpoints2}|\Irr(\GG^F)^{\spann<tF_1>}|=|\Irr(\GG^{F_1})^{\spann<t_1^{-1}F>}|.\end{equation}  We know that $F=F_1^m$ acts trivially on $\GG^{F_1}$, so the right hand side above can be rewritten as $|\Irr(\GG^{F_1})^{\spann<t_1^{-1}>}|$ or equivalently $|\Irr(\GG^{F_1})^{\spann<t_1>}|$. This finishes the proof of (\ref{Fixpoints1}).

In order to check the last statement, denote $\bC =\wbG/\GG$. This is an abelian connected algebraic group where we can define $\Norm_{F/F_1}^{(\bC)}$. Since $t_1\in {\wbG^{F_1}}$ is such that its $\sim_{F}$-class is the image by $\Norm_{F/F_1}^{(\wbG)}$ of the $\sim_{F_1}$-class of $t$, the cosets mod $\GG$ satisfy  $t_1\GG =\Norm_{F/F_1}^{(\bC)}(t\GG)$ as is easily seen from the definition of Shintani norm maps.  In the abelian group $\bC$, one has $\bC^F/\sim_{F_1}=\bC^F/[\bC^F,F_1]$ while $\sim_F$ is trivial on $\bC^{F_1}$. Then $\Norm_{F/F_1}^{(\bC)}=\Norm_{F_1^m/F_1}^{(\bC)}$ is just the map $\bC^F/[\bC^F,F_1]\to \bC^{F_1}$ defined by $x\mapsto xF_1(x)\dots F_1^{m-1}(x)$ on $\bC^F$ (see also \cite[\S 1.2]{Shoji98}). This is a group morphism and it is a bijection by Theorem~\ref{not_Norm}, so the assumption that $t\GG\in \wbG/\GG$ generates $(\wbG/\GG)^F=\wbG^F/\GG^F$ implies that $t_1\GG$ generates $(\wbG/\GG)^{F_1}$.
\end{proof}

\subsection{Genericity of unipotent characters and automorphisms. }

In \cite{BMM93}, Brou\'e-Malle-Michel developed a language underlining the generic nature of unipotent characters. In the following we point out how this feature can take into account the action of automorphisms. This is a variation of the results given in \cite[\S~1B]{BMM93} and extends their proof. 

Let us recall that to each reductive group $\bH$ is associated its isomorphism type $\Gamma$ of (finite crystallographic) root system represented by a Dynkin diagram. Any Frobenius endomorphism $F\colon \bH\to \bH$ defines a unique $F_\Gamma\in \Aut(\Gamma)$, a consequence of the $\sim_\bH$-uniqueness of pairs $\bT\leq \bB$ (see \cite[\S~1A]{BMM93}, \cite[22.2]{MT}). We also deal with bijective endomorphisms $$f\colon \bH\to\bH $$ that commute with $F$. They are of the same form or algebraic automorphisms, both cases giving rise to an $$f_\Gamma \in \Aut(\Gamma)$$ commuting with $F_\Gamma$ (see \cite[2.5.5(c)]{GLS3}).

\begin{prop}\label{UGamma}
	Let $\Gamma$ be a (finite crystallographic) root system.
	Then there exist  
	a finite $\Aut(\Gamma)$-set $U(\Gamma)$  and, 
	for each reductive group $\bH$ with root system $\Gamma$ and Frobenius map $F\colon \bH\to\bH$,
	a bijection 
	$$ \tau_{\bH ,F}  \colon \cE (\bH^F,1)\xrightarrow{\ \ \sim\ \ } U(\Gamma)^{F_{\Gamma}} \eqno(1)$$ satisfying the following.
	
For any bijective algebraic morphism $f\colon \bH\to\bH$ with $f\circ F=F\circ f$ one has
 $$f_{\Gamma}(\tau_{\bH ,F}(\lambda\circ \restr{f}|{\bH^F}))=\tau_{\bH ,F}(\lambda)\text{ for any  }\ \lambda\in \cE (\bH^F,1).\eqno(2) $$
\end{prop}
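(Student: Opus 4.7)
The plan is to define $U(\Gamma)$ as the combinatorial parameter set underlying Lusztig's classification of unipotent characters and then verify that the $\Aut(\Gamma)$-action transports correctly through Lusztig's bijection for each concrete pair $(\bH,F)$.

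First, I would fix once and for all a Weyl group $W=W(\Gamma)$ and construct $U(\Gamma)$ as the disjoint union, over the families $\mathcal{F}\subseteq\Irr(W)$ in Lusztig's sense, of the Lusztig parameter sets $M(\mathcal{F})$ attached to the canonical finite group of each family. Any $\gamma\in\Aut(\Gamma)$ permutes roots and hence acts on $W$ by an automorphism preserving length, so it permutes $\Irr(W)$, stabilizes the partition into families, and induces a permutation on each $M(\mathcal{F})$ through the functoriality of Lusztig's construction. This gives the required $\Aut(\Gamma)$-set structure on $U(\Gamma)$.

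Next, for each reductive group $\bH$ with root system $\Gamma$ and Frobenius map $F$, Lusztig's theorem classifying unipotent characters (together with the fact that the parametrization depends only on the root datum up to isogeny and not on the underlying field) supplies a canonical bijection $\tau_{\bH,F}\colon\cE(\bH^F,1)\to U(\Gamma)^{F_\Gamma}$. Concretely, a unipotent character $\lambda$ lying in the Harish-Chandra series of a cuspidal pair $(\bL,\zeta)$ with relative Weyl group $W_\zeta$ is sent to the pair $(\mathcal{F},x)\in U(\Gamma)$ obtained by tracking the Lusztig label of $\lambda$ through the parametrization; this bijection is forced to take values in $U(\Gamma)^{F_\Gamma}$ because the $F$-stable unipotent characters correspond to the $F_\Gamma$-stable labels, and the normalization is pinned down by sending the trivial character to the label of the trivial representation.

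For part~(2), I would argue as follows. Given $f\colon\bH\to\bH$ bijective, algebraic, commuting with $F$, the composition $\lambda\mapsto\lambda\circ f|_{\bH^F}$ is the action of $f$ on unipotent characters of $\bH^F$. Since Lusztig's parametrization is canonical with respect to isomorphisms of root data, this action is computed combinatorially by the automorphism $f_\Gamma$ of the Weyl group (and hence of families and Lusztig parameters) induced by $f$. Thus $\tau_{\bH,F}(\lambda\circ f|_{\bH^F})=f_\Gamma^{-1}\cdot\tau_{\bH,F}(\lambda)$, which is equivalent to (2). This step uses essentially the content of \cite[1.33]{BMM93} for principal-series characters and extensions by Digne--Michel (for the general Harish--Chandra series) and by Malle and Taylor (for the delicate case of how graph and field automorphisms act on cuspidal unipotent characters).

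The hard part is precisely this last verification for cuspidal unipotent characters in exceptional types and for the triality graph automorphism of $\tD_4$, where the action is not transparent from the combinatorial construction. For those, I would appeal directly to the results of \cite{Ma17} and \cite{Taylor}, which establish that field automorphisms fix each unipotent character and that the action of any graph automorphism on cuspidal unipotent characters agrees with the permutation it induces on Lusztig labels. Once these two inputs are in hand, equivariance propagates through $d$-Harish-Chandra induction and hence through all series, completing the proof.
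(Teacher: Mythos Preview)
Your construction of $U(\Gamma)$ and the split-case argument via \cite[1.26--1.28]{BMM93} align with the paper's approach. The gap is in your handling of \emph{twisted} types, where your claim of a ``canonical'' bijection $\cE(\bH^F,1)\to U(\Gamma)^{F_\Gamma}$ is not justified.

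Concretely, Lusztig's classification for a twisted group $\bH^F$ produces a parameter set $\mathrm{Uch}(\Bbb G)$ attached to the twisted generic group $\Bbb G$ determined by $(\Gamma,F_\Gamma)$, and the paper explicitly warns that this set ``does not identify readily with $\mathrm{Uch}(\Bbb G_0)^{F_\Gamma}$''. So your sentence ``this bijection is forced to take values in $U(\Gamma)^{F_\Gamma}$ because the $F$-stable unipotent characters correspond to the $F_\Gamma$-stable labels'' is circular: it presupposes exactly the identification that must be proved. The paper instead establishes the cardinality equality $|\cE(\bH^F,1)|=|\cE(\bH^{F_0},1)^F|$ via Shoji's theorems \cite{Shoji85,Shoji87} (or the combinatorial count in Remark~\ref{CombiUni}), and then defines $\tau_{\bH,F}$ \emph{arbitrarily} in the twisted case.

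This in turn changes the verification of (2). You argue equivariance from canonicity of the bijection, but since the bijection is arbitrary in twisted cases, that route is blocked. The paper's argument is different and simpler: for twisted irreducible types, automorphisms of $\bH^F$ act \emph{trivially} on unipotent characters (\cite[p.~59]{L88}, \cite[3.7]{MaH0}), so the left side of (2) asks only that $f_\Gamma$ fix $U(\Gamma)^{F_\Gamma}$ pointwise; and any $f_\Gamma$ commuting with the nontrivial $F_\Gamma$ lies in $\langle F_\Gamma\rangle$ (since $\Aut(\Gamma)$ is cyclic or $\Sym_3$), hence acts trivially on that fixed-point set. Your appeal to \cite{Ma17} and \cite{Taylor} for the action on cuspidals is not what is needed here; the relevant input is the trivial-action fact just cited. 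Finally, you omit the reduction to $\bH_{\mathrm{ad}}$ and the (routine but necessary) passage from irreducible to general $\Gamma$ via the component decomposition, both of which the paper carries out.
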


\begin{proof} Note first that since one considers only unipotent characters, it is sufficient to define $\tau_{\bH ,F}$ in cases where $\bH =\bH_{\mathrm{ad}}$ (see \cite[13.20]{DM}). Note also that Equation (2) is just an equivariance condition, so that it is preserved by composition of endomorphisms satisfying it.
	
 One defines $U(\Gamma)$ as follows. For $\Gamma$ an isomorphism type of \emph{irreducible} root system, let $W(\Gamma)$ be the associated Weyl group, and let $\Bbb G_0$ be the adjoint generic group associated with $\Gamma$ and the trivial coset $W(\Gamma)$ inside the automorphism group of the coroot lattice in the sense of \cite[1.A]{BMM93}. Let $U(\Gamma)=\mathrm{Uch}({\Bbb G_0})$ in the sense of \cite[1.26]{BMM93} and proof. Recall that $\mathrm{Uch}({\Bbb G_0})$ is defined in terms of pairs associated to Lusztig families and relevant subgroups of $W(\Gamma)$. By \cite[1.27]{BMM93}, this is an $\Aut(\Gamma)$-set and there is a bijection $
\tau_{\bH ,F}  \colon \cE (\bH^F,1)\xrightarrow{\ \ \sim\ \ } U(\Gamma)$ satisfying (2) for all \emph{split} (${F_\Gamma}=\Id_{\Gamma}$) groups $(\bH ,F)$ by \cite[1.28]{BMM93}. The bijection is denoted by $\gamma^{\bH^F}\leftarrow\!\shortmid \gamma$ in \cite[1.26]{BMM93}. 

Let us look at cases where $\Gamma$ is still irreducible but ${F_\Gamma}$ is not necessarily $\Id_\Gamma$.
 Here we have to diverge slightly from \cite[1.B]{BMM93} since for the adjoint generic group $\Bbb G$ corresponding to $(\Gamma ,{F_\Gamma})$, the set Uch$(\Bbb G)$ does not identify readily with Uch$(\Bbb G_0)^{F_\Gamma}$. Given the equivariance already verified in the split case, we must ensure that 
 \begin{equation}\label{UniFix}
 \text{  $|\cE (\bH^F,1)|=|\cE (\bH^{F_0},1)^F|    $ }
 \end{equation} for $F_0\colon \bH\to\bH$ a split Frobenius morphism commuting with $F$. This has been proved by Shoji \cite[2.2]{Shoji85}, \cite[3.2]{Shoji87}, see also \cite[p. 24]{BMM93}. (Remark~\ref{CombiUni} below gives a combinatorial proof of this fact.) 
 
 Thanks to what has been seen about the split case above, this means $|\cE (\bH^F,1)|=|U(\Gamma)^{F_\Gamma}|$ and therefore one may define $\tau_{\bH ,F}  \colon \cE (\bH^F,1)\xrightarrow{\ \ \sim\ \ } U(\Gamma)^{F_\Gamma}$ arbitrarily when ${F_\Gamma}$ is non-trivial. To check (2) in that case, note first that automorphisms of simple groups of twisted Lie types act trivially on unipotent characters (see \cite[p. 59]{L88}, \cite[3.7]{MaH0}). So (2) simply asserts that ${f_\Gamma}$ acts trivially on $U(\Gamma)^{F_\Gamma}$ whenever ${f_\Gamma}\in\Aut(\Gamma)$ commutes with the non-trivial ${F_\Gamma}$. This is actually the case since then ${f_\Gamma}\in \spann<{F_\Gamma}>$ because $\Aut(\Gamma)$ is either dihedral of order 6 (type $\tD_4$) or of order $\leq 2$ (other types).

When $\Gamma$ is not connected, let $U(\Gamma):=\Pi_iU(\Gamma_i)$ where the $\Gamma_i$'s are the connected components of $\Gamma$. We have $\Aut(\Gamma)=\Pi_\omega \Aut(\Gamma_\omega)\wr\Sym_{|\omega|}$ where $\omega$ ranges over the isomorphism classes in $\{\Gamma_i  \}_i$ and $\Gamma_\omega$ is the corresponding type. Moreover $\bH=\bH_{\mathrm{ad}}=\Pi_i \bH_i$ a direct product along the components of $\Gamma$ and any bijective endomorphism of $\bH$ permutes the $\bH_i$'s (see for instance \cite[p. 700]{CS13}). So we can assume that there is a single $\omega$, i.e. the $\Gamma_i$'s are all isomorphic. If $F\colon \bH \to \bH$ is a Frobenius map for a reductive group of type $\Gamma$, then $\bH_{\mathrm{ad}}^F = \Pi_o \bH_{i_o}^{F^{|o|}}$ where $o$ ranges over the orbits of $F$ permuting the $i$'s and $i_o\in o$. The definition of $\tau_{\bH ,F}$ satisfying (1) then derives from the irreducible case above. Concerning (2), let us note that for any $\si\in \Aut(\Gamma)$ commuting with ${F_\Gamma}$ one defines easily an algebraic automorphism $\si_\bH\colon \bH_{\mathrm{ad}}\to \bH_{\mathrm{ad}}$ inducing $\sigma$ on $\Gamma$ and satisfying (2). Now, if $f\colon \bH_{\mathrm{ad}}\to \bH_{\mathrm{ad}}$ is as in (2), one may compose it with $\si _{\bH_{\mathrm{}}}^{-1}$ for $\si=f_\Gamma$ the element of $\Aut(\Gamma)$ induced by $f$, so that the checking of (2) now reduces to an $f$ that preserves each $\bH_i$. The irreducible case already checked then gives our claim.
\end{proof}

\medskip

\begin{rem}\label{CombiUni} It is possible to give a purely combinatorial proof of equation (\ref{UniFix}) using only the known action of automorphisms of finite groups of Lie type on unipotent characters (see \cite[3.7]{MaH0}). We use the combinatorics described in \cite[\S~13.8]{Ca85}. 
	
	For the automorphism of order 2 of type $\tD$, one has to define a rank-preserving bijection between non-degenerate symbols of defect in $4\ZZ$ on one hand, and symbols of defect in $2+4\ZZ$, the latter a parametrizing set for unipotent characters of groups $^2\tD_l(q)$, on the other hand. Such a bijection is for instance $\Lambda\mapsto \Lambda ^*$ where the symbol $\Lambda ^*$ is obtained from  
	$\Lambda= \begin{pmatrix} T'\\T''\end{pmatrix}$ by moving the biggest element of $(T'\cup T'')\setminus (T'\cap T'')$ from the set where it belongs to the other one. 
	
	For the automorphism of order 3 of $\tD_4$, there are 8 symbols of rank 4 and defect in $4\ZZ$ that are fixed under that automorphism, and this is indeed the number of unipotent characters for type $^3\tD_4$.
	
	In the case of types $^2\tA$ and $^2\tE_6$, it is well-known that unipotent characters are in bijection with the ones of the corresponding non-twisted group. 
\end{rem}

\begin{cor}\label{F_1F_2} For any pair of commuting Frobenius endomorphisms $F_1,F_2\colon \bC\to \bC$ of a reductive group $\bC$ over $\FF$, there is a bijection $$\cE (\bC^{F_1}, 1)^{F_2}\to \cE (\bC^{F_2}, 1)^{F_1}$$ which is equivariant for algebraic automorphisms $\bC\to\bC$ commuting with both $F_1$ and $F_2$.
\end{cor}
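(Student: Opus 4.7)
The plan is to deduce the corollary from Proposition~\ref{UGamma} by observing that both sides can be identified with the same set of fixed points inside $U(\Gamma)$, where $\Gamma$ is the root system of $\bC$. Since $F_1$ and $F_2$ commute, they induce commuting elements $F_{1,\Gamma},F_{2,\Gamma}\in \Aut(\Gamma)$, and one expects both $\cE (\bC^{F_1}, 1)^{F_2}$ and $\cE (\bC^{F_2}, 1)^{F_1}$ to be in natural bijection with $U(\Gamma)^{\langle F_{1,\Gamma},F_{2,\Gamma}\rangle}$.

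Concretely, first I would note that $F_2\colon \bC\to\bC$ is a bijective algebraic morphism commuting with $F_1$, and hence restricts to an automorphism of $\bC^{F_1}$ whose induced permutation of $\cE (\bC^{F_1}, 1)$ corresponds, via the bijection $\tau_{\bC,F_1}$ of Proposition~\ref{UGamma}(1) and the equivariance formula (2) applied with $f=F_2$, to the action of $F_{2,\Gamma}$ on $U(\Gamma)^{F_{1,\Gamma}}$. Taking fixed points gives the bijection
$$\tau_{\bC,F_1}\colon \cE (\bC^{F_1}, 1)^{F_2}\xrightarrow{\ \sim\ } \bigl(U(\Gamma)^{F_{1,\Gamma}}\bigr)^{F_{2,\Gamma}}=U(\Gamma)^{\langle F_{1,\Gamma},F_{2,\Gamma}\rangle}.$$
Exchanging the roles of $F_1$ and $F_2$ yields the analogous bijection for $\cE (\bC^{F_2}, 1)^{F_1}$, and composing one with the inverse of the other produces the desired bijection.

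For the equivariance statement, let $f\colon \bC\to\bC$ be a bijective algebraic morphism commuting with both $F_1$ and $F_2$. Then $f$ stabilizes both $\bC^{F_1}$ and $\bC^{F_2}$ and commutes with the action of the other Frobenius, so it acts on each of $\cE (\bC^{F_1}, 1)^{F_2}$ and $\cE (\bC^{F_2}, 1)^{F_1}$. Applying Proposition~\ref{UGamma}(2) once with $F=F_1$ and once with $F=F_2$, both actions are transported to the action of the same element $f_\Gamma\in\Aut(\Gamma)$ on $U(\Gamma)^{\langle F_{1,\Gamma},F_{2,\Gamma}\rangle}$; hence the bijection obtained above is $f$-equivariant.

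There is essentially no hard step: everything is a formal consequence of the generic description of unipotent characters from Proposition~\ref{UGamma}, used in two different ways. The only minor point to check is that the equivariance formula (2), which is stated for a single Frobenius $F$, may indeed be applied with $f$ itself equal to a second commuting Frobenius, which is immediate from the definition of $\tau_{\bC ,F}$ and the fact that $F_2$ is a bijective algebraic morphism commuting with $F_1$.
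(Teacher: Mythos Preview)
Your proposal is correct and follows essentially the same route as the paper's own proof: both arguments use Proposition~\ref{UGamma} to identify each of $\cE(\bC^{F_1},1)^{F_2}$ and $\cE(\bC^{F_2},1)^{F_1}$ with $U(\Gamma)^{\langle F_{1,\Gamma},F_{2,\Gamma}\rangle}$ via $\tau_{\bC,F_1}$ and $\tau_{\bC,F_2}$, and then compose. The equivariance argument is also the same, and your remark that condition~(2) applies with $f$ equal to the second Frobenius is exactly the point the paper uses.
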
  

\begin{proof} Let us denote by $\Gamma$ the isomorphism type of the root system of $\bC$ and by $\phi_i$ the automorphism of $\Gamma$ induced by $F_i$ for $i=1,2$. 
Proposition~\ref{UGamma} gives an $\Aut(\Gamma)$-set $U(\Gamma)$ and maps $\tau_{\bC ,F_i}\colon \cE (\bC^{F_i},1)\xrightarrow{\ \sim\ }U(\Gamma)^{\phi_i}$ satisfying the equivariance property (2) of Proposition~\ref{UGamma}. The restriction of $\tau_{\bC,F_2}$ to $\cE(\bC^{F_2},1)^{F_1}$ gives a bijection $$\tau_1\colon\cE(\bC^{F_2},1)^{F_1}\xrightarrow{\ \sim\ }U(\Gamma)^{\spann<\phi_1\, ,\,\phi_2>}$$
thanks to condition \ref{UGamma}(2) applied to $f=F_1$. The bijection $\tau_1$ also satisfies the same commutation property \ref{UGamma}(2) for automorphisms $f$ of $\bC$ commuting with both $F_1$ and $F_2$. Interchanging the roles of $F_1$ and $F_2$, we also obtain $\tau_2\colon\cE(\bC^{F_1},1)^{F_2}\xrightarrow{\ \sim\ }U(\Gamma)^{\spann<\phi_1\, ,\,\phi_2>}$. Now the bijection $\tau_1^{-1}\tau_2$ satisfies our claim. 
\end{proof}

\subsection{A descent theorem for invariant characters.} 

The following theorem deals with the right hand side of Equation (\ref{Irrdelga}). This is again a descent equality relating the number for some group $\GG^F$ to a similar number for a smaller group $\GG^{F_1}$ where $F$ is a power of $F_1$. Shintani maps again play a role through the use of Shoji's theorems in the proof of Proposition~\ref{UGamma}.

\begin{thm} \label{thm_Char_count}
Assume $\GG=\bG$ is simple of type $\tB$, $\tC$ or $\tE$. Let $\GG\leq \w\GG$ be a regular embedding. Let $F_1\colon \wbG\to\wbG$ be a Frobenius morphism defining it over $\FF_{q_1}$. Let $m\geq 1$, $q:=q_1^m$ and let $F=F_1^m$ defining $\wbG$ over $\FF_q$.
Assume $F_1$ and $ F$ coincide on $\Z(\bG)$. 

Then 
$$ | \Irr (\GG^F)^{\spann <\wbG^F ,\, F_1>}|=|\Irr(\GG^{F_1})^{\wbG^{F_1}}|.$$ 
\end{thm}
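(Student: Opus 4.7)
The plan is to reformulate both sides via Clifford theory with respect to $\GG^F\unlhd\wbG^F$ and $\GG^{F_1}\unlhd\wbG^{F_1}$, then use Lusztig's Jordan decomposition for the connected-center group $\wbG$ to express both counts in terms of semisimple classes in $\wbG^*$ and unipotent characters of their centralizers, and finally match the two sides via the Shintani bijection of Theorem~\ref{not_Norm} on the semisimple side and Corollary~\ref{F_1F_2} on the unipotent side.

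Set $\cZ:=\wbG^F/\GG^F$, $\cZ^*:=\Irr(\cZ)$, and similarly $\cZ_1,\cZ_1^*$ for $F_1$. By Lusztig's maximal extendibility for $\GG^F\unlhd\wbG^F$ (recalled in Remark~\ref{GloInf}), a character $\chi\in\Irr(\GG^F)$ is $\wbG^F$-invariant iff it extends to $\wbG^F$, the set of extensions forming a $\cZ^*$-torsor under tensoring. Hence $\wbG^F$-invariant characters of $\GG^F$ are in bijection with the free $\cZ^*$-orbits on $\Irr(\wbG^F)$; moreover $F_1$-invariance of such $\chi$ corresponds to $F_1$-stability of the $\cZ^*$-orbit of any extension. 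Thus the left hand side equals the number of free, $F_1$-stable $\cZ^*$-orbits on $\Irr(\wbG^F)$, and the right hand side equals the number of free $\cZ_1^*$-orbits on $\Irr(\wbG^{F_1})$.

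Since $\wbG$ has connected center, Jordan decomposition parameterizes $\Irr(\wbG^F)$ by pairs $((s),\psi)$ with $(s)$ an $F^*$-stable $\wbG^{*F^*}$-class of semisimple elements in $\wbG^*$ and $\psi\in\cE(\Cy_{\wbG^*}(s)^{F^*},1)$. Under Lusztig duality the $\cZ^*$-tensoring action becomes translation of $s$ inside a finite central subgroup $Z\leq\Z(\wbG^*)^{F^*}$, leaving $\psi$ unchanged, and the $F_1$-action matches $F_1^*$ on both coordinates. The LHS thus reduces to counting pairs $((s),\psi)$ with $(s)$ a $Z$-free, $F_1^*$-stable class in $\wbG^{*F^*}$ and $\psi$ an $F_1^*$-invariant unipotent character of $\Cy_{\wbG^*}(s)^{F^*}$; the RHS admits a parallel description with $F_1^*$ replacing $F^*$ and the $F_1^*$-stability/invariance conditions dropped. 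Theorem~\ref{not_Norm} applied to $\wbG^*$ yields a bijection between $F_1^*$-stable $\wbG^{*F^*}$-classes of semisimple elements and $\wbG^{*F_1^*}$-classes; for corresponding classes $(s),(s_1)$, Corollary~\ref{F_1F_2} applied to the connected reductive centralizer $\Cy_{\wbG^*}(s)$ with commuting Frobenii $F^*,F_1^*$ gives
$$|\cE(\Cy_{\wbG^*}(s)^{F^*},1)^{F_1^*}|=|\cE(\Cy_{\wbG^*}(s_1)^{F_1^*},1)|.$$
Summing the matched equalities over semisimple classes yields the claimed equality.

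The main obstacle is checking that the Shintani bijection on semisimple classes sends $Z$-free classes to $Z_1$-free classes, where $Z,Z_1$ are the finite central subgroups of $\Z(\wbG^*)$ dual (via Lusztig duality) to $\cZ,\cZ_1$. These subgroups are ultimately controlled by $\Z(\bG)$, and the hypothesis that $F_1$ and $F$ coincide on $\Z(\bG)$ is precisely what ensures the Shintani norm identifies $Z$ with $Z_1$ in a manner compatible with the central translation action. The restriction to types $\tB,\tC,\tE$ makes $\Z(\bG)$ cyclic (of order at most $3$), so $\cZ$ and $\cZ_1$ are cyclic and the bookkeeping reduces to a cyclic situation analogous to the one used in the last part of Theorem~\ref{pr_Cla_Sh}.
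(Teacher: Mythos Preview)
Your overall strategy aligns with the paper's---Clifford theory for $\GG^F\unlhd\wbG^F$, then Jordan decomposition for the connected-center group $\wbG$, then Corollary~\ref{F_1F_2}---but there is a genuine gap. You pass from ``number of free $F_1$-stable $\cZ^*$-orbits on $\Irr(\wbG^F)$'' to ``number of pairs $((s),\psi)$ with $(s)$ a $Z$-free, $F_1^*$-stable class and $\psi$ an $F_1^*$-invariant unipotent character'' without justification. A free $\cZ^*$-orbit that is merely $F_1$-stable need not contain an $F_1$-fixed element: if $\w\chi^{F_1}=\epsilon\,\w\chi$ one needs $\epsilon$ to lie in the image of $\mu\mapsto\mu\cdot(\mu^{F_1})^{-1}$ on $\cZ^*$, which is not automatic. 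The paper closes exactly this gap by showing that $\wbG^F\spann<\sigma>/\GG^F$ (with $\sigma$ acting as $F_1$) has trivial Schur multiplier, via \cite[14.1, 14.3]{IMN} and the fact that $\Z(\wbG)$ may be taken one-dimensional in the types considered; this forces every $\spann<\wbG^F,F_1>$-invariant $\chi$ to admit an $F_1$-fixed extension to $\wbG^F$, so one may instead count $F_1$-fixed elements of $\Irr'(\wbG^F):=\{\w\chi\mid\restr{\w\chi}|{\GG^F}\text{ irreducible}\}$ and divide by $|(\cZ^*)^{F_1}|=|\wbG^{F_1}/\GG^{F_1}|$. Your plan omits this step entirely, and the obstacle you flag as ``main'' is not the one that requires work.

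Two smaller corrections. The bijection on semisimple classes is not Theorem~\ref{not_Norm} but the elementary map $s\mapsto[s]_{\bH^F}$ for $s\in\bH^{F_1}_{\mathrm{ss}}$ (with $\bH=\wbG^*$), a bijection onto the $F_1$-stable $\bH^F$-classes precisely because centralizers of semisimple elements in $\bH$ are connected; Shintani descent relates twisted conjugacy classes, which is a different equivalence and would need additional translation. And the condition singling out $\Irr'(\wbG^F)$ is phrased in the paper not via central translation on $s$ but as $A(s)^F_\lambda=\{1\}$, where $A(s)$ is the component group of $\Cy_{\GG^*}(s\Z(\bH))$; the hypothesis that $F_1$ and $F$ agree on $\Z(\bG)$ gives $A(s)^F=A(s)^{F_1}$, and it is the \emph{equivariance} in Corollary~\ref{F_1F_2} (for automorphisms coming from $\Cy_{\GG^*}(s\Z(\bH))^{F_1}$), not the bare cardinality statement you quote, that transports this condition from one side to the other.
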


\begin{proof}
The image of $\w\GG^F$ in $\Out(\GG^F)$ does not depend on the choice of the regular embedding since it also coincides with the image of $\GG_{\mathrm{ad}}^F=\w\GG^F/\Z(\w\GG)^F$. As seen in Sect.\,2.A, one can assume that $\bZ:= \Z (\w\GG)$ is a torus of dimension 1.
 
 Let us form the semi-direct product $\wbG^F\rtimes\spann<\si>$ where $\si$ has order $m$ and acts by $F_1$. Let us show that  $\wbG^F \spann<\si>/\GG^F$ has trivial Schur multiplier. When $F$ is split, then $F_1$ is also split by the hypothesis on action on the center and $\wbG^F \spann<\si>/\GG^F\cong \FF_{q^{}}^\times\rtimes \spann<\si>$ has trivial Schur multiplier by \cite[14.1]{IMN}. When $F$ is not split then the type is $\tE_6$ and both $F$ and $F_1$ act on $\Z(\bG)$ by inversion. On $\wbG/\GG\cong \FF^\times$, $F_1$ acts by $t\mapsto t^{-q_1}$ and we have $-q=(-q_1)^m$. Then $\wbG^F \spann<\si>/\GG^F\cong \spann<\zeta_0>\rtimes \spann<\si>$ where $\zeta_0$ is an element of order $q+1$ in $\FF^\times$ and $\si(\zeta_0)=\zeta_0^{-q_1}$. The fixed point subgroup $\spann<\zeta_0>^{ \spann<\si>}$ is the subgroup of order $q_1+1$, generated by $\zeta_0^{q+1\over q_1+1}$. The latter is clearly $(\zeta_0\si)^m$ in $\spann<\zeta_0>\rtimes \spann<\si>$ since $-q=(-q_1)^m$. Therefore  $\spann<\zeta_0>\rtimes \spann<\si>= CB$ where $B= \spann<\zeta_0\si>$ and $C= \spann<\zeta_0>\unlhd CB$, to which we can apply \cite[14.3]{IMN} since $\Z(CB)\cap C=\spann<\zeta_0^{q+1\over q_1+1}>\leq B$. We get that $\spann<\zeta_0>\rtimes \spann<\si>$ and therefore $\wbG^F\spann < F_1>/\GG^F$ has trivial Schur multiplier in all cases. 
 
 From the above, we conclude that the characters of $\GG^F$ invariant under $(\wbG^F/\GG^F)\rtimes\spann<F_1>$ do extend to $\w\GG^F\rtimes\spann<F_1>$. Clifford theory then implies that 
$$ | \Irr (\GG^F)^{\spann <\wbG^F ,\, F_1>}|=|\Irr (\wbG^F/\GG^F)^{F_1}|^{-1}|\Irr '(\wbG^F)^{F_1}|$$ where $\Irr '(\wbG^F)$ is the subset of $\Irr (\wbG^F)$ of characters restricting irreducibly to $\GG^F$. By a trivial application of Brauer's permutation lemma \cite[6.32]{Isa} applied to the abelian group $\wbG^F/\GG^F$, we have $|\Irr (\wbG^F/\GG^F)^{F_1}|=| (\wbG^F/\GG^F)^{F_1}|$ which in turn is by Lang's theorem $|((\wbG/\GG)^F)^{F_1}|=|\wbG^{F_1}/\GG^{F_1}|$. So indeed
\begin{align}\label{eq1}
| \Irr (\GG^F)^{\spann <\wbG^F ,\, F_1>}|&=|\wbG^{F_1}/\GG^{F_1}|^{-1}|\Irr '(\wbG^F)^{F_1}|.
\end{align}
Since our claim amounts to replacing $(F_1,F)$ with $(F_1,F_1)$ while keeping our number $| \Irr (\GG^F)^{\spann <\wbG^F ,\, F_1>}|$ unchanged, it now reduces to showing that $$|\Irr '(\wbG^F)^{F_1}|=|\Irr '(\wbG^{F_1})|.$$

Denote $\bH =\wbG^*$. As in the proof of \cite[2.3]{CS17C} the characters of $\wbG^F$ are parametrized by $\bH^F$-conjugacy classes of pairs $(s,\la)$ with $s\in \bH_{\mathrm{ss}}^F$ and $\la\in \cE(\Cy_\bH(s)^F,1)$. Moreover, this parametrization is $F_1$-equivariant. Since centralizers of semi-simple elements of $\bH$ are connected by \cite[13.15(ii)]{DM}, there are well-known bijections $$
\bH_{\mathrm{ss}}^F/{\sim}_{\bH^F}\longleftrightarrow \bH_{\mathrm{ss}}^F/{\sim}_{\bH}\longleftrightarrow (\bH_{\mathrm{ss}}/{\sim}_{\bH})^F
$$ (see \cite[4.3.6]{G}). They are $F_1$-equivariant and hence $$
(\bH_{\mathrm{ss}}^F/{\sim}_{\bH^F})^{F_1}\xleftarrow{\ \sim\ } \bH_{\mathrm{ss}}^{F_1}/{\sim}_{\bH^{F_1}}$$ by the obvious map. Therefore
$F_1$-stable $\bH^F$-classes of pairs $(s,\la)$ correspond to $\bH^{F_1}$-classes of pairs $(s,\la)$ with $s\in \bH_{\mathrm{ss}}^{F_1}$ and $\la\in \cE(\Cy_\bH(s)^F,1)^{F_1}$.

Recall that by the simply-connectedness of $\GG$, $\GG^*$ is adjoint and therefore can be thought of as $\bH/\Z (\bH)$. When $s\in \bH$, one denotes by $A(s)$ the group of components of $\Cy_{\GG^*}(s\Z(\bH))$. Recall that it identifies $F_1$-equivariantly with a subgroup of $\Irr(\Z(\GG))$, see \cite[13.14(iii)]{DM}. Via the parametrization of $\Irr(\wbG^F)$ above, the subset $\Irr'(\wbG^F)$ of characters restricting irreducibly to $\GG^F$ corresponds to pairs $(s,\la)$ with $A(s)^F_\la =\{ 1\}$ (\cite[5.1]{L88}, see proof of \cite[2.3]{CS17C}). By the assumption that $F_1$ acts the same as $F$ on $\Z(\bG)$, one has $A(s)^{F}=A(s)^{F_1}$, therefore $\Irr '(\wbG^F)^{F_1}$ corresponds to the $\bH^{F_1}$-classes of pairs $(s,\la)$ with $s\in \bH_{\mathrm{ss}}^{F_1}$ and $\la\in \cE(\Cy_\bH(s)^F,1)^{F_1}$ such that $A(s)^{F_1}_\la =\{ 1\}$.

Now applying Corollary~\ref{F_1F_2} with $\bC =\Cy_{\bH} (s)$ and $(F_1,F_2)$ being the present $(F,F_1)$, for a fixed semi-simple $s\in {\bH}^{F_1}$, we get a bijection $$\cE(\Cy_{\bH} (s)^F,1)^{F_1}\ni\la\mapsto\la '\in \cE(\Cy_{\bH} (s)^{F_1},1)$$ that is equivariant for the action of $\Cy_{\GG^*} (s\Z(\bH))^{F_1}$. So it sends any $\la$ such that $A(s)^{F_1}_\la =\{ 1\}$ to
$\la '$ with $A(s)^{F_1}_{\la '} =\{ 1\}$. 

We now have the equality $$|\Irr '(\wbG^F)^{F_1}|=\sum_{s\in \bH_{\mathrm{ss}}^{F_1}}|\Cy_\bH(s)^{F_1}|^{-1}|\{\la '\in \cE(\Cy_{\bH} (s)^{F_1},1)\mid A(s)^{F_1}_{\la '} =\{ 1\}  \}|.$$
This implies $|\Irr '(\wbG^F)^{F_1}|=|\Irr '(\wbG^{F_1})^{F_1}|$ and therefore $ | \Irr (\GG^F)^{\spann <\wbG^F ,\, F_1>}|=|\Irr(\GG^{F_1})^{\wbG^{F_1}}|$ thanks to \eqref{eq1}. 
\end{proof}

%

\subsection{Graph automorphisms and invariant characters. }

We study a situation related to the extendibility condition in \Ainfty\ from Definition~\ref{A(d)}. Our main application is to groups with connected center, see Corollary~\ref{gammaF_0stab}. Again, the Shintani norm map is used in a crucial way to relate various subspaces of central functions.

\begin{thm}\label{GtildeExt} Let $\w\GG$ be a reductive group defined over a finite field with associated Frobenius map $F_0\colon \w\GG\to\w\GG$. Let $\gamma$ an algebraic automorphism of $\w\GG$ such that $\gamma^2=\Id$ and $\gamma F_0=F_0\gamma$. Let $m\geq 1$ and denote $F=F_0^m$. Assume 
	that any power $F_1=F_0^{m'}$ with $m'\mid m$ satisfies \begin{equation}\label{IrrF1ga}
	\text{  $ |\Irr(\wbG^F)^{\spann<F_1\, ,\, \gamma>}|=|\Irr(\wbG^{F_1})^{\spann<\gamma>}|.   $ }
	\end{equation}
	
	Let $E'$ be the subgroup of $\Aut(\w\GG^F)$ generated by the restrictions of $F_0$ and $\gamma$.
	Then maximal extendibility holds with respect to the inclusion $\wbG^F\unlhd \wbG^F\rtimes E'$.
\end{thm}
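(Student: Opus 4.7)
The plan is to reduce maximal extendibility to a character-count identity that can be read off from the hypothesis \eqref{IrrF1ga}. Write $H = \wbG^F$. Since $F_0$ and $\gamma$ commute with $\gamma^2 = \Id$, the image $E'$ in $\Aut(H)$ is abelian, and every subgroup $E_1 \leq E'$ is either cyclic or of the form $\langle F_1, \gamma\rangle$ with $F_1 = F_0^{m'}$ for some $m'\mid m$ and $\gamma \notin \langle F_1\rangle$ in $\Aut(H)$. Maximal extendibility for $H \unlhd H \rtimes E'$ holds if and only if, for each such $E_1$ and each $\chi \in \Irr(H)^{E_1}$, the character $\chi$ extends to $H \rtimes E_1$.

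Cyclic subgroups are handled directly by \cite[11.22]{Isa}, so the core case is bicyclic $E_1 = \langle F_1, \gamma\rangle$. Fix $\chi \in \Irr(H)^{E_1}$. The cyclic case gives an extension $\widetilde\chi \in \Irr(H\langle F_1\rangle)$; then $\widetilde\chi^\gamma = \mu\widetilde\chi$ for some linear character $\mu$ of $H\langle F_1\rangle/H$, and $\chi$ extends to $H \rtimes E_1$ if and only if one can choose $\widetilde\chi$ with $\mu = 1$. Denote by $\Irr^{\mathrm{ext}}(H)^{E_1} \subseteq \Irr(H)^{E_1}$ the subset of such characters; the goal reduces to $\Irr^{\mathrm{ext}}(H)^{E_1} = \Irr(H)^{E_1}$, and since the inclusion $\subseteq$ is automatic, it suffices to compare cardinalities.

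The hypothesis \eqref{IrrF1ga} gives $|\Irr(H)^{E_1}| = |\Irr(\wbG^{F_1})^{\langle\gamma\rangle}|$. For the other side, since $\wbG^{F_1} \unlhd \wbG^{F_1}\langle\gamma\rangle$ is a cyclic inclusion, \cite[11.22]{Isa} ensures that every $\gamma$-invariant character of $\wbG^{F_1}$ extends, yielding $|\Irr(\wbG^{F_1})^{\langle\gamma\rangle}|$ such extensions (up to twisting). Adapting the proof of Theorem~\ref{pr_Cla_Sh} to the (non-connected) algebraic group $\wbG \rtimes \langle\gamma\rangle$, on which $F$ and $F_1$ remain commuting Frobenius-type morphisms, one builds a Shintani-type bijection between $\gamma$-fixed extensions at the $\wbG^{F_1}$-level and extensions of $E_1$-invariant characters at the $\wbG^F$-level, the latter identified with $\Irr^{\mathrm{ext}}(H)^{E_1}$. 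The resulting equality $|\Irr^{\mathrm{ext}}(H)^{E_1}| = |\Irr(\wbG^{F_1})^{\langle\gamma\rangle}|$ matches the count above and forces every $E_1$-invariant character to extend.

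The main obstacle is the careful setup of the Shintani correspondence on the $\gamma$-extended side. The group $\wbG \rtimes\langle\gamma\rangle$ is not connected, so Theorem~\ref{not_Norm} does not apply verbatim; instead one works coset-by-coset, establishing that the norm map $\Norm_{F/F_1}$ interchanges the relevant $\sim_F$- and $\sim_{F_1}$-classes on the coset $\wbG \cdot \gamma$ in a manner compatible with both $\gamma$-invariance and the cyclic-extension data of $H\langle F_1\rangle \subset H\rtimes E_1$. This is the technical heart of the argument and should follow the blueprint of Theorem~\ref{pr_Cla_Sh} with this modification, using that the hypothesis \eqref{IrrF1ga} is assumed for \emph{every} divisor $m'\mid m$ so the reduction can be iterated if necessary.
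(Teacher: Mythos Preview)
Your reduction to the bicyclic case $E_1=\langle F_1,\gamma\rangle$ is correct and matches the paper. The gap is in the third paragraph: you propose to run a Shintani-type correspondence on the disconnected group $\wbG\rtimes\langle\gamma\rangle$ in order to prove $|\Irr^{\mathrm{ext}}(H)^{E_1}|=|\Irr(\wbG^{F_1})^{\langle\gamma\rangle}|$ directly. You yourself flag this as ``the main obstacle'' and leave it as a sketch. This is not a minor technicality: Lang's theorem and hence Theorem~\ref{not_Norm} require connectedness, and a coset-by-coset norm map on $\wbG\cdot\gamma$ would need an independent justification (surjectivity, well-definedness) that you do not supply. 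Moreover, the object you want to count on the $F$-side---characters that extend to $H\rtimes E_1$---is not obviously a set of class functions on a single coset, so even if a norm map on $\wbG\cdot\gamma$ existed, it is unclear what space of functions it would identify with what.

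The paper avoids this entirely. It applies Shintani descent only to the \emph{connected} group $\wbG$: the norm map $\Norm_{F/F_1}^{(\wbG)}$ yields a $\gamma$-equivariant linear isomorphism between the space $C_1$ of class functions on $\wbG^{F_1}$ and the space $C_2$ of $\wbG^F$-class functions on the coset $\wbG^F F_1$. One then computes $\Tr(\gamma,C_2)$ in two ways. Via $C_1$ and the hypothesis \eqref{IrrF1ga} it equals $|\Irr(\wbG^F)^{\langle F_1,\gamma\rangle}|=m_b+m_c$. On the other hand, the basis of $C_2$ given by Proposition~\ref{onxY} (restrictions of extensions $\Lambda(\chi)$ to the coset) decomposes into $\gamma$-stable pieces indexed by $\gamma$-orbits on $\Irr(\wbG^F)^{F_1}$, and the trace on each piece is $0$, $1$, or $-1$ according to whether $\chi^\gamma\neq\chi$, or $\chi^\gamma=\chi$ with $\Lambda(\chi)$ $\gamma$-invariant, or $\chi^\gamma=\chi$ with $\Lambda(\chi)$ not $\gamma$-invariant. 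This gives $\Tr(\gamma,C_2)=m_b-m_c$, forcing $m_c=0$. The point is that the information about $\gamma$-invariant extensions is extracted from a trace on a space built from the \emph{connected} Shintani map, rather than from a bijection on a disconnected group.
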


We will apply this through the following

\begin{cor}\label{gammaF_0stab} Let $\GG=\bG$ and a pair $\bT\leq\bB$ as in Sect.\,2.A above. We assume that there is a graph automorphism $\gamma$ of order 2  and a Frobenius morphism $F_0$ taking $\xx_\al(t)$ to $\xx_{\gamma(\al)}(t)$, resp. to $\xx_{\al}(t^p)$, for any $t\in \FF$ and $\al\in\pm\Delta$. Recall that there is a regular embedding $\GG\leq\w\GG$ where both $F_0$ and $\gamma$ extend and commute, with $F_0$ defining $\w\GG$ over $\FF_p$.
	
	Let $E'\leq\Aut(\w\GG^F)$ be as above generated by the restrictions of $F_0$ and $\gamma$.
	Then any element of $\Irr (\wbG^F)$ extends to its stabilizer in $\wbG^F\rtimes E'$.
\end{cor}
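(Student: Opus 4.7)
The plan is to deduce this corollary from Theorem~\ref{GtildeExt} by verifying its hypothesis~(\ref{IrrF1ga}) for every divisor $m'$ of $m$ with $F_1:=F_0^{m'}$, namely
$$|\Irr(\wbG^F)^{\spann<F_1\, ,\, \gamma>}|=|\Irr(\wbG^{F_1})^{\spann<\gamma>}|.$$
The other requirements of Theorem~\ref{GtildeExt} (that $\gamma$ is an involutive algebraic automorphism of $\wbG$ commuting with $F_0$, and that $E'$ is generated by the restrictions of $F_0$ and $\gamma$) are built into the setup of the corollary, since the regular embedding has been constructed precisely so that both $F_0$ and $\gamma$ extend and commute on $\wbG$.

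To establish the displayed equality, I would reprise the argument of the proof of Theorem~\ref{thm_Char_count}, but now tracking invariance under the graph involution $\gamma$ in place of invariance under $\wbG^F$. Since $\Z(\wbG)$ is connected, the dual group $\bH:=\wbG^*$ has simply-connected derived subgroup, so centralizers $\Cy_\bH(s)$ of semisimple $s\in\bH$ are connected. Lusztig's parametrization then identifies $\Irr(\wbG^F)$ with the set of $\bH^F$-orbits of pairs $(s,\la)$ with $s\in\bH^F_{\mathrm{ss}}$ and $\la\in\cE(\Cy_\bH(s)^F,1)$, and this identification is equivariant for the actions of $F_1$ and of the dual graph automorphism $\gamma^*\in\Aut(\bH)$ induced by $\gamma$, which commute. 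Applying Lang--Steinberg first in $\bH$ and then in $\Cy_\bH(s)$ (legitimate by the connectedness just noted), every $\spann<F_1,\gamma^*>$-fixed $\bH^F$-orbit has a representative $(s,\la)$ with $s\in\bH^{F_1}_{\mathrm{ss}}$, $\gamma^*(s)=s$ and $\la\in\cE(\Cy_\bH(s)^F,1)^{\spann<F_1,\gamma^*>}$, unique up to $\bH^{\spann<F_1,\gamma^*>}$-conjugacy.

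For such an $s$, Corollary~\ref{F_1F_2} applied to $\bC=\Cy_\bH(s)$ with the pair $(F,F_1)$ produces a bijection
$$\cE(\Cy_\bH(s)^F,1)^{F_1}\xrightarrow{\ \sim\ }\cE(\Cy_\bH(s)^{F_1},1)$$
equivariant for every algebraic automorphism of $\bC$ commuting with $F$ and $F_1$. The restriction of $\gamma^*$ to $\bC$ is such an automorphism, since $\gamma^*(s)=s$ and $\gamma^*$ commutes with $F$ and $F_1$ on $\bH$, so taking $\gamma^*$-fixed points on both sides yields an equality of cardinalities term by term. Summing over the orbits of semisimple classes then recovers~(\ref{IrrF1ga}); the component groups $A(s)$ play no role here because they are trivial under our connected-center assumption.

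The step I expect to be the main obstacle is the orbit-stabilizer bookkeeping. The bijection provided by Corollary~\ref{F_1F_2} is only known to exist as a $\gamma^*$-equivariant cardinality match rather than a canonical map, so some care is required to convert the pointwise equality at each fixed $s$ into the global equality of $\spann<F_1,\gamma^*>$-fixed $\bH^F$-orbits of pairs on one side and $\spann<\gamma^*>$-fixed $\bH^{F_1}$-orbits of pairs on the other. The analogous bookkeeping carried out in the proof of Theorem~\ref{thm_Char_count} with respect to the action of $\wbG^F$ should transpose essentially verbatim to the action of $\gamma$, once one has checked that the $\gamma^*$-stabilizers of semisimple classes in $\bH^F$ and $\bH^{F_1}$ correspond under the Shintani-type identification $(\bH^F_{\mathrm{ss}}/{\sim}_{\bH^F})^{F_1}\leftrightarrow\bH^{F_1}_{\mathrm{ss}}/{\sim}_{\bH^{F_1}}$ already used in that proof.
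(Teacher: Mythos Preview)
Your overall strategy is the same as the paper's: reduce to Theorem~\ref{GtildeExt} by verifying (\ref{IrrF1ga}) via the Jordan decomposition in $\wbG$ and Corollary~\ref{F_1F_2}. However, there is a genuine gap in the execution.

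The problematic step is the claim that every $\spann<F_1,\gamma^*>$-fixed $\bH^F$-orbit of pairs has a representative $(s,\la)$ with $\gamma^*(s)=s$, obtained by ``applying Lang--Steinberg first in $\bH$ and then in $\Cy_\bH(s)$''. Lang's theorem applies to Frobenius endomorphisms, not to finite-order algebraic automorphisms such as $\gamma^*$. Once you have chosen $s\in\bH^{F_1}$, the $\gamma^*$-stability of its $\bH^{F_1}$-class does not in general produce a $\gamma^*$-fixed element in that class: this is an $H^1(\spann<\gamma^*>,\bH^{F_1}/\Cy_{\bH^{F_1}}(s))$ obstruction with no reason to vanish. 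The companion uniqueness claim ``up to $\bH^{\spann<F_1,\gamma^*>}$-conjugacy'' fails for the same reason. This is not merely bookkeeping; it is the point where your direct count breaks down.

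The paper avoids this entirely by not restricting to $\gamma^*$-fixed $s$. Instead it assembles, from Corollary~\ref{F_1F_2} applied at \emph{every} $s\in\bH^{F_1}_{\mathrm{ss}}$, a single bijection
\[
\Irr(\wbG^F)^{F_1}\ \longleftrightarrow\ \Irr(\wbG^{F_1})
\]
that is $\gamma$-equivariant, and then takes $\gamma$-fixed points on both sides. The reason this assembly is $\gamma^*$-compatible is that the bijections of Corollary~\ref{F_1F_2}, built from the maps $\tau_{\bC,F}$ of Proposition~\ref{UGamma}, depend only on the root-system data $(\Gamma,F_\Gamma)$; hence an isomorphism $\gamma^*\colon\Cy_\bH(s)\to\Cy_\bH(\gamma^*(s))$ intertwines the bijection for $s$ with the one for $\gamma^*(s)$, even when $\gamma^*(s)\neq s$. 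Rewriting your argument in this form (drop the search for $\gamma^*$-fixed representatives, keep the $\bH^{F_1}$-indexing, and invoke the naturality of Corollary~\ref{F_1F_2} across the whole family) gives a correct proof identical to the paper's.
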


\begin{proof} Assuming Theorem~\ref{GtildeExt}, it clearly suffices to check that assumption (\ref{IrrF1ga}) is satisfied. Denote $\bH=\wbG^*$ with regard to a maximal torus $\bT^*$. Then arguing as in the proof of Theorem~\ref{thm_Char_count}, the Jordan decomposition of characters in the group $\wbG$ with connected center gives a bijection $$\Irr (\wbG^F)^{F_1}\longleftrightarrow (\coprod_{s\in \bH^{F_1}_{\text {ss}}}\cE(\Cy_\bH(s)^F,1)^{F_1})/{\sim_{\bH^{F_1}}}.$$ Thanks to \cite[3.1]{CS13} this can be assumed to be $\gamma$, $\gamma^*$-equivariant where $\gamma^*\in\Aut(\bH)$ is defined dually to $\gamma$. In the case $(F,F_1)=(F_1,F_1)$, we get a bijection
	$$\Irr (\wbG^{F_1})\longleftrightarrow (\coprod_{s\in \bH^{F_1}_{\text {ss}}}\cE(\Cy_\bH(s)^{F_1},1)/{\sim_{\bH^{F_1}}}.$$ The family of maps $$\Big(\cE(\Cy_\bH(s)^F,1)^{F_1}\lra \cE(\Cy_\bH(s)^{F_1},1)\Big)_{s\in \bH^{F_1}_{\text {ss}}}$$ provided by Corollary~\ref{F_1F_2} commutes with the $\bH^{F_1}$ and $\gamma^*$ actions. So we get a bijection $$\Irr (\wbG^F)^{F_1}\longleftrightarrow \Irr (\wbG^{F_1})$$ that is $\gamma$-equivariant. This clearly implies our claim. We then get the statement by applying Theorem~\ref{GtildeExt}.
\end{proof}

\medskip\noindent{\it Proof of Theorem~\ref{GtildeExt}.} Let $\chi\in\Irr(\w\GG^F)$.
  If $\gamma\not\in E'_\chi$ , then $E'_\chi$ is cyclic and $\chi$ extends to $\wbG^{F}E'_\chi$ by \cite[11.22]{Isa}. We now assume $\gamma\in E'_\chi$ and therefore $E'_\chi =\spann<F_0>_\chi\times \spann<\gamma>$. It is easy to see that the subgroups of $\spann<F_0>$ in $\Aut(\w\GG^F)$ are all of the form $\spann<F_0^{m'}>$ for $m'\mid m$, the latter being the unique subgroup of $\spann<F_0>$ with order ${m\over m'}$. So it suffices to show that, for $F_1$ as in (\ref{IrrF1ga}), any element of $\Irr(\wbG ^F)^{\spann<F_1\, ,\,\gamma>}$ extends to $\wbG ^F\rtimes{\spann<F_1\, ,\,\gamma>}$.
	 
	Let us consider the space $C_1={\Bbb C}\Irr(\wbG^{F_1}) $ of central functions on $\wbG^{F_1}$. Note that $\gamma$ is an automorphism of $\wbG^{F_1}$, hence acts on $C_1$. Since $\gamma$ permutes the basis $\Irr (\wbG^{F_1})$ we have $\Tr(\gamma ,C_1)=|\Irr(\wbG^{F_1})^{\spann<\gamma>} |$ and by (\ref{IrrF1ga})
	 \begin{equation}\label{Trga}
	 \text{  $  \Tr(\gamma ,C_1)=|\Irr(\wbG^F)^{\spann<F_1\, ,\,\gamma>}|.  $ }
	 \end{equation}
	Let us form the semi-direct product $\wbG^{F}\rtimes\spann<F_1>$. 	We fix an extension map \begin{align*}\Lambda\ \colon\ \Irr(\wbG ^F)^{F_1}&\ \longrightarrow\ \Irr(\wbG^{F}\rtimes\spann<F_1>)\cr \chi&\ \longmapsto\Lambda(\chi ) ,\end{align*} giving an extension to each $F_1$-invariant character of $\wbG^F$, which is always possible since $\wbG^{F}\rtimes\spann<F_1>/\wbG^{F}$ is cyclic \cite[11.22]{Isa}.
	
	The Shintani norm map $\Norm_{F/F_1}^{(\w\GG)}\colon \wbG^F /\sim_{F_1}\to {\wbG^{F_1}}/\sim_{F}$ can be composed with multiplication by $F_1$ on the domain. This gives a bijection  $\Norm '\colon \wbG^F{F_1}/{\sim_{\wbG^F}}\to {\wbG^{F_1}}/\sim_{\wbG^{F_1}}$ as explained in Sect.\,\ref{sec4A}.A. The map $\Norm '$ is defined by $y^{-1}F_1(y)F_1\mapsto F(y)y^{-1}$ for $y\in\w\GG$ with $y^{-1}F_1(y)\in\wbG^F$. Composition with $\Norm '$ induces a $\Bbb C$-linear isomorphism $$C_2\xrightarrow{\ \sim\ } C_1$$ where $C_2$ is the space of ${\sim_{\wbG^F}}$-class functions on ${{\wbG^F}}F_1$. This isomorphism commutes with the action of $\gamma$ as can be seen on the definition of $\Norm_{}'$.

	By Proposition~\ref{onxY}, a basis of $C_2$ is given by the family $\{  \restr{\Lambda(\chi)}|{\wbG^F F_1} \mid \chi\in\Irr(\wbG^F)^{F_1}\}$. For $\{\chi ,\chi^\gamma\}$ an orbit of $\gamma$ on $\Irr(\wbG^F)^{F_1}$, denote $$V_{\{\chi ,\chi^\gamma\} }:={\Bbb C}\restr\Lambda(\chi)|{\wbG^F F_1}+{\Bbb C}\restr\Lambda(\chi^\gamma)|{\wbG^F F_1}\subseteq C_2.$$ They form a decomposition of $C_2$ as a direct sum with each $V_{\{\chi ,\chi^\gamma\} }$ of dimension $|{\{\chi ,\chi^\gamma\} }|$. 
	
	Each $V_{\{\chi ,\chi^\gamma\} }$ is $\gamma$-stable. Indeed, for any $\chi\in\Irr(\wbG^F)$, $\Lambda(\chi)^\gamma$ being another extension of $\chi^\gamma$ is of the form $\Lambda(\chi^\gamma)\epsilon$ where $\epsilon$ is a linear character of $\wbG^{F}\rtimes\spann<F_1>/\wbG^{F}$ and consequently $\Big(\restr\Lambda(\chi)|{\wbG^F F_1}\Big)^\gamma =\restr\Lambda(\chi)^\gamma|{\wbG^F F_1}=\epsilon(F_1)\restr\Lambda(\chi^\gamma)|{\wbG^F F_1}\in \CC \restr\Lambda(\chi^\gamma)|{\wbG^F F_1}$. 
	
	The action of $\gamma$ on $C_2$ with regard to this decomposition is as follows.  \begin{enumerate}
		\item If $\chi^\gamma\neq \chi$, then $\restr\Lambda(\chi)|{\wbG^F F_1}$ and $\restr\Lambda(\chi^\gamma)|{\wbG^F F_1}$ are linearly independent by Proposition~\ref{onxY}. Moreover we have just seen that $(\restr\Lambda(\chi)|{\wbG^F F_1})^\gamma $ and $\restr\Lambda(\chi^\gamma)|{\wbG^F F_1}$ generate the same line, so we can take $(\restr\Lambda(\chi)|{\wbG^F F_1})^\gamma $ and $\restr\Lambda(\chi)|{\wbG^F F_1}$ as basis of $V_{\{\chi ,\chi^\gamma\} }$. Then it is clear that $\gamma$ has trace 0 on $V_{\{\chi ,\chi^\gamma\} }$. 
		\item	If $\chi^\gamma=\chi$ and $\Lambda(\chi)^\gamma=\Lambda(\chi)$, then $V_{\{\chi \} }$ is a line where the action of $\gamma$ has trace 1.
		\item If $\chi^\gamma=\chi$ and $\Lambda(\chi)^\gamma\neq\Lambda(\chi)$, then the endomorphism induced by $\gamma$ on the line $V_{\{\chi \} }$ is $-1$, since of order 2. There the trace is $-1$.
	\end{enumerate}
	This implies that $\Tr(\gamma ,C_2)=m_b-m_c$
	where $m_b$ is the number of $\chi\in\Irr(\wbG ^F)^{\spann<F_1\, ,\,\gamma>}$ with $\gamma$-invariant extensions  to $\spann<\wbG ^F,\,F_1>$ and $m_c$ is the number of $\chi\in\Irr(\wbG ^F)^{\spann<F_1\, ,\,\gamma>}$ that do not have any $\gamma$-invariant extension (note that an element of $\Irr(\wbG ^F)^{\spann<F_1\, ,\,\gamma>}$ has no extension to $\spann<\wbG ^F,F_1>$ that is $\gamma$-invariant or all extensions are).

	On the other hand $$\Tr(\gamma ,C_2)=\Tr(\gamma ,C_1)=|\Irr(\wbG ^F)^{\spann<F_1\, ,\,\gamma>}|=m_b+m_c$$ by (\ref{Trga}) above. This forces $m_c=0$, and therefore any element of $\Irr(\wbG ^F)^{\spann<F_1\, ,\,\gamma>}$ extends to $\wbG ^F\rtimes{\spann<F_1\, ,\,\gamma>}$ as required. \qed

\subsection{Stabilizers and extendibility of characters in types $\tB$, $\tC$ and $\tE$}

We now combine Theorem~\ref{pr_Cla_Sh} and Theorem~\ref{thm_Char_count} to obtain property \Ainfty\ of stabilizers in the outer automorphism groups introduced in  Definition~\ref{A(d)}. This clearly implies Theorem B from our Introduction. Type $\tC$, already known from \cite{CS17C}, is not included in the statement below but the same proof applies. To deal with type $\tE_6$ an additional extendibility property is required which will use Corollary~\ref{gammaF_0stab} above.

\begin{thm}\label{gloBCE} {}
Assumption \Ainfty\ of Definition~\ref{A(d)} is satisfied by any $\bG$ of type $\tB$, $\tE_6$ or $\tE_7$ and any Frobenius endomorphism. 
\end{thm}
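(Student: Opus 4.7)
The plan is to dispatch types $\tB$, $\tE_7$ and $^2\tE_6$ uniformly (the outer automorphism group is abelian and consists only of diagonal and field automorphisms), and then treat type $\tE_6$ separately to absorb the graph automorphism $\gamma$. In the first case, since $\Out(\GG_{\mathrm{sc}}^F)=\Dia(\GG_{\mathrm{sc}}^F)E$ is abelian, the stabilizer condition $(\wbG^FE)_{\chi_0}=\wbG^F_{\chi_0}E_{\chi_0}$ for some $\wbG^F$-conjugate $\chi_0$ is equivalent, by a standard Brauer-type counting lemma applied to each $\wbG^F$-orbit, to the descent equality
$$|\Irr(\GG_{\mathrm{sc}}^F)^{\spann<\delta\sigma>}|=|\Irr(\GG_{\mathrm{sc}}^F)^{\spann<\delta,\sigma>}|$$
for every $\delta\in\Dia(\GG_{\mathrm{sc}}^F)$ and every $\sigma\in E$, cf.~(\ref{Irrdelga}).

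The core of the argument combines the two descent theorems. Given $\sigma=\restr{F_0^{m'}}|{\GG_{\mathrm{sc}}^F}$ with $m'\mid m$, set $F_1:=F_0^{m'}$, so $F=F_1^{m/m'}$, and choose a lift $t\in\wbG^F$ of $\delta$. Theorem~\ref{pr_Cla_Sh} applied to the pair $(F_1,F)$ with this $t$ produces $t_1\in\wbG^{F_1}$ via the Shintani norm and gives
$$|\Irr(\GG_{\mathrm{sc}}^F)^{\spann<tF_1>}|=|\Irr(\GG_{\mathrm{sc}}^{F_1})^{\spann<t_1>}|,$$
together with the key functorial property that if $t$ generates $\wbG^F/\GG_{\mathrm{sc}}^F$ then $t_1$ generates $\wbG^{F_1}/\GG_{\mathrm{sc}}^{F_1}$. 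Since in types $\tB$, $\tE_7$ and $^2\tE_6$ the diagonal automorphism group is cyclic, we may assume $\delta$ generates $\Dia$, whence
$|\Irr(\GG_{\mathrm{sc}}^{F_1})^{\spann<t_1>}|=|\Irr(\GG_{\mathrm{sc}}^{F_1})^{\wbG^{F_1}}|.$
Theorem~\ref{thm_Char_count} (whose hypothesis that $F_1$ and $F$ coincide on $\Z(\bG)$ is easily verified since they act by the same sign there) identifies the right-hand side with $|\Irr(\GG_{\mathrm{sc}}^F)^{\spann<\wbG^F,\,F_1>}|=|\Irr(\GG_{\mathrm{sc}}^F)^{\spann<\delta,\sigma>}|$, closing the loop.

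For the extendibility clause of \Ainfty, in types $\tB$, $\tE_7$, $^2\tE_6$ the group $E$ is cyclic, hence $E_{\chi_0}$ is cyclic and \cite[11.22]{Isa} guarantees that $\chi_0$ extends to $\GG_{\mathrm{sc}}^FE_{\chi_0}$. For type $\tE_6$ with nontrivial graph automorphism $\gamma$, the stabilizer $E_{\chi_0}$ can fail to be cyclic only when $\gamma$ (times a power of $F_0$) lies in it; in that case we invoke Corollary~\ref{gammaF_0stab}, which supplies maximal extendibility with respect to $\wbG^F\unlhd\wbG^F\rtimes E'$ where $E'=\spann<F_0,\gamma>$. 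An application of Clifford theory, together with the fact that $\wbG^F\rtimes E'/\GG_{\mathrm{sc}}^F$ has trivial Schur multiplier (as verified in the proof of Theorem~\ref{thm_Char_count}), then transports the extension down to $\GG_{\mathrm{sc}}^FE_{\chi_0}$. The stabilizer equality in the $\tE_6$ case is again of the shape~(\ref{Irrdelga}) but with $\sigma$ of the form $\gamma F_0^{m'}$; the same sequence of descents applies once Corollary~\ref{F_1F_2} is used to transport $\gamma$-invariance through the Shintani isomorphism.

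I expect the main obstacle to be type $\tE_6$, specifically characters whose $\Out$-stabilizer contains both a nontrivial diagonal and the graph automorphism~$\gamma$: there one must simultaneously track the $\Dia$-orbit (so as to pick the right $\wbG^F$-conjugate $\chi_0$), the $\gamma$-invariance (preserved through the Shintani norm by Corollary~\ref{F_1F_2}), and the extendibility across $\gamma$ (which is where Corollary~\ref{gammaF_0stab} is essential). All three ingredients must be made compatible on the same representative $\chi_0$, and this compatibility is what forces the proof to handle $\tE_6$ via an extra Clifford-theoretic step rather than by the short counting argument available for the other types.
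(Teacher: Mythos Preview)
Your treatment of types $\tB$, $\tE_7$ and $^2\tE_6$ is essentially the paper's proof: the outer automorphism group is abelian, the stabilizer statement reduces to the counting equality~(\ref{Irrdelga}), and the chain Theorem~\ref{pr_Cla_Sh} $\to$ Theorem~\ref{thm_Char_count} closes it. The extendibility there is immediate since $E$ is cyclic.

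For type $\tE_6$ your sketch has two genuine gaps.

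\textbf{Stabilizer part.} Here $\Out(\GG_{\mathrm{sc}}^F)=Z\rtimes E$ is \emph{not} abelian: $\gamma$ inverts the order-$3$ group $Z$. So the equivalence with a counting equality of the shape~(\ref{Irrdelga}) does not hold directly, and your proposal to run the descent with $\sigma=\gamma F_0^{m'}$ is both unnecessary and not straightforward (Theorem~\ref{thm_Char_count} needs $F_1$ and $F$ to agree on $\Z(\bG)$, which fails for such $\sigma$). The paper instead invokes Lemma~\ref{CDr} with $r=3$: it suffices to check the stabilizer structure inside the abelian Sylow $3$-subgroup $Z\times E_3$. Since $|\gamma|=2$, the graph automorphism simply does not appear in $E_3$, and the descent argument for $e\in E_3$ runs exactly as in the other types (with $F_1=F_0^{m_{3'}a}$ so that the centre condition holds). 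You are missing this reduction.

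\textbf{Extendibility part.} Your claim that $\wbG^F\rtimes E'/\GG_{\mathrm{sc}}^F$ has trivial Schur multiplier ``as verified in the proof of Theorem~\ref{thm_Char_count}'' is not established there: that proof only treats the quotient by a \emph{cyclic} $\spann<\sigma>$, not by the full $E'=\spann<F_0,\gamma>$. The paper's argument is different and more concrete. One first reduces to the Sylow $2$-subgroup $\spann<F_1,\gamma>$ of $E_\chi$, observes that $\gamma$-invariance forces $\Z(G)\leq\ker(\chi)$ (the only $\gamma$-fixed central character is trivial), and then splits into two cases. If $\chi$ is $\wG$-invariant, the three extensions of the inflated $\wh\chi$ to $\wG$ are permuted by the $2$-group $\spann<F_1,\gamma>$, so one is fixed; Corollary~\ref{gammaF_0stab} extends that one to $\wG\spann<F_1,\gamma>$, and restriction gives the required extension of $\chi$. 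If $\chi$ is not $\wG$-invariant, one induces $\wh\chi$ to $\wG$, applies Corollary~\ref{gammaF_0stab} to the (unique, hence $\spann<F_1,\gamma>$-stable) induced character, and Clifford-corresponds back; the degree count shows the correspondent is an extension. Finally \cite[11.31]{Isa} passes from the Sylow $2$-subgroup to all of $E_\chi$. Your one-line ``transport via Clifford theory'' does not capture this, and in particular misses the crucial use of $\Z(G)\leq\ker(\chi)$ that makes the counting of extensions to $\wG$ work.
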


The following lemma helps reducing the stabilizer part of \Ainfty\ to an abelian 2 or 3-group of outer automorphisms.

\begin{lem}\label{CDr} Let $C\rtimes D$ be a semi-direct product of finite groups where $|C|=r\in\{2,3\}$ and $D$ is abelian. Then a subgroup $X\leq CD$ is $C$-conjugate to some $C'D'$ with $C'\leq C$ and $D'\leq D$ if $X_r$ has the same property in the abelian group $(CD)_r=C\times D_r$.
\end{lem}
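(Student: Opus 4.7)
The plan is to exploit the fact that $|\Aut(C)|=r-1$ is coprime to $r$, so the action of $D$ on $C$ kills its Sylow $r$-subgroup: $D_r$ centralizes $C$. This gives the identification $(CD)_r=C\times D_r$ already noted in the statement, and since $C$ is itself abelian and centralizes $D_r$, conjugation by $C$ is trivial on this Sylow subgroup. Thus the hypothesis simplifies to $X_r=C_0\times D_0$ as actual (not merely conjugate) subgroups, for some $C_0\leq C$ and $D_0\leq D_r$. The proof then splits on the two possible values $C_0\in\{1,C\}$, according as $X$ contains $C$ or avoids it entirely.

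If $C_0=C$, then $C\leq X_r\leq X$, and writing $\pi\colon CD\to CD/C\cong D$ for the canonical projection, one has $X=C\cdot\pi(X)$ with $\pi(X)\leq D$; this gives the required form with no conjugation needed.

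The substantive case is $C_0=1$, where $X_r=D_0\leq D$ so that $X\cap C\leq X_r\cap C=1$. Setting $Y\deq XC$, an order count shows $Y=C\rtimes\pi(X)$ and exhibits both $X$ and $\pi(X)$ as complements to $C$ in $Y$; the goal becomes to show them $C$-conjugate. The key observation is that $X_r=\pi(X)_r=D_0$ as subgroups of $Y$, and this common subgroup is central in $Y$: it commutes with $C$ by the initial remark on $D_r$, and with $\pi(X)\leq D$ because $D$ is abelian. One therefore quotients by $X_r$, obtaining $Y/X_r\cong C\rtimes\pi(X)_{r'}$ where the orders of the factors, $r$ and $|\pi(X)_{r'}|$, are coprime. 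Schur--Zassenhaus (equivalently the vanishing of $H^1(\pi(X)_{r'},C)$) then provides a $C$-conjugation making $X/X_r$ and $\pi(X)/X_r$ coincide in $Y/X_r$; any realizing element $c\in C$ lifts trivially and sends $X$ to $\pi(X)$, since both $X^c$ and $\pi(X)$ contain the central subgroup $X_r$.

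The main obstacle, and the reason the case $C_0=1$ is not entirely formal, is that $X$ can have a nontrivial $r$-part inherited from $D_r$, so standard Schur--Zassenhaus is not directly applicable to the complements $X$ and $\pi(X)$ inside $Y$. The idea of factoring out the central piece $X_r$ is precisely what brings the coprime-order hypothesis into play after the quotient. Everything else is bookkeeping with the projection $\pi$ and the abelianness of $D$.
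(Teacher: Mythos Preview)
Your proof is correct but takes a genuinely different route from the paper's. The paper disposes of $r=2$ immediately (since then $\Aut(C)=1$ and $CD$ is abelian), and for $r=3$ first reduces to $D=D_3\times D_2$, then writes $X=X_3X'$ with $X'$ a Sylow $2$-subgroup, uses Sylow's theorem to find $c\in C$ with ${}^cX'\leq D_2$, and observes that this conjugation fixes $X_3$ because $CD_3$ is abelian. Your argument instead treats both values of $r$ uniformly, avoids the reduction on $D$, and replaces the Sylow step by Schur--Zassenhaus applied after quotienting out the central subgroup $X_r$. The paper's version is shorter and stays within Sylow theory; yours is more structural, makes explicit why the $r$-part of $X$ is the only obstruction, and would generalize more readily if one ever needed $|C|$ to be an arbitrary prime.
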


\begin{proof} The case $r=2$ is trivial since then $CD$ is abelian. Assume $r=3$. Since $\Aut(C)$ has order 2, one easily reduces to the case where $D=D_3\times D_2$. By assumption we can write $X_3=C'D'$ with $C'\leq C$, $D'\leq D_3$. Now a Sylow 2-subgroup $X'$ of $X$ satisfies $X=X_3X'$ while $^cX'\leq D_2$ for some $c\in CD$. One may assume $c\in C$ and we indeed get $^cX={}^cX_3.{}^cX'=C'(D'.{}^cX')$ since $CD_3$ is abelian. \end{proof}

\medskip\noindent{\it Proof of Theorem~\ref{gloBCE}}. In the types considered $\Z(\bG)$ is cyclic, so there exists a regular embedding $\bG\leq \wbG=\bG\Z(\wbG)$ such that $\Z(\wbG)$ is a torus of rank 1 (see Sect.\,2.A). We abbreviate $\GG:=\bG$. Continuing with the setting and notations of Sect.\,2, one considers a Frobenius endomorphism $F_0\colon \wbG\to\wbG$ defining $\wbG$ over $\FF_p$ and sending $\xx_{\al}(t)$ to $\xx_{\al}(t^p)$ for any $\al\in\Phi$, $t\in \FF$. Assume that $F\colon \w\GG\to\w\GG$ is of the form $F_0^m$ or $\gamma F_0^m$ for some $m\geq 1$ and $\gamma$ is an automorphism of order 2 such that $\restr\gamma|{\GG}$ is associated with the graph automorphism of $\Delta$ in type $\tE_6$. We recall $E$ from Definition~\ref{DefE}, that is $E=\spann<\restr F_0|{\GG^F}, \restr \gamma|{\GG^F}>$ in type $\tE_6$, $E=\spann<\restr F_0|{\GG^F} >$ in all other types considered. Let $Z:=\wbG^F/(\GG^F \Z(\wbG)^F)$ seen as a subgroup of $\Out(\GG^F)$.
	
Let us now check the first part of \Ainfty, i.e. the fact that for any $\chi\in\Irr(\GG^F)$ the stabilizer $(ZE)_\chi$ is $Z$-conjugate to a subgroup of type $Z'E'$ for $Z'\leq Z$, $E'\leq E$.

 Lemma~\ref{CDr} shows that it suffices to study the stabilizers of characters of $\GG^F$ in $Z\times E_r$ for $r=|Z|\in\{2,3\} $. We have to prove that if $\chi\in\Irr(\GG^F)$, $z\in Z$, $e\in E_r$ , then an equality $\chi^{ze}=\chi$ holds only if $\chi^z=\chi^e=\chi$. This is equivalent to $\Irr(\GG^F)^{\spann<ze>} =\Irr(\GG^F)^{\spann<z\, ,\, e>} $ or also 
\begin{equation}\label{FiPt}
\text{  $|\Irr(\GG^F)^{\spann<ze>}| =|\Irr(\GG^F)^{\spann<z\, ,\, e>}| $ for any $z\in Z$ and $e\in E_r$.}
\end{equation}
Note that any element of $E_r$ is an $r'$-power of some $\restr F_1|{\GG^F}$
where $F_1\colon\GG\to\GG$ is a Frobenius endomorphism such that $F$ is a power of $F_1$ and both act the same on $\Z(\GG)^F$. Namely when $F=F_0^m$ then $F_1=F_0^{m_{r'}a}$ for $a\geq 1$ a divisor of $m_r$ provide generators for all subgroups of $E_r$ , while for $F=\gamma F_0^m$ one can consider the elements $\gamma F_0^{m_{3'}a}$ for $a\geq 1$ a divisor of $m_3$. Any change $(z,e)\mapsto (z^k,e^k)$ for some $r'$-integer $k$ leaves both $\spann<ze>$ and $\spann<z,e>$ unchanged. Accordingly we may assume that $e$ in (\ref{FiPt}) is $\restr F_1|{\GG^F}$ with $F_1$ satisfying the hypothesis of Theorem~\ref{thm_Char_count}. We may also assume that $z\not=1$, otherwise (\ref{FiPt}) is trivial. So we assume that the action of $z$ is conjugation by $t\in\wbG^F$ such that $\wbG^F=\spann<\GG^F ,t>$. Theorem~\ref{thm_Char_count} then implies that \begin{equation}\label{FiPt2}
\text{  $|\Irr(\GG^F)^{\spann<z\, ,\,e>}|=| \Irr(\GG_{\mathrm{ }}^{F_1})^{\wbG^{F_1}}| $.}
\end{equation}
Let $t_1\in {\wbG^{F_1}}$ such that its $\sim_{F}$-class is the image by $\Norm_{F/F_1}^{(\wbG)}$ of the $\sim_{F_1}$-class of $t$ (see Theorem~\ref{not_Norm}). Then Theorem~\ref{pr_Cla_Sh} implies that $\wbG^{F_1}=\spann<\GG^{F_1},t_1>$ and therefore \begin{equation}\label{FiPt3}
\text{  $| \Irr(\GG_{\mathrm{ }}^{F_1})^{\wbG^{F_1}}|=| \Irr(\GG_{\mathrm{ }}^{F_1})^{\spann<{t_1}>}| $.}
\end{equation}
On the other hand (\ref{Fixpoints1}) tells us that \begin{equation}\label{FiPt4}
\text{  $| \Irr(\GG_{\mathrm{ }}^{F})^{\spann<ze>}|=| \Irr(\GG_{\mathrm{ }}^{F_1})^{\spann<{t_1}>}| $.}
\end{equation}

Combining those three equalities gives our claim (\ref{FiPt}).

We now turn to the extendibility part of the statement \Ainfty\ of Definition~\ref{A(d)}. We will abbreviate $G:=\GG^F$ and $\w G:=\w\GG^F$. We have to prove that any element of $\Irr(G)$ has a $\w G$-conjugate whose stabilizer in $ZE$ has the structure just proven and that it extends to its stabilizer in $GE$. When $E$ is cyclic, this is trivial by \cite[11.22]{Isa}. So the only case we have to check is the one of non-twisted type $\tE_6$ with $F=F_0^m$ and we must then indeed show that any $\chi\in\Irr(G)$ extends to its stabilizer in $GE$. As in the proof of Theorem~\ref{GtildeExt}, one may assume that the non-trivial graph automorphism $\gamma$ is in $ E_\chi$. Let $F_1=F_0^{m_{2'}}$ be the power of $F_0$ such that $\gamma$ and $F_1$ generate a Sylow $2$-subgroup of $E_\chi$. The only $\gamma$-invariant irreducible character of $\Z(G)$ is the trivial one (see \cite[1.15.2(c)]{GLS3}) so $\Z(G)\leq \ker(\chi)$. Let $\wh \chi$ be the extension of $\chi$ to $G\Z(\wG)$ whose kernel contains $\Z(\wG)$. By definition $\wh \chi$ is $\spann<F_1,\gamma>$-invariant. 

Assume that $\chi$ is $\wG$-invariant. Then there are $|\Z(G)|$ different extensions of $\chi$ in $\Irr(\wG\mid \wh \chi)$. The group $\spann<F_1,\gamma>$ is a $2$-group acting on this set with $3$ elements. Hence $\spann<F_1,\gamma>$ fixes some element $\w\chi \in\Irr(\wG\mid \chi)$. 
By Corollary~\ref{gammaF_0stab} this character $\widetilde\chi$ extends to $\wG\spann<F_1,\gamma>$, and this defines us an extension of $\chi$ to $G\spann<F_1,\gamma>$. According to \cite[11.31]{Isa} this implies that $\chi$ extends to $GE_\chi$, since all Sylow subgroups of $E$ of odd order are cyclic. 

Assume $\chi$ is not $\wG$-invariant. The set $\Irr(\wG\mid \wh \chi)$ consists of just one element $\psi $ obtained by inducing $\wh\chi$ from ${G\Z(\wG)}$ to $\wG$. Accordingly this character is $\spann<F_1,\gamma>$-invariant and has degree $3\chi(1)$. Applying Corollary~\ref{gammaF_0stab} again, we have an extension $\w\psi$ to $\wG\spann<F_1,\gamma>$. This in turn has a Clifford correspondent $\psi_0$ in $\Irr((\wG\spann<F_1,\gamma>)_\chi\mid \chi)$ of degree $\chi(1)$ since $|\wG/\wG_\chi |=3$. Because of its degree, $\psi_0$ is an extension of $\chi$. Again this proves that $\chi$ extends to $GE_\chi$. 
\qed

\section{Towards the local conditions $\mathrm{A}(d)$ and $\mathrm{B}(d)$}

The rest of the paper is essentially devoted to checking conditions $\mathrm{A}(d)$ and $\mathrm{B}(d)$ from Definition~\ref{A(d)} for $d\geq 1$ and groups of types $\tB$ and $\tE$, thus completing the proof of Theorem~\ref{thmA} via the criterion given in Theorem~\ref{Glo+Loc}. In the present section we restate and slightly refine two theorems from \cite{CS17C} that list various requirements to check in a splitting of the tasks that has been followed already for types $\tA$ and $\tC$. In the notation of Sect.\,2.A, where $\bS$ is a Sylow $d$-torus of $(\bG ,F)$, those requirements concern subgroups of the relative Weyl group $\norm{\bG}{\bS}^F/\cent{\bG}{\bS}^F$, but also the normal inclusion $\cent{\bG}{\bS}^F\unlhd \norm{\bG}{\bS}^F$ and related character extendibility questions. We single out the case were $\cent{\bG}{\bS}$ is a torus (regular $d$'s) which needs the longest proof even though the citerion is a bit simpler, see Theorem~\ref{thm_4.4} below. The next two sections will adress the cases of type $\tB$ and $\tE$ respectively.

We remain in the setting of Sect.\,2.A, with $\bG$ a simply-connected simple algebraic group over $\FF$ with pair $\bT\leq \bB$ and root system $\Phi$ of basis $\Delta$. Let $W:=\NNN_\bG(\bT)/\bT$ and $\rho\colon \NNN_\bG(\bT)\to W$ the canonical epimorphism. 
Recall that we denote by $\xx_\al (t)$ ($t\in \FF$ and $\al\in\Phi$) the elements of the root subgroups. The group $\bG$ can be presented by those $\xx_\al (t)$'s subject to the Steinberg relations as in \cite[Thm.~1.12.1]{GLS3}. We also use the elements $\n_\al(t)\in\NNN_\bG(\bT)$ and $\h_\al(t)\in \bT$ for $t\in\FF^\times$ defined in Sect.\,2.A.
Let $F_0\colon\bG\to\bG$ be defined by $F_0(\xx_{\al} (t))=\xx_{\al} (t^p)$ whenever $\al\in\Phi$, $t\in\FF$. Recall $\Aut(\Delta)$ and the group $\Gamma_{\bT\leq \bB}$ of graph automorphisms defined for $\gamma\in\Aut(\Delta)$ by $\xx_\al(t)\mapsto \xx_{\gamma(\al)}(t)$ for any $t\in\FF$, $\al\in\pm\Delta$.

The Frobenius endomorphism we consider for $\bG$ is of the form $F=\gamma_0 F_0^m$ for some $m\geq 1$ and $\gamma_0\in\Gamma_{\bT\leq \bB}$. Let $\phi_0$ be the automorphism of $W$ corresponding to $\gamma_0$. Recall from Definition~\ref{DefE} that $E$ denotes the subgroup of $\Aut(\GG_{\mathrm{sc}}^F)$ generated by the restrictions to $\GG_{\mathrm{sc}}^F$ of $F_0$ and $\Gamma_{\bT\leq \bB}$.

Let us recall Tits' extended Weyl group and our notation for automorphisms.

\begin{defi}\label{VhatE} Let
	$V:= \left\langle \nn_\al(1) \mid  \al \in \Phi \right\rangle \leq \NNN_\bG(\bT)$
	and $H := \bT \cap V$. 
	
	For $e :=|\Aut(\Delta)|\  |V |$ we 
	let $\wh E := \Cy_{em}\times \Aut(\Delta)$ where the first term acts on 
	$\GG_{\mathrm{sc}}^{F_0^{ em}}$ by $F_0$ and $\Aut(\Delta)$ as seen above. 
	
	We 
	write $\wh F_0$ for the element of $\wh E$ acting by $F_0$ and $\wh F$ for 
	$\gamma_0\wh F_0^m$. 
\end{defi}

As in Sect.\,2.A we fix a regular embedding $\bG\leq\w\GG$, such that $F_0$, $\Aut(\Delta)$ and $F$ act also on $\w\GG$. Let $\w\bT =\bT.\Z (\w\GG)$.

Recall that whenever $A$ acts on a set $M$ we denote by $A_{M'}$ the stabilizer of $M'$ in $A$, where $M'\subset M$. 

The conditions \AA d and \BB d of Definition~\ref{A(d)} are checked through the following result which refines slightly Theorem 4.3 of \cite{CS17C}. Recall $d\geq 1$ is a positive integer. 

\begin{thm}\label{thm_loc_gen}
	\label{thm_loc_genii}  Assume there exists an element $v\in V$ such that for the Sylow $d$-torus $\bS$ of $(\bT,vF)$ the groups $N:=\norm \bG \bS ^{vF}$, $\w N:=\norm {\w\GG} \bS ^{vF}$,  $\wh N:=(\Cent_{\GG_{\mathrm{sc}}^{F_0^{em}}\wh E}(v\wh F))_{\bS}$, $C:=\cent \bG \bS ^{vF}$ and $\w C:=\cent {\w\GG} \bS^{vF}$ satisfy the following conditions: 
	\begin{asslist}
		\item \label{thm_loc_gen_i}\label{5_3_i}
		$\bS$ is a Sylow $d$-torus of $(\bG,vF)$.
		\item\label{thm_loc_genii1}
		There exists some set $\calT\subseteq \Irr(C)$, such that 
		\begin{enumerate}[label=(ii.\arabic*)]
			\item \label{5_3_ii_1}
			$\wN_\xi= \w C_\xi N_\xi$ for every $\xi\in \calT$,
			\item \label{5_3_ii_2}
			$(\wN\wh N )_{\xi^N} =\wN_{\xi^N} \wh N_{\xi^N}$ for every $\xi\in \calT$, and 
			\item $\calT$ is an $\wh N$-stable $\w C$-transversal of $\Irr(C)$.
		\end{enumerate}
		
		\item \label{thm_loc_genii2}
		There exists an extension map $\Lambda$ with respect to $C\unlhd N$ such that
		\begin{enumerate}[label=(iii.\arabic*)]
			\item 
			$\Lambda$ is $\wh N$-equivariant. 
			\item \label{5_3_iii_2}
			If $E$ is non-cyclic, then every character $\xi\in\Irr(C)$ has an extension $\wh \xi\in\Irr(\wh N_\xi)$ with $\restr{\wh \xi}|{N_\xi} =\Lambda(\xi)$ and $v\wh F\in\ker(\wh\xi)$. 
		\end{enumerate}
		\item \label{thm_loc_genii3}
		Let $W_d:=N/C$ and $\wh W_d:=\wh N/C$. For $\xi\in \Irr(C)$ and $\w\xi\in\Irr(\w C_\xi\mid \xi)$ let $W_{\w\xi}:=N_{\w\xi}/C$, $W_{\xi}:=N_{\xi}/C$, $K:=\NNN_{W_d}(W_\xi,W_{\w\xi})$ and $\wh K:=\NNN_{\wh W_d}(W_\xi,W_{\w\xi})$. Then there exists for every $\eta_0\in\Irr(W_{\w\xi})$ some $\eta\in\Irr(W_{\xi}\mid \eta_0)$ such that 
		\begin{enumerate}[label=({iv}.\arabic*), ref=\thethm(iv.\arabic*)]
			\item $\eta$ is $\wh K_{\eta_0}$-invariant.
			\item \label{5_3_iv_2}
			If $E$ is non-cyclic, then $\eta$ extends to some $\wh \eta\in \Irr(\wh K_{\eta})$ with $v\wh F\in\ker(\wh\eta)$.
		\end{enumerate} 
	\item \label{5_3v} If $\bG$ is of type $\tD$ maximal extendibility holds with respect to $N\unlhd \w N$.
	\end{asslist}
	Then conditions \AA d and  \BB d from Definition~\ref{A(d)} are satisfied by $\bG$.
\end{thm}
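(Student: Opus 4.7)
The plan is to derive conditions \AA d and \BB d from Definition~\ref{A(d)} by running Clifford theory along the chain $C\unlhd N\leq \w N\leq \w N\wh N$, using the extension map $\Lambda$ of (iii) to control the $C\unlhd N$ level, the stabilizer information from (ii) to pass up to $\w C$, $\w N$, $\wh N$, and the relative Weyl group assumption (iv) to lift everything to the full character of $N$. Assumption (i) allows us to work with the fixed twisted Frobenius $vF$ in place of $F$ throughout, as Sylow $d$-tori of $(\bG,F)$ are $\GG^F$-conjugate by \cite[25.11]{MT}, so both \AA d and \BB d are independent of the chosen $\bS$.

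For \BB d, maximal extendibility along $C\unlhd N$ is built into the existence of $\Lambda$, so the task is to promote $\Lambda$ to an extension map $\w\Lambda$ along $\w C\unlhd \w N$ and to obtain maximal extendibility along $N\unlhd \w N$. Given $\theta\in\Irr(\w C)$, pick $\xi\in\calT$ with $\theta\in\Irr(\w C_\xi\mid \xi)$, which is possible by (ii.3). The stabilizer equality \ref{5_3_ii_1} gives $\w N_\theta = \w C_\theta N_{\xi,\theta}$ with $\w C_\theta\cap N_{\xi,\theta} = C$, so one defines $\w\Lambda(\theta)$ as the unique character of $\w N_\theta$ whose restrictions to $\w C_\theta$ and to $N_{\xi,\theta}$ are $\theta$ and $\Lambda(\xi)$ respectively. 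The multiplicativity $\w\Lambda(\restr\epsilon|{\w C}\theta)=\restr\epsilon|{\w N_\theta}\w\Lambda(\theta)$ is then immediate from the construction, and the $(\wbG^FE)_\bS$-equivariance of $\w\Lambda$ follows from the $\wh N$-equivariance of $\Lambda$ in (iii.1) together with \ref{5_3_ii_2}. Maximal extendibility along $N\unlhd \w N$ is the extra assumption (v) in type $\tD$, and otherwise follows from the construction of $\w\Lambda$ and the cyclicity of $\w N/N\cong \w C/C$.

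For \AA d, start with $\chi\in\Irr(N)$ and apply Clifford theory to the normal inclusion $C\unlhd N$: the extension map $\Lambda$ provides a bijection $\Irr(N\mid\xi)\leftrightarrow \Irr(N_\xi/C)$ sending $\eta$ to $(\Lambda(\xi)\eta)^N$, so $\chi$ corresponds to a pair $(\xi,\eta)$ with $\xi\in\Irr(C)$ (unique up to $N$-conjugacy) and $\eta\in\Irr(N_\xi/C)$. Replacing $\chi$ by an $\w N$-conjugate, (ii.3) lets us assume $\xi\in\calT$. Pick $\w\xi\in\Irr(\w C_\xi\mid\xi)$ lying below some further $\w N$-conjugate and apply (iv) to $\eta_0:=\restr\eta|{W_{\w\xi}}$ to replace $\eta$ by a $\wh K_{\eta_0}$-invariant character. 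The resulting $\chi_0$ has stabilizer in $\w N\wh N$ decomposing as $(\w N)_{\chi_0}(\wh N)_{\chi_0}$ by combining \ref{5_3_ii_2} at the $C$-level with (iv.1) at the $N/C$-level, which is precisely the factorization $O=(\wbG^F\cap O)(E\cap O)$ required by \AA d. Finally, when $E$ is non-cyclic the extension of $\chi_0$ to $(\GG^F E)_{\bS,\chi_0}$ is assembled from the extension of $\xi$ from \ref{5_3_iii_2} and the extension of $\eta$ from \ref{5_3_iv_2}, tensored on $N_\xi\cap(\GG^FE)_{\bS,\chi_0}$ and induced up to $N\cap(\GG^FE)_{\bS,\chi_0}$; the additional condition that $v\wh F$ lies in the kernel of both extensions guarantees their product extends to the full stabilizer. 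When $E$ is cyclic the extension is automatic by \cite[11.22]{Isa}.

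I expect the main obstacle to be the simultaneous bookkeeping needed to realize the factorization $O=(\wbG^F\cap O)(E\cap O)$: the transversal $\calT$ must be chosen so that it is compatible \emph{at once} with the $\w C$-action (giving (ii.3)), with the $\wh N$-action (giving \ref{5_3_ii_2} and the $\wh N$-equivariance of $\Lambda$), and so that the resulting stabilizer information can be propagated through the Clifford correspondence to $\chi$. This joint compatibility, rather than any single step, is the delicate point; it is also what accounts for the extra hypothesis (v) in type $\tD$, where maximal extendibility along $N\unlhd \w N$ is not automatic from the construction of $\w\Lambda$.
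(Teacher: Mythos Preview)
Your plan matches the paper's. For \AA d the paper simply invokes \cite[Thm.~4.3]{CS17C}, whose hypotheses are (i)--(iv) here with a slightly weaker (ii.3); your sketch is essentially a recapitulation of that argument. For \BB d the paper likewise builds $\w\Lambda$ from $\Lambda$ and $\calT$, but your gluing step needs a correction: for $\theta\in\Irr(\w C)$ above $\xi\in\calT$, the restriction $\restr\theta|C$ is in general a sum of $\w C$-conjugates of $\xi$, not $\xi$ itself, so $\theta$ and $\Lambda(\xi)$ do not agree on $C=\w C\cap N_\xi$ and cannot be glued directly on $\w C\cup N_{\xi,\theta}$. The paper instead passes to the Clifford correspondent $\w\chi_0\in\Irr(\w C_\xi\mid\xi)$ with $\w\chi_0^{\w C}=\theta$, forms a common extension $\psi$ of $\w\chi_0$ and $\restr{\Lambda(\xi)}|{N_{\w\chi_0}}$ on $N_{\w\chi_0}\w C_\xi$ via \cite[4.1]{S10b}, and sets $\w\Lambda(\theta):=\psi^{\w C N_{\w\chi_0}}$; assumption (ii.1) is precisely what guarantees $\w C N_{\w\chi_0}=\w N_\theta$. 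With this fix your argument goes through, and the $\wh N$-equivariance and compatibility with $\Irr(\w N/N)$ follow as you say from (ii.3) and (iii.1). Maximal extendibility for $N\unlhd\w N$ in types $\neq\tD$ comes directly from cyclicity of $\w N/N$ via \cite[11.22]{Isa}, independently of the construction of $\w\Lambda$.
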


\begin{proof}
	The assumptions are the same as in Theorem~4.3 of \cite{CS17C}, except for (ii.3) which is here strengthened. The cited theorem shows that \AA d is satisfied. 
	
	Let's show that \BB d is also satisfied. By assumption \ref{5_3v} maximal extendibility holds with respect to $N\unlhd\w N$ since, in types $\neq \tD $, $\Z(\bG)$ is cyclic which allows to take a regular embedding such that $\w\GG/\bG$ is of dimension 1 (see Sect.\,2.A) and $\w N/N$ is therefore cyclic. It remains to construct an extension map $\w\Lambda$ with respect to $\w C\unlhd\w N$ that is compatible with the action of $\Irr(\w N/N)$ by multiplication and is $\wh N $-equivariant.
	Let $\chi\in\Irr(\w C)$ and $\chi_0\in \calT\cap \Irr(C\mid \chi)$. Then $\chi =\w\chi_0^{\w C}$ for some extension $\w\chi_0$ of $\chi_0$ to $\w C_{\chi_0}$. According to \cite[4.1]{S10b} there exists a common extension $\psi$ of $\restr{\Lambda(\chi_0)}|{N_{\w\chi_0}}$ and $\w\chi_0$ to $N_{\w\chi_0} \w C_{\chi_0}$. By this definition $\psi^{\w C {N_{\w\chi_0}}}$ is an extension of $\w\chi_0^{\w C}$. 
	By the equation in (ii.1) this character is an extension of $\chi$ to $\w N_{\chi}$ since $\w N_\chi=\w C(\w N)_{\w \chi_0}\leq \w C(\w N)_{ \chi_0}=\w C(\w C_{\chi_0} N_{ \chi_0})=\w C N_{\chi_0}$. Accordingly we can define $\w \Lambda(\chi)$ as $\psi^{\w C {N_{\w\chi_0}}}$.
	
	Since $\calT$ is $\wh N$-stable and $\Lambda$ is $\wh N \w C$-equivariant, $\w\Lambda$ is clearly $\wh N$-equivariant.  By construction it is also compatible with multiplication by linear characters of $\w N/N$. 
\end{proof}

The cases where the Sylow $d$-torus $\bS$ is such that $\cent{\bG}{\bS}$ is a torus correspond to the so-called \textit{regular numbers $d$ for the pair $(\bG ,F)$}, in the sense of \cite[p. 107]{Br}. Then the above criterion for \AA d and \BB d somehow simplifies into the following (see \cite[4.4]{CS17C}).

\begin{thm}\label{thm_4.4}
	Let $d\geq 1$ be a regular number for $(\bG,F)$, i.e. the centralizer of a Sylow $d$-torus is a (maximal) torus. Assume that there exists an element $v\in V$ (see Definition~\ref{VhatE}) such that, denoting by $\bS$ the Sylow $d$-torus of $(\bT ,vF)$ and $\bN:=\norm{\bG}{\bS}$, the following properties hold:
	\begin{asslist}
		\item \label{thm_loc_geni1}
		$\rho(v)\phi_0$ is a $\zeta$-regular element of $W\phi_0$ in the sense of Springer (see \cite[Def.~2.5]{S10a} or \cite[Ch. 5]{Br}) for some primitive $d$-th root of unity $\zeta\in\CC$.
		\item \label{thm_loc_geni2}
		$\rho(V_d)=W_d$ with $V_d:=V^{vF}$ and $W_d:=\Cent_W(\rho(v)\phi_0)$.
		\item \label{thm_loc_geni3} \label{5_4_iii} Let $\wh N:=\Cent_{\bN^{F_0^{em}} \wh E}(v\wh F)_{}$ and  $\wh V_d:=(V\wh E)\cap\wh N$ (see Definition~\ref{VhatE}).
		There exists an extension map $\Lambda_0$ with respect to $H_d:=H^{vF}\unlhd V_d$ such that:
		\begin{enumerate}[label=({iii}.\arabic*)]
			\item 
			$\Lambda_0$ is $\wh V_d$-equivariant.
			\item \label{5_4_iii_2}
			If $E$ is non-cyclic, $\Lambda_0(\la)$ extends for every $\la\in\Irr(H_d)$ to some $\wh \la\in\Irr((\wh V_d)_\la)$ with $v\wh F\in \ker(\wh\la)$.
		\end{enumerate}
		\item \label{thm_loc_geni4}
		Let $C:=\bT^{vF}$, $\w C:=\w\bT^{vF}$, $N:=\bN^{vF}$, $W_d:=N/C$ and $\wh W_d:=\wh N/C$. 
		For $\xi\in \Irr(C)$ and $\w\xi\in\Irr(\w C\mid \xi)$ let $W_{\w\xi}:=N_{\w\xi}/C$, $W_{\xi}:=N_{\xi}/C$, $K:=\NNN_{W_d}(W_\xi,W_{\w\xi})$ and $\wh K:=\NNN_{\wh W_d}(W_\xi,W_{\w\xi})$. Then there exists for every $\eta_0\in\Irr(W_{\w\xi})$ some $\eta\in\Irr(W_{\xi}\mid \eta_0)$ such that 
		\begin{enumerate}[label=({iv}.\arabic*)]
			\item $\eta$ is $\wh K_{\eta_0}$-invariant.
			\item \label{5_4_iv_2}
			If $E$ is non-cyclic, $\eta$ extends to some $\wh \eta\in \Irr((\wh W_d)_{\eta})$ with $v\wh F\in\ker(\wh\eta)$.
	\end{enumerate} 
\item If $\bG$ is of type $\tD$ maximal extendibility holds with respect to $N\unlhd \w N$.\end{asslist} 
	Then conditions  $\mathrm{A}(d)$ and  $\mathrm{B}(d)$ of Definition~\ref{A(d)} are satisfied.
\end{thm}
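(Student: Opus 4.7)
The plan is to derive Theorem~\ref{thm_4.4} from Theorem~\ref{thm_loc_genii} by producing, in the regular-torus setting, the data $\calT\subseteq\Irr(C)$ and the extension map $\Lambda$ for $C\unlhd N$ required by the latter. Assumptions~\ref{5_3_i}, \ref{5_3v} and~\ref{thm_loc_genii3} of Theorem~\ref{thm_loc_genii} transfer essentially for free: the first follows from Springer's theory of regular elements, since a $\zeta$-regular $\rho(v)\phi_0\in W\phi_0$ yields a Sylow $d$-torus of $(\bG,vF)$ as the identity component of its fixed-space torus (see \cite[Ch.~5]{Br}), while the latter two coincide with the corresponding hypotheses of Theorem~\ref{thm_4.4}.

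Since $d$ is regular, $\cent\bG\bS$ is a maximal torus, which may be taken to be $\bT$; hence $C=\bT^{vF}$ and $\w C=\w\bT^{vF}$ are both abelian. Using~\ref{thm_loc_geni2}, the inclusion $V_d\hookrightarrow N$ yields $N=C\cdot V_d$ with $C\cap V_d=H_d$, and the same reasoning inside $\wbG$ gives $\w N=\w C\cdot N$. In particular $\w C$ acts trivially on $\Irr(C)$, so I would take $\calT:=\Irr(C)$; this is automatically a $\wh N$-stable $\w C$-transversal. For~\ref{5_3_ii_1}, any $g\in\w N_\xi$ written $g=cn$ with $c\in\w C$ and $n\in N$ forces $n\in N_\xi$, giving $\w N_\xi=\w C_\xi N_\xi$. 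For~\ref{5_3_ii_2}, since $\w N=\w C N$ acts on $\Irr(C)$ through $W_d=N/C$, every element of $\w N$ preserves each $N$-orbit, so $\w N_{\xi^N}=\w N$; writing then an element of $(\w N\wh N)_{\xi^N}$ as $n\hat n$ with $n\in\w N$ and $\hat n\in\wh N$ forces $\hat n\in\wh N_{\xi^N}$, which yields the desired factorization.

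The core of the argument is the construction of $\Lambda$ for~\ref{thm_loc_genii2} from the given $\Lambda_0$ on $H_d\unlhd V_d$. Given $\xi\in\Irr(C)$, let $\xi_0:=\restr\xi|{H_d}\in\Irr(H_d)$. Because $V_d\twoheadrightarrow W_d$ sends $(V_d)_\xi$ onto $W_{d,\xi}=N_\xi/C$, one has $N_\xi=C\,(V_d)_\xi$ with $C\cap(V_d)_\xi=H_d$; moreover $(V_d)_\xi\leq (V_d)_{\xi_0}$, so $\mu:=\restr{\Lambda_0(\xi_0)}|{(V_d)_\xi}$ is a well-defined extension of $\xi_0$. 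I would then set
\[
\Lambda(\xi)(cv) := \xi(c)\,\mu(v)\qquad(c\in C,\ v\in (V_d)_\xi).
\]
Independence of the factorization $cv$ comes from $\mu|_{H_d}=\xi|_{H_d}$, and multiplicativity from the fact that every $v\in (V_d)_\xi$ centralizes $\xi$ for the conjugation action of $V_d$ on $C$. The $\wh N$-equivariance of $\Lambda$ is to be inherited from the $\wh V_d$-equivariance of $\Lambda_0$ via a factorization $\wh N=C\,\wh V_d$, and condition~\ref{5_3_iii_2} will follow from~\ref{5_4_iii_2} by the same product formula $\wh\xi(cv):=\xi(c)\,\wh\la(v)$, using the $\wh\la$ supplied there, whose kernel contains $v\wh F$ by construction.

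The step I expect to be the main obstacle is the identity $\wh N=C\,\wh V_d$ invoked for the equivariance transfer: it is not an explicit hypothesis of Theorem~\ref{thm_4.4}, and requires showing that $\wh V_d\twoheadrightarrow \wh W_d$, i.e.~that the Tits-lift surjectivity in~\ref{thm_loc_geni2} extends from $W_d$ to $\wh W_d$. In practice this is part of the explicit structural data assembled in the type-by-type analyses of Sections~\ref{sec_locB} and~\ref{sec_locE}, where one verifies by a Frattini-type argument inside $\NNN_{\GG_{\mathrm{sc}}^{F_0^{em}}\wh E}(\bS)$ that $\wh V_d$ already captures all of $\wh W_d$ modulo $C$, so the reduction is clean there even if it entails a slight book-keeping not made visible in the abstract criterion.
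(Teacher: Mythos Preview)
Your overall strategy---take $\calT=\Irr(C)$ (legitimate since $\w C$ is abelian and hence acts trivially on $\Irr(C)$) and construct the extension map $\Lambda$ for $C\unlhd N$ from $\Lambda_0$ via the product formula $\Lambda(\xi)(cv)=\xi(c)\,\Lambda_0(\xi_0)(v)$---is correct and is essentially what the proof of \cite[Theorem~4.4]{CS17C} does. The present paper's proof differs only in packaging: it cites \cite[4.4]{CS17C} wholesale for condition \AA d (and hence for the existence of the $\wh N$-equivariant $\Lambda$), and then adds only the short new ingredient for \BB d, namely that $\w N=\w C N$ follows because $\norm{\w\GG}\bS=\norm\bG\bS\,\cent{\w\GG}\bS$ with connected intersection $\cent\bG\bS=\bT$, after which the construction of $\w\Lambda$ from $\Lambda$ proceeds exactly as in the proof of Theorem~\ref{thm_loc_gen}. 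Your justification of $\w N=\w C N$ (``the same reasoning inside $\wbG$'') is a bit thin; the connectedness argument is the clean way to get it.

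The genuine weakness in your proposal is the handling of $\wh N=C\,\wh V_d$. You correctly flag it as the crux of transferring $\wh V_d$-equivariance of $\Lambda_0$ to $\wh N$-equivariance of $\Lambda$, but your proposed resolution---defer to the type-by-type analyses of Sections~\ref{sec_locB} and~\ref{sec_locE}---would render the abstract criterion circular: Theorem~\ref{thm_4.4} is meant to be a self-contained reduction whose hypotheses are then verified type by type, not the other way around. This surjectivity of $\wh V_d$ onto $\wh W_d$ must be (and in \cite{CS17C} is) established within the proof of the criterion itself, using that $\bN^{F_0^{em}}=\bT^{F_0^{em}}V$ (since $V\leq\bG^{F_0}$) together with the specific design of $\wh E$ in Definition~\ref{VhatE}. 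So your diagnosis of the obstacle is right, but the remedy you offer is not.
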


\begin{proof}
	For condition \AA d, this is Theorem~4.4 of \cite{CS17C}. As explained in the proof of Theorem~\ref{thm_loc_gen}, maximal extendibility with respect to the inclusion $N\unlhd\w N$ is ensured in all types. There remains to check the part of condition \BB d about the inclusion $\w C\unlhd\w N$. We have $\w N =\w C N$ since $ \norm {\w\GG} \bS = \norm {\bG} \bS  \cent {\w\GG} \bS $ and the intersection $\norm {\bG} \bS \cap \cent {\w\GG} \bS $ is connected being $\cent {\bG} \bS$. So we can argue as in the proof of Theorem~\ref{thm_loc_gen} with $\Irr(C)$ replacing $\calT$, since in addition $\w C$ is abelian.
\end{proof}



\section{The local conditions for type $\tB$}\label{sec_locB}
In this section we verify the local conditions \AA d and \BB d from Definition \ref{A(d)} via the criteria given before in the cases where $\bG$ is of type $\tB$.

In the following  $\bG$, $\Phi$, $\w\GG$, $F$, $q=p^m$ and $E$ are defined as in Sect.\,2.A. In this part we assume that $\Phi$ is of type $\tB_l$ (so that $\GG_{\mathrm{sc}}^F={\mathrm{Spin}}_{2l+1}(p^m)$ though this matrix representation won't be used).
Our aim is to verify assumptions \AA d and \BB d of Definition~\ref{A(d)}, thus establishing Theorem \ref{thmA} for that type thanks to Theorem~\ref{Glo+Loc}. 

In type $\tB_l$ the group $E$ is cyclic by definition and hence the extendibility part of condition \AA d is automatically satisfied. We also single out the cases where $\Out(\GG_{\mathrm{sc}}^F)$ is cyclic.  

\begin{lem}\label{simplifications}
	Condition \AA d holds for any $d\geq 1$ if $l=2$ or $p=2$ or $q=p^{m}$ for an odd integer $m$.
\end{lem}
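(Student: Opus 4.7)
The plan is to dispatch the lemma by isolating the extendibility part of \AA d (which is automatic in type $\tB$) and then treating each of the three listed hypotheses by a short group-theoretic argument on $\Out(\GG_{\mathrm{sc}}^F)$.

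Since $\Aut(\Delta)$ is trivial in type $\tB_l$, the group $E$ from Definition~\ref{DefE} is cyclic, generated by $\restr{F_0}|{\GG_{\mathrm{sc}}^F}$. Given $\chi_0\in\Irr(N)$, the quotient $(\GG_{\mathrm{sc}}^F E)_{\bS,\chi_0}/N_{\chi_0}$ embeds into $E$ and is thus cyclic, so \cite[11.22]{Isa} produces the extension of $\chi_0$ required by \AA d. Only the stabilizer factorization $O=(\wbG^F\cap O)(E\cap O)$ for $O=(\wbG^FE)_{\bS,\chi_0}\GG_{\mathrm{sc}}^F$ remains to be checked, and since $\GG_{\mathrm{sc}}^F\leq O$ this is equivalent to the analogous factorization for the image $\overline O$ of $O$ in $\Out(\GG_{\mathrm{sc}}^F)=\Dia\rtimes E$.

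In the case $l=2$, I would invoke the exceptional isomorphism $\mathrm{Spin}_5\cong\mathrm{Sp}_4$: the simply-connected group $\bG$ is at the same time of type $\tB_2$ and of type $\tC_2$, and all the data entering \AA d ($\bS$, $\wbG$, $E$, $F$) are intrinsic to the algebraic group $(\bG,F)$, not to the labelling of its root system. The verification of \AA d for type $\tC_l$ with $l\geq 2$ carried out in \cite{CS17C} then applies directly. In the case $p=2$, the scheme-theoretic center $\Z(\bG)\cong\mu_2$ has trivial $\bar{\FF}$-points, hence $\Z(\bG)^F=1$; the Lang--Steinberg sequence for the central isogeny $\bG\times\Z(\wbG)\to\wbG$ forces $\wbG^F=\GG_{\mathrm{sc}}^F\Z(\wbG)^F$, so $\Dia=1$. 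The factorization then collapses to $O=\GG_{\mathrm{sc}}^F(E\cap O)$, which is immediate as $O\subseteq\GG_{\mathrm{sc}}^FE$.

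Finally, for $m$ odd I would observe that $|\Dia|$ divides $|\Z(\bG)^F|=2$ while $|E|$ divides $m$ and is therefore odd. Since $\Aut(\ZZ/2\ZZ)$ is trivial, the semidirect product $\Dia\rtimes E$ is a \emph{direct} product of groups of coprime orders $2$ and $m$. Hence every subgroup $H\leq\Dia\times E$ decomposes as $H=H_2\times H_{2'}$ with $H_2\leq\Dia$ the Sylow $2$-subgroup and $H_{2'}\leq E$ the Hall $2'$-subgroup. Applied to $\overline O$, this yields $\overline O=(\overline O\cap\Dia)(\overline O\cap E)$ and, lifting to $\wbG^FE$ by adjoining $\GG_{\mathrm{sc}}^F$, the desired equality $O=(\wbG^F\cap O)(E\cap O)$. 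None of the three cases presents any genuine obstacle; the only point requiring mild care is confirming in the case $p=2$ that the non-reducedness of $\Z(\bG)\cong\mu_2$ does not produce additional $F$-fixed points, which is standard.
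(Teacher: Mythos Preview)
Your proof is correct and follows essentially the same approach as the paper. Both handle $l=2$ via the $\tB_2\cong\tC_2$ isomorphism and \cite{CS17C}, both reduce $p=2$ to triviality of $\Z(\bG)$, and both dispose of odd $m$ by coprimality of $|\Dia|\leq 2$ and $|E|\mid m$; your version is somewhat more explicit about the factorization step in $\Out(\GG_{\mathrm{sc}}^F)=\Dia\times E$, while the paper compresses this to the observation that the diagonal part is the Sylow $2$-subgroup of the cyclic quotient $O/(\GG_{\mathrm{sc}}^F\cent{O}{\GG_{\mathrm{sc}}^F})$.
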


\begin{proof}
	For $l=2$ the group $\bG$ is isomorphic to a simply-connected simple group of type $\tC_2$ and hence the statements \AA d and \BB d follow from 5.1 and 6.1 of \cite{CS17C}, respectively. For even $p$ the group $\wbG$ can be taken to be $\bG$ since $\Z(\bG)$ is trivial. This implies the statement in that case. If $m$ is odd the group $E$ has odd order and then the claim follows from the fact that in the notation of Definition~\ref{A(d)}, $\wbG^F/(\GG_{\mathrm{sc}}^F \cent{O}{\GG_{\mathrm{sc}}^F})$ is the Sylow $2$-subgroup of the cyclic group $O/ (\GG_{\mathrm{sc}}^F \cent{O}{\GG_{\mathrm{sc}}^F})$. 
\end{proof}
The above statement justifies that in the following we assume $8\mid(q-1)$ and $l\geq 3$. Here is more notation specific to type $\tB_l$. 

\begin{notation}\label{n_typB}
	The root system $\Phi$ of $\bG$ and its system of simple roots $\Delta =\{\al_1,\ldots,\al_l\}$ are given as follows (see for instance \cite[1.8.8]{GLS3} with a slightly different convention). Letting $(e_1,\dots ,e_l)$ be the canonical orthonormal basis of $\mathbb{R}^l$ with its euclidean scalar product, one takes $\al_1= e_1$, $\al_2=e_2-e_1$, $\dots$ , $\al_l=e_l-e_{l-1}$. We identify the Weyl group $W$ of $\bG$, a Coxeter group of type $\tB_l$ with the subgroup of bijections $\si$ on $\{ \pm 1, \ldots \pm l\} $ such that $\si(-i)=-\si(i)$ for all $1\leq i\leq l$, see also \cite[\S 5]{S10a}. This group is denoted by $\Sym_{\pm l}$. 
	Via the natural identification of $\Sym_l$ with a quotient of $\Sym_{\pm l}$ and the identification of $\Sym_{\pm l}$ with $W$, the map $\rho:\bN\ra W$ induces an epimorphism $\ov\rho\colon V\longrightarrow \Sym_l$ (see Definition~\ref{VhatE}). In $\Sym_{\pm l}$ the set
	$$ \left \{ 
	\sigma \in \Sym_{\pm l} \, \, \big| |\{1,\cdots, l\} \cap \sigma^{-1}(\{-1,\cdots, -l\})| \text{ is even } \right \} $$
	forms a normal subgroup with index $2$, naturally isomorphic to a Coxeter group of type $\tD_l$. We denote by $W_{\tD}$ the associated subgroup of $W$ and $\bN_c:=\rho^{-1}(W_{\tD})$. 
\end{notation}

\subsection{The regular case: Construction of a Sylow $d$-torus}\label{sec10A}
We first consider the case where $d$ is a regular number for $W$, that is a divisor of $2l$, see \cite[Tab.~1]{S10a}. This is also equivalent to $\cent{\bG}{\bS}$ being a torus for any Sylow $d$-torus $\bS$, see \cite[2.5, 2.6]{S09} and we will establish condition \AA d by use of Theorem~\ref{thm_4.4}.

As in \cite[\S 4]{CS17C} we choose a Sylow $d$-twist in the sense of \cite[Def.~3.1]{S09} using $$v_0:=\n_{\al_1}(1)\cdots \n_{\al_l}(1)\in V$$ (see Definition~\ref{VhatE}). Note that 
$\rho(v_0)=(1,2,\ldots,l,-1,-2,\ldots, -l).$ 
Let $$v:=(v_0)^{\frac{2l}{d}}$$ and let $d_0$ be the length of the $\ov\rho (v)$-orbits and $a$ the number of orbits. Then 
\begin{align}  a={l\over d_0}\ \text{ and } \ 
d_0&=\begin{cases} d& \text{ if } 2\nmid d,\\
\frac d 2& \text{ if } 2\mid d.\\
\end{cases} \label{defd0}\end{align}
Recall $\bN_c=\rho^{-1}(W_\tD)$ and denote $N_c:=\bN_c^{vF}$.
\begin{lem}\label{lem106}
	We have $v\in N_c$ if and only if $2\mid a $ or $2\nmid d$. 
\end{lem}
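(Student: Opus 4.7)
The approach is to reduce the condition $v\in N_c$ to a question purely inside the Weyl group and then resolve it by a direct combinatorial count of sign changes. The first step is to observe that $v\in V$ and that $F$ fixes $V$ pointwise: indeed in type $\tB$ the Frobenius $F=F_0^m$ is split and $V\leq \bG^{F_0}$. Hence $vF(v)v^{-1}=v$, so that $v\in\bN^{vF}$ holds automatically. The membership $v\in N_c=\bN_c^{vF}=\bN_c\cap\bN^{vF}$ therefore reduces to $v\in\bN_c$, i.e.\ to $\rho(v)\in W_\tD$.

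The second step is to make $\rho(v)$ explicit and count its sign changes, since by Notation~\ref{n_typB} an element of $\Sym_{\pm l}$ belongs to $W_\tD$ precisely when it sends an even number of elements of $\{1,\ldots,l\}$ into $\{-1,\ldots,-l\}$. Writing $k:=2l/d$ one has $\rho(v)=\rho(v_0)^k$, where $\rho(v_0)=(1,2,\ldots,l,-1,-2,\ldots,-l)$ is a single $2l$-cycle. I identify this cycle with $\ZZ/2l\ZZ$ via $j\leftrightarrow j$ for $j\in\{1,\ldots,l\}$ and $-j\leftrightarrow l+j$ for $j\in\{1,\ldots,l\}$. Under this identification $\rho(v_0)^k$ becomes the cyclic shift by $k$, and an index $i\in\{1,\ldots,l\}$ is sent to a negative value exactly when $(i+k)\bmod 2l\in\{l+1,\ldots,2l\}$. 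A routine count then yields that the number of such $i$ equals $\min(k,2l-k)$ for all $k\in\{1,\ldots,2l\}$.

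The third step is to substitute according to the parity of $d$, using the formulas \eqref{defd0}. If $2\nmid d$, then $d_0=d$ and $k=2a$, so $\min(k,2l-k)=\min(2a,2l-2a)$ is even irrespective of $a$, giving $\rho(v)\in W_\tD$. If $2\mid d$, then $d_0=d/2$ and $k=a$; as $a=l/d_0\leq l$ one has $\min(k,2l-k)=a$, and so the count is even if and only if $a$ is even. Combining the two cases one obtains $\rho(v)\in W_\tD$ iff $2\mid a$ or $2\nmid d$, which is the claim.

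I expect the only delicate point to be the identification of the $2l$-cycle with $\ZZ/2l\ZZ$ in step two and the avoidance of off-by-one errors in the formula $\min(k,2l-k)$; everything else is a short verification based on Definition~\ref{VhatE} and Notation~\ref{n_typB}.
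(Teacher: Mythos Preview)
Your proof is correct and follows exactly the approach indicated by the paper, which simply says the claim ``follows from the characterization of the elements of $W_{\tD}$ as elements of $\Sym_{\pm l}$'' and leaves the sign-change count to the reader. You have filled in that count carefully; the only cosmetic point is that your use of $\bN$ in Step~1 is ambiguous (in the paper $\bN$ denotes $\norm{\bG}{\bS}$), but all you actually need there is that $v$ is $vF$-fixed, which you establish, so the reduction to $\rho(v)\in W_{\tD}$ is clean.
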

\begin{proof}
	This follows from the characterization of the elements of $W_\tD$ as elements of $\Sym_{\pm l}$.
\end{proof}

\begin{lem}\label{lemrhoVd_B}
	The element $\rho(v)$ is a $\zeta$-regular element of $W$ for any primitive $d$-th root of unity $\zeta\in \CC$ and 
	\begin{align}\label{rhoVd_B}
	\rho(V_d)=W_d,
	\end{align}
	where $V_d:=V^{vF}$ and $W_d:=\cent W v$. In particular assumptions (i) and (ii) of Theorem~\ref{thm_4.4} hold for $v$.
\end{lem}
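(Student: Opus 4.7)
\emph{Proof plan.} For assertion (i) the key observation is that $\rho(v_0)=(1,2,\ldots,l,-1,-2,\ldots,-l)\in \Sym_{\pm l}$ is a Coxeter element of $W$ and hence has order equal to the Coxeter number $h=2l$ of type $\tB_l$. By Springer's theorem on regular elements (e.g.\ \cite[Thm.~4.2]{S10a}), any Coxeter element is $\zeta_h$-regular for every primitive $h$-th root of unity $\zeta_h\in\CC$. A routine consequence is that for any divisor $d$ of $h$ the power $\rho(v_0)^{h/d}=\rho(v)$ is $\zeta_h^{h/d}$-regular, since a regular $\zeta_h$-eigenvector for $\rho(v_0)$ lies in the $\zeta_h^{h/d}$-eigenspace of $\rho(v)$. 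As $\zeta_h$ varies over the primitive $h$-th roots of unity, $\zeta_h^{h/d}$ ranges over all primitive $d$-th roots of unity, proving assertion (i); note that $\phi_0=\Id$ in type $\tB$ so the claim matches \ref{thm_loc_geni1} of Theorem~\ref{thm_4.4}.

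For (ii), I would first reduce the fixed-point set to an ordinary centralizer. Each generator $\n_\al(1)=\xx_\al(1)\xx_{-\al}(-1)\xx_\al(1)$ of $V$ is fixed by $F_0$ since its coefficients lie in the prime field, so $F=F_0^m$ acts trivially on $V$ and $V_d=V^{vF}=\Cy_V(v)$. The inclusion $\rho(\Cy_V(v))\subseteq \Cy_W(\rho(v))=W_d$ is immediate, so only surjectivity needs work. Using the cycle description of $\rho(v)$ as $2l/d$ disjoint $d$-cycles on $\pm\{1,\ldots,l\}$ derived from \eqref{defd0}, one describes $W_d\leq \Sym_{\pm l}$ explicitly as the signed-permutation centralizer of $\rho(v)$ (a complex reflection group by Springer's theorem), with natural generators: powers of $\rho(v)$ itself, signed permutations cyclically rotating the $a=l/d_0$ orbits, and, if $d$ is odd, independent sign changes on individual orbits. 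The plan is to lift each such generator to $V$ as a Tits-like product of $\n_\al(1)$'s indexed by roots supported on the appropriate subsystem, mirroring the construction of $v_0$, and to verify via the Steinberg relations that the chosen lift either commutes with $v$ or differs from a commuting one by an explicit element of $H$.

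The main obstacle is the sign-bookkeeping in the Tits extension $1\to H\to V\to W\to 1$: the commutators $[\widetilde w,v]\in H$ produced by naive lifts of the generators of $W_d$ assemble into an $H$-valued $2$-cocycle on $W_d$ that must be shown to be a coboundary in order to conclude $\rho(\Cy_V(v))=W_d$. My plan is to follow the scheme used in \cite[\S 5]{CS17C} for type $\tC$: handle each type of generator separately, exploit the fact that $v$ is itself the Tits-canonical lift of $\rho(v)$ so that Tits-canonical lifts of the powers of $\rho(v)$ commute with $v$ automatically, and organize the remaining computations according to the parity of $d$. Once this is completed, both assumptions \ref{thm_loc_geni1} and \ref{thm_loc_geni2} of Theorem~\ref{thm_4.4} are in place.
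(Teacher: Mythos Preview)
Your argument for part (i) via Coxeter elements and Springer's theorem is correct and is essentially what underlies the cited reference.

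For part (ii), however, you take a different route from the paper. The paper's proof is a one-line transfer: the Weyl group of type $\tB_l$ coincides with that of type $\tC_l$ as a Coxeter group, and therefore so do their braid groups; since the argument for $\rho(V_d)=W_d$ in \cite[\S 5.A]{CS17C} is carried out at the level of the braid group (good lifts of regular elements and their centralizers in the braid monoid surject onto $V_d$), it applies verbatim to type $\tB_l$ with no new computation needed. By contrast, your plan is to lift explicit generators of $W_d$ to $V$ and verify commutation with $v$ case by case, controlling the resulting $H$-valued obstruction. This is a legitimate strategy---and the paper does construct exactly such explicit lifts $c_k$, $p_k$ later in the proof of Theorem~\ref{vgoodB}, but only \emph{after} $\rho(V_d)=W_d$ is already known (it is invoked to guarantee that some lift exists before pinning it down). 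So your approach reverses the logical order the paper uses and replaces a conceptual transfer by a hands-on computation.

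The trade-off: the paper's braid-group argument is uniform in $d$ and requires no case analysis, at the cost of relying on the machinery in \cite{CS17C}. Your approach is more self-contained but you have only sketched it; the cocycle verification you flag as the ``main obstacle'' is genuinely where the work lies, and you have not carried it out. If you want to complete your route, the explicit lifts $c_k$ and $p_k$ constructed in the proof of Theorem~\ref{vgoodB} (together with the observation that $v$ itself lies in $V_d$) already give a generating set of $\Cy_V(v)$ mapping onto $W_d$, so you could simply reorganize that material.
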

\begin{proof}
	The group $W$ is at the same time a Coxeter group of type $\tC_l$. The element $\rho(v)$ was proved to be a $d$-regular element for $W$, see \cite[5.A]{CS17C}. The arguments there show  $\rho(V_d)=W_d$ as a consequence of an analogous property of the braid groups. The same considerations apply since the braid groups coincide for type $\tB_l$ and $\tC_l$. 
\end{proof}

\subsection{The regular case: Maximal extendibility with respect to $H_d\unlhd V_d$}

In the next step we verify that for the element $v$ the groups $H_d:=V^{vF}\cap \bT$ and $V_d=V^{vF}$ satisfy assumption (iii) of Theorem~\ref{thm_4.4}. Note that the groups $H_d$ and $V_d$ in type $\tB_l$ differ from what they are in type $\tC_l$. 

\begin{thm}\label{vgoodB} 
	Maximal extendibility holds with respect to $H_d\unlhd V_d$. As a consequence, assumption (iii) of Theorem~\ref{thm_4.4} holds. 
\end{thm}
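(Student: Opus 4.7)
The proof has two parts: first establish maximal extendibility for $H_d \unlhd V_d$, and second deduce assumption (iii) of Theorem~\ref{thm_4.4}. For the second part, note that in type $\tB_l$ the group $E$ is cyclic (see the paragraph before Lemma~\ref{simplifications}), so condition (iii.2) of Theorem~\ref{thm_4.4} is vacuous. Given maximal extendibility, a $\wh V_d$-equivariant extension map $\Lambda_0$ can be built orbit-by-orbit on $\Irr(H_d)$ using that $H_d$ is an abelian group normalized by $\wh V_d$, that extensions of any character form a torsor under the (cyclic) character group of any given quotient $(V_d)_\lambda / H_d$, and a standard choice-of-representatives argument (compare e.g.\ the argument at the end of the proof of Theorem~\ref{thm_loc_gen}).

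For the main part, I would first give an explicit description of $H_d$ and $V_d$ adapted to $v$. Using $v = v_0^{2l/d}$ and the orbit data from \eqref{defd0}, the action of $\rho(v)$ on $\{\pm 1,\ldots,\pm l\}$ decomposes into $a$ cycles of length $d_0$ (or $2d_0$ when $2\mid d$). For each such orbit, one produces a canonical lift to $V_d$ by taking a suitable product of the Tits generators $\n_{\al_i}(1)$ along the orbit, and a collection of canonical elements in $H_d$ by taking products of $\h_{\al_i}(-1)$'s along the orbit, together with the central involution of $H$ coming from $\Z(\bG)^F$. Combined with $\rho(V_d) = W_d$ from \eqref{rhoVd_B}, this presents $V_d$ as a group extension $1 \to H_d \to V_d \to W_d \to 1$ with $W_d$ embedded in a wreath product $\Cy_d \wr \Sym_a$ (or its index-2 subgroup, depending on Lemma~\ref{lem106}).

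Next, for a given $\lambda \in \Irr(H_d)$, I would compute the stabilizer $(V_d)_\lambda$ and reduce extendibility to a cocycle-vanishing statement. Concretely, writing $(W_d)_\lambda := (V_d)_\lambda/H_d$, extendibility of $\lambda$ is equivalent to the triviality of a class in $H^2((W_d)_\lambda, \CC^\times)$ obtained by pulling back the $H_d^\wedge$-valued cocycle of $V_d/H_d$ along $\lambda$. I would check this by specifying an explicit value of the would-be extension $\wt\lambda$ on a lift in $(V_d)_\lambda$ of each generator of $(W_d)_\lambda$, and verifying the cocycle identity by direct computation of commutators of Tits generators using the Steinberg relations. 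The commutators that arise are central in $V$, lie in $H_d$, and their images under $\lambda$ are explicit roots of unity.

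The main obstacle, and the reason (as flagged in the introduction) the argument is substantially heavier than for type $\tC_l$, is the non-trivial spin cocycle in $H$. In type $\tB_l$ commutators $[\n_{\al_i}(1), \n_{\al_j}(1)]$ involving the short root $\al_1$ yield elements of $\Z(\bG) \leq H$ that vanish in type $\tC_l$ but not here, and one must also track whether $\lambda$ is trivial on $\Z(\bG)^F \cap H_d$. Accordingly the verification is expected to split into several subcases along: the parity of $d$, the parity of $a$ (so that we know whether $v \in N_c$ by Lemma~\ref{lem106}), and the restriction of $\lambda$ to the short-root part of $H_d$; each subcase is handled by making the choice of lifts above explicit enough that the resulting cocycle computation trivializes.
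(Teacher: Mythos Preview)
Your overall approach matches the paper's: describe $H_d$ and $V_d$ via explicit generators adapted to the $\oo v$-orbits, then extend each $\lambda\in\Irr(H_d)$ case by case. The paper carries this out concretely by introducing elements $h_0,h_1,\ldots,h_a\in H$ and $c_1,\ldots,c_a,p_1,\ldots,p_{a-1}\in V_d$ with explicit commutation relations, and its primary case split is on the value $\lambda(h_0)\in\{\pm 1\}$ (that is, on $\restr\lambda|{\Z(\bG)}$), with the parity of $d_0$ entering secondarily; your proposed split on the parities of $d$ and $a$ and the ``short-root part'' of $\lambda$ is in the same spirit but organised slightly differently.

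One point needs correction. In your deduction of assumption~(iii) you assert that the extensions of $\lambda$ form a torsor under the character group of a \emph{cyclic} quotient $(V_d)_\lambda/H_d$. This quotient is not cyclic in general: it is a subgroup of $W_d\cong\calG\wr\Sym_a$, and for instance when $\lambda(h_0)=-1$ it contains a full symmetric group $\Sym_a$ (see the description of $V_{d,\lambda_1}$ in the paper's proof). The actual reason $\wh V_d$-equivariance follows for free from maximal extendibility is different: $\wh E=\spann<\wh F_0>$ acts as $F_0$, and $F_0$ fixes every $\n_\al(\pm1)$, hence acts trivially on all of $V$ and in particular on $V_d$; consequently $\wh V_d$ acts on $V_d$ only through $V_d$-conjugation, and any $V_d$-equivariant extension map (which one builds by the usual transport along $V_d$-orbits, using that inner automorphisms fix characters) is automatically $\wh V_d$-equivariant. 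The paper dispatches this step by citing \cite[Rem.~4.5]{CS17C}.
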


As in the proof of \cite[Thm.~5.3]{CS17C} we first analyze the structure of $H_d$ and $V_d$ before we extend the characters of $H_d$ to their inertia groups in $V_d$. 

\textbf{The group $H_d$.} Let $\oo v:=\ov\rho (v)$ and $\cO_k$ be the $\oo v$-orbit on $\{1,\dots ,l\}$ containing $k$. Let $\varpi\in\FF$ be a fourth root of unity and $$h_0:=\h_{e_1}(-1).$$ For $1\leq k\leq {a}$ let $$h_k:=\prod_{i\in \cO_k}\h_{e_i}(\varpi).$$ By Chevalley relations the following equalities hold:
\begin{align}
h_k ^2= \h_{e_1}(-1)^{|\cO_k|}=h_0^{|\cO_k|}\label{eq10.1}
&\text{ and }\\
h_k^v= \begin{cases} h_k & \text{if } 2\nmid d\\
h_0h_k& \text{otherwise.} \end{cases}&
\end{align}

Note that for any $h\in H$ there exist elements $t_i\in \spann<\varpi>$ such that $h=\prod_{i=1}^l \h_{e_i}(t_i)$ and $(\prod t_i)^2=1$. 
One shows that $h_k\notin H_d$ and $h_k h_{k+1}\in H_d$ for every $1\leq k <a$. Hence $H_d$ is an elementary abelian $2$-group of rank $a$, namely
$$H_d:=\cent Hv=\spann<h_0> \times \spann<h_1h_2> \times\cdots\times \spann<h_{a-1}h_{a}>.$$ 

\textbf{The group $V_d$ and some of its elements.}
Let $\ov c_1 \in \cent W{\rho(v)}$ be defined as in the proof of Theorem 5.3 of \cite{CS17C}: If $2\mid d$ let $$\ov c_1:=
(1,a+1,\ldots, a (d_0-1)+1, -1, -a-1,\ldots, -a (d_0-1)-1 )\in \cent W {\rho(v)}.$$ 
This is the cycle of $\rho(v)$ containing $1$. If $2\nmid d$ let 
\begin{align*}
\ov c_1':=
&(\,\,\,1,&\,2a+1,&\,\,\,\, 4a+1,& \ldots, \,(d-1)a+1,& -a-1,&\ldots, -(d-2)a-1 )&\\
&(- 1,&- 2a-1,&-4a-1,& \ldots, -(d-1)a-1,& a+1,&\ldots, (d-2)a+1 )&
\in \cent W {\rho(v)}.
\end{align*} 
Let $\ov c_1:=\ov c_1'\prod_{i=0}^{d-1 } (ia+1,-ia-1) \in \cent W {\rho(v)}$. Note that $\ov c_1'$ is the pair of cycles of $\rho(v)$ containing $\pm 1$. 	

Because $\rho(V_d)=W_d$ there exists some $c_1\in V_d$ with $\rho(c_1)=\overline{c}_1$. Let $\Phi_1:=\Phi\cap {\spann<\pm e_i\mid i\in \cO_1> }$ and $V_{\cO_1}:=\spann< \n_\al (\pm 1)\mid \al\in \Phi_1>$. In the next step we construct $c_1\in(V_{\cO_1})^{vF}$.
By definition $\n_\al(\pm 1)^2\in H$ and hence $c_1$ can be chosen to be contained in $\bT V_{\cO_1}$. 
Let $H_{1}:=\spann< \h_\al (\varpi)\mid \al\in \Phi_1>$, 
$\Phi_1':=\Phi\cap {\spann<\pm e_i\mid i\notin \cO_1> }$, $H_{1'}:=\spann< \h_\al (\varpi)\mid \al\in \Phi_1'>$ and $V_{\cO_1}:=\spann< \n_\al (\varpi)\mid \al\in \Phi_1>$. Then $c_1\in H_{1'}V_{\cO_1}$. Since $V_{\cO_1}\cap H_{1'}=\spann<h_{e_1}(-1)>$ by Chevalley relations we see that $c_1=c_1' h$ for some $c_1'\in V_{\cO_1}$ and $h\in H_{1'}$. Let $J\subseteq \{1,\ldots , l\} \setminus \cO_1$ with $h=\prod_{i\in J} h_{e_i}(\varpi)$. The equality $c_1^v=c_1$ implies $[h,v]=[c_1',v]\in \{\h_{e_1}(\pm 1)\} $. Since $c_1\in V$ either $c_1',h\in V$ or $c_1',h\notin V$. As $[h,v]\in \{\h_{e_1}(\pm 1)\} $ we see that $J$ is $\oo{ v}$-stable, hence a union of orbits $\cO_k$. If $d_0=d$ this implies $[h,v]=1$ and if $2\nmid d_0$, $[h,v]=1$ and $h\in H$ are equivalent. Going through all possible cases one sees that either $c_1'$ or $c_1'h_1$ is contained in $V_{\cO_1}\cap V_d=V^{vF}$. In the following we denote this element by $c_1$. 
The group $ (V_{\cO_1}\cap V_d)\cent {H_1}v$ is generated by $h_1$, $h_0$ and $c_1$, i.e., 
$ (V_{\cO_1}\cap V_d)\cent {H_1}v=\spann <h_1,h_0,c_1>$.

Recall $v_k:=\n_{\al_k}(1)\in \bG$. Like in the proof of Theorem \cite[5.3]{CS17C} let $p_k:=\prod_{i=0}^{d_0-1}{v_k}^{v^i}$ for $1\leq k \leq (a-1)$. The considerations from there show 
$$
\rho(p_k)(\cO_k)=\cO_{k+1} \text{ and }\rho(p_k)(\cO_{k+1})=\cO_{k},$$ 
as well as $p_k\in V_d$ and $p_k \in V_{\cO_k\cup\cO_{k+1}}$, where $V_{\cO_k\cup\cO_{k+1}}:=\spann< \n_\al (\pm 1)\mid \al\in \Phi_{k,k+1}>$ with $\Phi_{k,k+1}:=\Phi\cap {\spann<\pm e_i\mid i\in \cO_k\cup \cO_{k+1}> }$. The square of $p_k$ satisfies 
\begin{align}\label{eq10.pk2}
\begin{split}
p_k^2&=\prod_{i=0}^{d_0-1}({v_k}^{v^i})^2=
\prod_{i=0}^{d_0-1} \h_{e_{\oo{v}^i(k)}-e_{\oo{v}^i(k+1)}} (-1) \\
&=\prod_{j\in \cO_k}\h_{2e_j}(\varpi)\left (\prod_{j\in \cO_{k+1}}\h_{2e_j}(-\varpi)\right )=
h_k h_{k+1} h_0^{|\cO_k|}\in H_d. 
\end{split}
\end{align}
Note that since the elements $v_1,\dots, v_{a-1}$ satisfy the braid relations this applies also to $p_k$. For $2\leq k \leq a$ let $c_k:=(c_1)^{p_1 \cdots p_{k-1}}$. 

From their definitions one shows that these elements satisfy the following equations: 
For every $0\leq i\leq a$, $1\leq j \leq a$ and $1\leq k <a$ we have 
\begin{align}
h_i^{c_j}&=\begin{cases} h_i &\text{if } i\neq j,\\
h_i h_0 & \text{if } i=j,	\end{cases} \label{eq10.3hicj}
\end{align}
and also 
\begin{align}\label{eq10.4}
h_i^{p_k}&=\begin{cases} h_i &\text{if } i\notin \{k,k+1\},\\
h_{i+1} & \text{if } i=k,\\
h_{i-1} & \text{if } i=k+1.	\end{cases}
\end{align}
Further by the Chevalley relations 
\begin{align}\label{eq10.5} [c_i,c_j]&=h_0 	\end{align} 
for every $1\leq i,j\leq a$, see also \cite[2.1.7(c), proof of Lem.~10.1.5]{S07}.
Since $\rho(V_d)=\Spann<\ov c_i|1\leq i\leq a>\rtimes \Sym_a$ we see that the elements $c_i$ ($1\leq i \leq a$) together with the elements $p_k$ ($1\leq k < a$) generate the group $V_d$. 

Those elements now play a crucial role in describing the Clifford theory for $H_d\unlhd V_d$.
\begin{proof}[Proof of Theorem \ref{vgoodB}]
	Let $\la\in \Irr(H_d)$, $\wh H_d:=\spann <h_1>H_d$ and $\la_1\in\Irr(\wh H_d\mid \la)$. Note that by Equation \eqref{eq10.1} the group $\spann <h_0,h_1>$ is a cyclic group of order $4$ for odd $|\calO_1|=d_0$ and a Klein 4-group for even $d_0$. In the following we extend $\la$ to $V_{d,\la}$. We construct the extension in those two cases differently. Further this construction also depends on the value of $\la(h_0)$. 
	
	\medskip
	
	For completeness note that for $a=1$ the group $H_d$ is a cyclic group of order $2$ and $V_d=\spann<c_1, h_0>$ is abelian by \eqref{eq10.3hicj}. This implies the statement in that case. 
	
	\medskip
	
	We consider now the case where $a\geq 2$ and $\la(h_0)=-1$. Then $\wh H_d$ is the central product of the groups $\spann <h_0,h_i>$ ($0\leq i \leq a$).
	If $2\nmid d_0$, the group $\spann <h_0,h_1>$ is a cyclic group of order $4$ and by \eqref{eq10.3hicj} the elements $c_i$ ($1\leq i \leq a$) act on the groups $\spann <h_0,h_i>$ by $h_0\mapsto h_0$ and $h_i\mapsto h_ih_0$. 
	Also if $2\mid d$, the group $\spann <h_0,h_1>$ is a Klein 4-group and by \eqref{eq10.3hicj} the elements $c_i$ ($1\leq i \leq a$) act on the groups $\spann <h_0,h_i>$ by inversion. Hence $\la_1$ is $\Spann<c_i|1\leq i \leq a>$-conjugate to a character $\la'_1$ with 
	$$\la'_1(h_1)=\cdots =\la'_1(h_a).$$
	After replacing $\la$ and hence $\la_1$ by a suitable $V_d$-conjugate we can assume that $\la_1$ satisfies already this equation. 
	
	From equations \eqref{eq10.3hicj} and \eqref{eq10.4} we see that 
	\begin{equation}\label{eqVdla1}
	SC'= \wh H_d V_{d,\la_1}
	\end{equation}
	where $$S:=\wh H_d \Spann <p_i | 1\leq i <a > \und C':=\wh H_d \Spann<c_i^2| 1\leq i< a>.$$ 
	Let $c_0:=c_1\cdots c_a $. Clearly $V_{d,\la_1}\leq V_{d,\la}$. Recall $|\wh H_d: H_d|=2$. This implies $|V_{d,\la}:V_{d,\la_1}|\leq 2$. According to \eqref{eq10.3hicj} we see that $\la_1^{c_0}(h_i)=\la_1(h_0h_i)=-\la_1(h_i)$ for every $1\leq i \leq a$ and hence $\la_1^{c_0}$ is another extension of $\la$ to $\wh H_d$. This proves $c_0\in V_{d,\la}$ and hence 
	\begin{align}\label{VlamndainWB}
	V_{d,\la}&= V_{d,\la_1}\spann<c_0> .
	\end{align}
	Recall $\bN_c:=\rho^{-1}(W_{\tD})$. We have \begin{equation}\label{Vdla1inNc}
	V_{d,\la_1}\leq \bN_c\ .
	\end{equation}
	
	Note that $C'$ is abelian and $\la_1$ extends to some $\wh \la_1 \in \Irr(C')$ with $\wh \la_1(c_i^2c_{i+1}^{-2})=1$. By that definition $\wh \la_1$ is $S$-invariant. We see that $S\cap H_d=\Spann<p_k^2|1\leq k <a>$. 
	According to \eqref{eq10.pk2} \begin{equation}\label{lapk2}
	\la(p_k^ 2)=\la(h_{k})\la(h_{k+1}) \la(h_0)^{d_0}= \la(h_{k+1})^2 (-1)^{d_0},
	\end{equation} since $\la(h_k)=\la(h_{k+1})$. If $2\nmid d_0$, then $\la(h_k)=\la(h_{k+1})$ is a primitive fourth root of unity and hence 
	$$\la(p_k^ 2)=\la(h_k)^2 (-1)^{d_0}= (-1) (-1)=1.$$ 
	If $2\mid d_0$ we see that $\la(h_k)=\la(h_{k+1})\in \{\pm 1\}$, and hence 
	$$\la(p_k^ 2)=\la(h_k)^2 (-1)^{d_0}= 1 \cdot 1=1.$$ 
	Altogether this implies that $C'S/\ker(\la)$ has a semidirect product structure and hence $\wh \la_1$ extends to $C'S$. Note that $[C'S\cap V_d,v]=\{ 1\}$ and even in general $ [C'S\cap V_d,c_0]=\{ 1\}$.	This implies that $\restr \wh \la_1|{C'S \cap V_d}$ is $c_0$-invariant and extends to $V_\la$. 
	
	\medskip

	It remains to consider the case where $\la(h_0)=1$. Then $c_i^{d_0}\in \spann <h_0>$ since $\cent{\spann <h_0,h_i>}{\spann <c_i>}=\spann <h_0>$ by the Chevalley relations. 
	After some $V_d$-conjugation there exists an extension $\la_1\in\Irr(\wh H_d)$ of $\la$ and some $1\leq k_0\leq a$ such that $\la_1(h_i)=1$ for $1\leq i \leq k_0$ and $\la_1(h_i)=-1$ for $i>k_0$. In this case $V_{d,\la_1}=  C'S$ with $S:=\wh H_d \Spann <p_i| 1\leq i <a \text{ and } i \neq k_0>$ and $C':=\wh H_d \Spann<c_i| 1\leq i\leq a>$.
	Again the group $C'/\spann <h_0>$ is abelian and there exists some extension $\wh \la_1$ to $C'$ of $\la_1$ such that $\wh \la_1(c_i c_{i+1}^{-1})=1$. Then $\wh \la_1$ is $S$-invariant and one can extend the character to some $\tilde \la_1\in \Irr(C'S)$. 
	Now let $x\in V_{d,\la}\setminus V_{d,\la_1}$. If this element exists, $x^2\in V_{d,\la_1}$, since $x$ sends $\la_1$ to another extension of $\la$. 
	Conjugation with $x$ satisfies $\spann <h_0,h_i>^x=\spann <h_0,h_{i'}>$ for some $i$ with $1\leq i \leq k_0$ and $i'$ with $i' > k_0$. Accordingly 
	\begin{equation}\label{orhox}
	\o\rho(x) (\{1,\ldots,k_0\} )=\{k_0+1,\ldots,a\} .
	\end{equation}
	Without loss of generality we can assume $x\in S$ with $x^2\in H_d$. Since $c_i^x\in \{c_1,\ldots , c_a\}$ we see that $\restr \tilde \la_1|{C'S\cap V_d}$ is $x$-invariant. Hence $\restr\tilde \la_1|{C'S\cap V_d}$ is $x$-invariant and $\la$ extends to $V_{d,\la}$. 
	
	In all possible cases the characters of $H_d$ extend to their inertia group in $V_d$.
	
	According to Remark 4.5 of \cite{CS17C} the above statement proves that in type $\tB_l$ the assumption (iii) of Theorem~\ref{thm_4.4} is satisfied with our chosen element $v$. 
\end{proof}

For later use we state the following observations from the above proof explicitly. 
\begin{prop}\label{prop108}
	Assume $2\mid d$, recall $\wh H_d:=\spann <h_1>H_d$. Let $\lambda\in \Irr(H_d)$ and $\wh \lambda\in\Irr(\wh H_d\mid \lambda)$. Then 
	\begin{enumerate}
		\item \label{prop108a}
		If $\lambda(h_0)=-1$ and $V_{d,\lambda}\not \leq N_c$ , then $2\nmid a$. Then $V_{d,\lambda}\cap N_c=V_{\wh \lambda}$.
		\item \label{prop108b}
		If $\lambda(h_0)=1$ and $V_{d,\lambda}\neq V_{d,\wh \lambda}$ , then $2\mid a$.
	\end{enumerate}
\end{prop}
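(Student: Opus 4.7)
My plan is to read off both claims from the explicit Clifford-theoretic description of $V_{d,\lambda}$ obtained in the proof of Theorem~\ref{vgoodB}; no new construction is needed. A useful preliminary observation is that $\wh H_d=\langle h_0,h_1,\ldots ,h_a\rangle$ is abelian with $|\wh H_d:H_d|=2$, so $\lambda$ has exactly two extensions to $\wh H_d$ which differ by the non-trivial character of $\wh H_d/H_d$; and since $h_ih_{i+1}\in H_d$ and $h_i^2=h_0^{d_0}\in H_d$, one has $h_i\equiv h_1\pmod{H_d}$ for every $i$, so that character sends each $h_i$ to $-1$.

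For part (a), I would start from the treatment of the case $\lambda(h_0)=-1$ in the proof of Theorem~\ref{vgoodB}: after a $V_d$-conjugation there is an extension $\lambda_1\in\Irr(\wh H_d\mid\lambda)$ with $V_{d,\lambda_1}\leq N_c$ by \eqref{Vdla1inNc} and $V_{d,\lambda}=V_{d,\lambda_1}\langle c_0\rangle$ by \eqref{VlamndainWB}, where $c_0=c_1\cdots c_a$. Formula \eqref{eq10.3hicj} shows that $c_0$ swaps the two extensions of $\lambda$, so $|V_{d,\lambda}:V_{d,\lambda_1}|=2$; hence $V_{d,\lambda_1}$ is normal in $V_{d,\lambda}$ and coincides with $V_{d,\wh\lambda}$ for either choice of $\wh\lambda$. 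Thus $V_{d,\lambda}\not\leq N_c$ is equivalent to $c_0\notin N_c$. Since $N_c=\bN_c^{vF}$ with $\bN_c=\rho^{-1}(W_{\tD})$, it suffices to count the parity of sign changes of $\rho(c_0)=\ov c_1\cdots\ov c_a$ viewed in $\Sym_{\pm l}$. Reading off the explicit cycle of $\ov c_i$ given in the proof of Theorem~\ref{vgoodB}, each $\ov c_i$ sends exactly one positive element (the last one before the transition to $-1$) to a negative one, so $\rho(c_0)$ has $a$ sign changes modulo $2$. Hence $c_0\in N_c$ iff $2\mid a$, and both assertions in (a) follow.

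For part (b), I would use the setup of the case $\lambda(h_0)=1$ in the proof of Theorem~\ref{vgoodB}: up to $V_d$-conjugation, there exist an extension $\lambda_1$ and an index $1\leq k_0\leq a$ with $\lambda_1(h_i)=1$ for $i\leq k_0$ and $\lambda_1(h_i)=-1$ for $i>k_0$; by the preliminary observation, the other extension $\lambda_1'$ then satisfies $\lambda_1'(h_i)=-\lambda_1(h_i)$ for every $i$. Any $x\in V_{d,\lambda}\setminus V_{d,\wh\lambda}$ must swap $\lambda_1$ and $\lambda_1'$. Passing to $\wh H_d/\langle h_0\rangle$, on which $\lambda$ is trivial, the action of $x$ is given by the permutation $\tilde x\in\Sym_a$ induced by $\ov\rho(x)$ on the $\ov v$-orbits $\calO_1,\ldots,\calO_a$, and $\lambda_1^x=\lambda_1'$ forces $\tilde x(\{1,\ldots,k_0\})=\{k_0+1,\ldots,a\}$ (cf.~\eqref{orhox}). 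Comparing cardinalities yields $a=2k_0$, so $a$ is even.

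The only non-routine ingredient is the sign-change count for the cycles $\ov c_i$ in part (a); everything else amounts to direct bookkeeping inside the proof of Theorem~\ref{vgoodB}.
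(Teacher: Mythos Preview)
Your proposal is correct and follows essentially the same route as the paper's proof: both parts are read off directly from the explicit description of $V_{d,\lambda}$ in the proof of Theorem~\ref{vgoodB}, using \eqref{VlamndainWB}, \eqref{Vdla1inNc} for part~(a) and \eqref{orhox} for part~(b). You supply a bit more detail than the paper (the explicit sign-change count showing $c_0\in N_c\Leftrightarrow 2\mid a$, and the remark that $V_{d,\wh\lambda}=V_{d,\lambda_1}$ for either extension $\wh\lambda$), but the argument is the same.
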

\begin{proof}
	By \eqref{VlamndainWB} and \eqref{Vdla1inNc} we have that $V_{d,\lambda}=V_{d,\la_1}\spann<c_0>$ and $V_{d,\la_1}\leq \bN_c$ , where $c_0=c_1\cdots c_a$. Note that 
	$$ c_0\in \begin{cases}
	\bN_c&\text{if } 2\mid a ,\\
	V_d\setminus	\bN_c &\otw.
	\end{cases}$$
	This implies statement (a). 
	
	For the proof of (b) recall that an element $x\in V_{d,\la}\setminus V_{d,\wh \la}$ satisfies Equation \eqref{orhox}. Such an element can only exist if $a=2k$ and hence $2\mid a$. 
\end{proof}

\subsection{The regular case: Character theory of the relative inertia groups}\label{subsec10D}
Recall $C:=\bT^{vF}$ and $\wt C:=(\wt \bT)^{vF}$. In the following let $W_\xi:=N_\xi/C$ and $W_{\wt\xi}:=N_{\wt \xi}/C$ with $\xi\in\Irr(C)$ and $\wt\xi\in\Irr(\w C)$ (often an extension of $\xi$). 
 
\begin{defi}
	If $Y\unlhd X$ and $\chi\in \Irr(Y)$ we call the group $X_\chi/Y$ the \textit{relative inertia group (of $\chi$ in $X$)}. 
\end{defi}

In this sense assumption (iv) of Theorem~\ref{thm_4.4} considers the character theory of the relative inertia groups for characters $\xi\in\Irr(C)$ in $N$. In particular the characters of the relative inertia groups $N_\xi/C$ are compared with those of $N_{\wt\xi}/C$ where $\wt\xi\in\Irr(\w C\mid \xi)$. In view of describing the groups we introduce first some notation (similar to the one in \cite[5.4]{CS17C}). 

In order to describe the action of $N$ on $C$, respectively $\wt C$, we introduce the groups $C_0$ and $\calG$ together with an action of $\calG$ on $C_0$.

\begin{notation}[The action of $\calG$ on $C_0$]
	For $2\nmid d_0$ we make the following definitions. Let $\epsilon=(-1)^{\frac d {d_0}}$, $C_0:=\Cy_{2(q^{d_0} + \epsilon)}$, the cyclic group of
	order $2(q^{d_0} + \epsilon)$ and $C_{0,2}$ its subgroup of order $2$. 
	The automorphism of $C_0$ given by $\zeta \mapsto \zeta^{q^{d_0}+\epsilon} \zeta^q$ has order $d$. Hence let $\Cy_d\times \Cy_2$ act on $C_0$ by letting a generator of the first factor act by $\zeta \mapsto \zeta^{q^{d_0}+\epsilon} \zeta^q$ and letting the second act by inversion. (Note that this action is actually well-defined.)
	Let $\mathcal G\leq \Cy_d\times \Cy_2$ be defined by 
	$$\mathcal G:= \begin{cases} \Cy_d\times \Cy_2,& \text{if }d=d_0 ,\\
	\Cy_d\times\{ 1\},& \text{if } d=2d_0.	\end{cases}$$
	
	For $2\mid d_0$ and hence $d=2d_0$ we set $\epsilon:=-1$. 
	Let $C_0:=\Cy_2\times \Cy_{q^{d_0}+1}$ be identified with a subgroup of
	$\FF_{q^{2d}}^\times \times \FF_{q^{2d}}^\times$, and $C_{0,1}=\Cy_2\times\{ 1\}$. The map $(a,b) \mapsto (ab^{\frac{q^{d_0}+1} 2} , b^q)$
	defines a group automorphism of $C_0$ of order $d$. Hence there is a well-defined action of $\calG:=\Cy_d$ on $C_0$ thus defined. 
\end{notation}

\begin{rem}[Determining the group $W_{\w\xi}$ ]\label{remwhC}
	Let $\xi\in\Irr(C)$. By definition $\w\bT$ is the central product of a $\bT$ and $\bZ(\wbG)$ over $\bZ(\bG)=\spann<h_0>$. The projections of $\wt C=\wbT^{vF}$ onto the components are $$\wh C:=\{x\in\bT\mid (vF)(x)\in x\bZ(\bG)\}\und \wh Z:=\{x\in\bZ(\wbG)\mid (vF)(x)\in x\bZ(\bG)\}.$$ 
	Clearly $\xi$ extends to some $\wt \xi\in \Irr(\wh C)$ and $\wt \xi$ extends to $\wh C \wh Z$. This extended character can be written as $\wh \xi.\lambda$ for some $\wh\xi\in\Irr(\wh C)$ and $\la\in \Irr(\wh Z)$. Since $W_d:=N/C$ acts trivially on $\wh Z$ and normalizes $\wh C$, the stabilizers of $\wt\xi$, $\wh\xi.\la$ and $\wh \xi$ coincide, i.e., $W_{\w\xi}=W_{\wh \xi}$.
\end{rem}

We will use the groups introduced above to give an analogue of Proposition~5.5 of
\cite{CS17C}. 

\paragraph{The group $\wh C$.} 
Like in \cite[\S\S~5,9]{S10a} we see that 
$$\bT=\bT_1 \cdots \bT_a,$$
where $\bT_k:=\Spann<\hh_{e_i}(t)|  i\in \cO_k, t\in \FF^\times>$ and $ \bT_1 \cap (\bT_2 \cdots \bT_a)$ is $\spann<h_0>$, see Lemma 9.3 of \cite{S10a}. 
Hence $\wh C$ is the central product of the groups $\wh T_k=\{ t\in \bT_k \mid\,\, (vF)(t)\in t \spann<h_0> \}$. There exists an isomorphism $\Xi_k: \wh T_k\ra C_0$ that maps $\spann<h_0>$ to $C_{0,2}$. This implies that there exists an isomorphism \begin{equation}\label{whC}
\Xi: \wh C \lra C_0 \times_{C_{0,2}} \cdots \times_{C_{0,2}}  C_0,
\end{equation} 
where the latter group is the $a$-fold central product of $C_0$ over $C_{0,2}$.

\paragraph{The group $C$.}
Let $\nu$ be the linear character of $C_0$ of order $2$ trivial on the first factor in case $2\mid d_0$, and let $\mu\in \Irr(\wh C)$ the character of order $2$ that corresponds to $\nu.\cdots .\nu$ via $\Xi$. Then $\mu$ corresponds to the composite of the map $\wh C \ra \spann<h_0>$ given by $ t \mapsto (vF)(t)t^{-1}$ and the faithful character of $\spann<h_0>$. Hence $$C=\ker(\mu).$$ 

\paragraph{The characters of $C$ and $\wh C$.}
We choose a $\calG$-transversal $\calR$ of $\Irr(C_0)$. For later purposes we choose $\calR$ such that $\zeta\nu\in\calR$ whenever $\zeta\in \calR$ and $\zeta \nu$ lie in different $\calG$-orbits. Since, via $\Xi_k$ the action of $\spann <c_k> \wh T_k/\wh T_k$ and the action of $\calG$ on $C_0$ coincide, every $\wh \xi\in \Irr(\wh C)$ is $W_d$-conjugate to a character of the form 
\begin{align}\label{IrrwhC}
\xi_1. \cdots.\xi_a \text{ with }\xi_k\in\calR 
\end{align}
(more precisely $\xi_k\in\Irr(\wh T_k)$ corresponds to $\calR$ via $\Xi_k$). 

Since $\wh C$ is abelian $\Irr(C)=\{ \restr \wh \xi| C \mid \wh \xi \in \Irr(\wh C)\}$. Because of 
$|\wh C:C|=2$ every $\xi\in \Irr(C)$ has two extensions $\wh \xi$ and $\wh \xi \mu$ to $\wh C$.

\begin{rem}[The relative inertia groups $W_\xi$ and $W_{\wh \xi}$ for $\xi\in\Irr(C)$ and $\wh \xi\in\Irr(\wh C\mid \xi)$]\label{par1112}
	Assume that $\wh \xi\in\Irr(\wh C\mid \xi)$ is as in (\ref{IrrwhC}). Then by the particular choice of $\calR$ we see that via the identification of $\rho(N)$ with $\calG\wr \Sym_a$ the relative inertia group $W_{\wh \xi}=N_{\wh \xi}/C$ satisfies
	$$W_{\wh\xi}= \prod_{\zeta\in\calR} \calG_\zeta \wr \Sym_{I_\zeta} 
	\text{ with } I_\zeta:=\{i\mid \xi_i=\zeta\}$$ 
	for $\zeta\in\calR$ and $\wh \xi=\xi_1. \cdots . \xi_a$ in (\ref{whC}). 
	Recall that because of $W_{\w\xi}=W_{\wh \xi}$ this also determines $W_{\w \xi}$.
	
	For $\xi:=\restr \wh \xi|C$ the group $W_\xi$ has $W_{\wh \xi}$ as normal subgroup of index at most $2$. For $n\in W_\xi\setminus W_{\wh \xi}$ we have $\wh \xi^ n=\wh \xi \mu$. Such an element exists if and only if for every $\zeta\in\calR$ 
	the equation $|I_\zeta|=|I_{\zeta\nu}|$ holds, whenever $\zeta\nu\in \calR$. 
\end{rem}
\medskip

For the proofs in the non-regular case it is useful to observe the following inclusions. 
Recall $N_c:=\bN_c^{vF}$.
\begin{lem}\label{lemdodd}
	Assume $2\nmid d$. Let $\zeta\in \Irr(C)$ with $\zeta(h_0)=-1$. Then $N_{\zeta} \leq N_c$.\end{lem}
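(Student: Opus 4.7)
The plan is to exploit the wreath product description $W_d = N/C \cong \calG \wr \Sym_a$ established in Section~\ref{subsec10D}, where $\calG = \Cy_d \times \Cy_2$, the $\Cy_d$-factor acts on $C_0 = \Cy_{2(q^d-1)}$ by multiplication by $u := q^d + q - 1 \equiv q \pmod{q^d-1}$, and the $\Cy_2$-factor acts by inversion. Under this identification the subgroup $N_c/C$ is precisely the kernel of the sign homomorphism that sends $((g_1,\ldots,g_a),\tau)$ with $g_k = (f_k,\epsilon_k) \in \Cy_d \times \Cy_2$ to $\sum_k \epsilon_k \bmod 2$: indeed the $\Cy_2$-generator of $\calG_k$ swaps $\cO_k$ with $-\cO_k$ via a product of $d_0 = d$ transpositions $i \leftrightarrow -i$, which is odd in $W/W_\tD$. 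Hence proving $N_\zeta \leq N_c$ reduces to showing $\sum_k \epsilon_k$ is even for every $n \in N_\zeta$.

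Fix an extension $\wh\xi = \xi_1\otimes\cdots\otimes\xi_a \in \Irr(\wh C)$ of $\zeta$ under the isomorphism $\Xi$ of \eqref{whC}. The central-product structure over $\spann<h_0>$ forces all $\xi_k(h_0)$ to be equal, so $\zeta(h_0) = -1$ gives $\xi_k(h_0) = -1$ for every $k$; via $\Irr(C_0) \cong \ZZ/2(q^d-1)$ this is exactly the statement that the index $j_k$ of $\xi_k$ is odd. For $n \in N_\zeta$ one has $\wh\xi^n = \wh\xi \mu^b$ for some $b \in \{0,1\}$. Picking a $\tau$-orbit $O = \{k_1,\ldots,k_r\}$ on $\{1,\ldots,a\}$ and iterating the pointwise condition around $O$ yields $g_O \cdot \xi_{k_1} = \xi_{k_1} \nu^{br}$, where $g_O := \prod_{k \in O} g_k \in \calG$ is well-defined since $\calG$ is abelian. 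Writing $g_O = (f_O,\epsilon_O)$ and reducing the resulting identity modulo $q^d - 1$ (which kills $\nu$) one obtains
\[
(-1)^{\epsilon_O}\, u^{f_O}\, j_{k_1} \equiv j_{k_1} \pmod{q^d-1}.
\]

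The decisive step is ruling out $\epsilon_O = 1$. Setting $M := (q^d-1)/\gcd(j_{k_1},q^d-1)$, the congruence becomes $u^{f_O} \equiv (-1)^{\epsilon_O} \pmod M$. The order of $u$ in $(\ZZ/M)^\times$ divides the odd integer $d$, so the cyclic group $\spann<u>$ has odd order and can contain $-1$ only if $M \leq 2$. The two exceptional cases both contradict $j_{k_1}$ odd: $M = 1$ forces $q^d - 1 \mid j_{k_1}$, hence $j_{k_1} \in \{0, q^d-1\}$ (both even); while $M = 2$ forces $j_{k_1} \in \{(q^d-1)/2, 3(q^d-1)/2\}$, each divisible by $4$ since $q \equiv 1 \pmod 8$ implies $8 \mid q^d - 1$. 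Thus $\epsilon_O = 0$ for every orbit, and summing yields $\sum_k \epsilon_k \equiv \sum_O \epsilon_O \equiv 0 \pmod 2$, which gives $n \in N_c$. The main delicacy lies in fixing the conventions for the $\calG$-action on $C_0$ and the central-product identification of $\wh C$; once those are in place the conclusion is a short $2$-adic calculation driven by the standing assumptions $d$ odd and $8 \mid q-1$.
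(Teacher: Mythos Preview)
Your argument is correct, but it takes a substantially longer and more structural route than the paper's three-line proof. The paper simply observes that the single element $t:=\prod_{i=1}^l \hh_{e_i}(\varpi)=\prod_{k=1}^a h_k$ lies in $C$ (this is where $2\nmid d$ and $8\mid(q-1)$ enter, ensuring each $h_k$ is $vF$-fixed), and that for any $x\in N\setminus N_c$ the Chevalley relations give $[t,x]=h_0$: indeed each sign change $e_i\mapsto -e_j$ contributes a factor $h_{e_j}(\varpi^{-2})=h_{e_j}(-1)=h_0$, and there are an odd number of them. Then $x\in N_\zeta$ would force $\zeta(h_0)=\zeta([t,x])=1$, contradicting the hypothesis.

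By contrast you pass through the full wreath-product description $W_d\cong\calG\wr\Sym_a$, identify $N_c/C$ as the kernel of the $\epsilon$-sign map, lift $\zeta$ to $\wh C$, and then run an orbit-by-orbit $2$-adic analysis on the character indices $j_k$. Each step checks out: the $\Cy_2$-generator of $\calG_k$ is the product of $d$ transpositions $(i,-i)$ (odd, so outside $W_\tD$), the $\Cy_d$-generator is $\ov c_k^2$ (a square, hence in $W_\tD$), the $p_k$ are products of long-root reflections (in $W_\tD$), and your congruence $u^{f_O}\equiv(-1)^{\epsilon_O}\pmod M$ together with $\mathrm{ord}_M(u)\mid d$ odd correctly forces $M\le 2$, which you then exclude via the parity of $j_{k_1}$. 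The paper's commutator trick is shorter and avoids the character-index bookkeeping entirely; your approach has the merit of making explicit how the obstruction lives in the $\Cy_2$-components of the wreath product, which is closer in spirit to the later Proposition~\ref{lemWxi}, but for this lemma alone it is considerable overkill.
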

\begin{proof}
	Recall that we assume $8|(q-1)$ following Lemma \ref{simplifications}. This implies that $h_k\in C$ whenever $2\nmid d$. Any $x\in N\setminus N_c$  satisfies $[(\prod_{i=1}^l \hh_{e_i}(\varpi)),x]= h_0$ by the Chevalley relations. Since $\zeta(h_0)=-1$ this implies the inclusion $N_\zeta\leq N_c$.
\end{proof}

\begin{prop}\label{lemWxi}
	Assume $2\mid d$. Let $\zeta\in\Irr(C)$, and $\wh \zeta\in\Irr(\wh C\mid \zeta)$. 
	\begin{enumerate}
		\item \label{prop108a_2}
		If $\zeta(h_0)=-1$ and $N_{\zeta}\not \leq N_c$ , then $2\nmid a$ \label{lemWxia}
		and $N_{\zeta}\cap N_c=N_{\wh \zeta}$. 
		\item \label{lemWxib}\label{eqlemWxi}
		If $\zeta(h_0)=1$ and $N_{\zeta} \neq N_{\wh \zeta}$, then $a$ is even. 
	\end{enumerate}
\end{prop}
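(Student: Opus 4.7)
The proof exploits the same pattern as Proposition~\ref{prop108} but one level up: where that result compares $V_{d,\lambda}$ with $V_{\wh\lambda}$ and $N_c\cap V_d$, here we compare $N_\zeta$ with $N_{\wh\zeta}$ and $N_c$. The natural bridge is the factorisation $N=V_dC$ (from $\rho(V_d)=W_d$, Lemma~\ref{lemrhoVd_B}) together with the obvious inclusion $C\leq\bT\leq\bN_c$, so that $N_\zeta=V_{d,\zeta}C$ and ``$N_\zeta\leq N_c$'' is equivalent to ``$V_{d,\zeta}\leq N_c$''.

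For part (a), I would set $\lambda:=\restr\zeta|{H_d}\in\Irr(H_d)$. Since $V_d$ normalises $H_d$, any $v\in V_d$ stabilising $\zeta$ stabilises $\lambda$, so $V_{d,\zeta}\leq V_{d,\lambda}$. The hypothesis $N_\zeta\not\leq N_c$ transfers to $V_{d,\lambda}\not\leq N_c$, and Proposition~\ref{prop108}(a) yields $2\nmid a$. For the equality $N_\zeta\cap N_c=N_{\wh\zeta}$, both subgroups have index at most $2$ in $N_\zeta$; I would prove they agree by showing $N_{\wh\zeta}\leq N_c$ on the one hand and exhibiting an element in $N_\zeta\setminus(N_c\cup N_{\wh\zeta})$ on the other. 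The inclusion $N_{\wh\zeta}\leq N_c$ would be read off the product description $W_{\wh\zeta}=\prod_{\zeta_0\in\calR}\calG_{\zeta_0}\wr\Sym_{I_{\zeta_0}}$ of Remark~\ref{par1112}: the condition $\xi_k(h_0)=-1$, combined with the explicit $\calG$-action on $C_0$ recalled before Remark~\ref{remwhC} and the hypothesis $8\mid q-1$, forces each $\calG_{\xi_k}$ to sit inside the kernel of the sign map $\calG\to\Cy_2$ sending the preimage of $\ov c_1$ to $-1$. The witnessing element is modelled on $c_0=c_1\cdots c_a$ from the proof of Theorem~\ref{vgoodB}: its image in $W_d$ has $a$ (odd, by the first part) sign-changes while its action on each $\wh T_k\cong C_0$ by the generator of $\calG$ can be arranged to send $\wh\zeta$ to $\wh\zeta\mu$.

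For part (b), I would apply Remark~\ref{par1112} directly: existence of $n\in N_\zeta\setminus N_{\wh\zeta}$ forces $|I_{\zeta_0}|=|I_{\zeta_0\nu}|$ for every $\zeta_0\in\calR$ such that $\zeta_0\nu\in\calR$. The hypothesis $\zeta(h_0)=1$ forces every component $\xi_k$ in the decomposition \eqref{IrrwhC} of $\wh\zeta$ to lie in $\Irr(C_0/C_{0,2})$. The key arithmetic step is then: \emph{for any $\zeta_0\in\Irr(C_0/C_{0,2})$ one has $\zeta_0\nu\notin\calG\zeta_0$}, so that $\zeta_0\nu\in\calR$ automatically. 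Granted this, the non-empty index sets pair off across $\nu$ and $a=\sum_{\{\zeta_0,\zeta_0\nu\}}2|I_{\zeta_0}|$ is even.

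The hard part is the arithmetic input common to both halves: that the sign character of $\calG$ does not factor through the stabiliser of a character of $\Irr(C_0)$ non-trivial on $C_{0,2}$, and dually that $\nu$ lies outside every $\calG$-orbit inside $\Irr(C_0/C_{0,2})$. Both amount to a $2$-adic valuation comparison in the cyclic group $\Irr(C_0)$, split by the parity of $d_0$, and genuinely use $8\mid q-1$ from Lemma~\ref{simplifications}. A secondary subtlety is that $V_d$ does not normalise $\wh H_d$, so one cannot reduce part~(a)'s equality or part~(b) wholesale to Proposition~\ref{prop108} by restriction to $\wh H_d$; the argument must be carried out directly inside $\wh C$ via the parametrisation of Remark~\ref{par1112}.
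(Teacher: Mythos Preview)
Your approach is correct but takes a genuinely different route from the paper's.

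The paper's key observation is that, under the standing hypothesis $8\mid q-1$ and $2\mid d$, one has $|C|=(q^{d_0}+1)^a$ with $v_2(q^{d_0}+1)=1$, so $H_d$ and $\wh H_d$ are exactly the Sylow $2$-subgroups of $C$ and $\wh C$. Hence a character $\zeta$ of $2$-power order is determined by its restriction $\lambda=\restr\zeta|{H_d}$, and likewise $\wh\zeta$ by $\wh\lambda=\restr{\wh\zeta}|{\wh H_d}$; this gives $N_\zeta=CV_{d,\lambda}$ and $N_{\wh\zeta}=CV_{d,\wh\lambda}$, after which Proposition~\ref{prop108} applies verbatim. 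For general $\zeta$ one writes $\zeta=\zeta_2\zeta_{2'}$ and observes that an odd-order character has a unique odd-order extension to $\wh C$, whence $N_{\zeta_{2'}}=N_{\wh\zeta_{2'}}$, reducing everything to the $2$-power case. This is short and modular: the arithmetic is already absorbed into Proposition~\ref{prop108}.

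Your route via Remark~\ref{par1112} and the explicit $\calG$-action on $\Irr(C_0)$ is also valid. The arithmetic claims you need --- that $\calG_{\xi_k}\leq\ker(\text{sign})$ whenever $\xi_k$ is nontrivial on $C_{0,2}$, and that $\zeta_0\nu\notin\calG\zeta_0$ for $\zeta_0\in\Irr(C_0/C_{0,2})$ --- are correct and reduce to the same $2$-adic valuation computation ($v_2(q^{d_0}+1)=1$ versus $v_2(q^j-1)\geq 3$). Your argument for the equality in (a) can be shortened: once $N_{\wh\zeta}\leq N_c$ is known, the hypothesis $N_\zeta\not\leq N_c$ forces $[N_\zeta:N_\zeta\cap N_c]=2$, so $N_{\wh\zeta}\leq N_\zeta\cap N_c$ together with $[N_\zeta:N_{\wh\zeta}]\leq 2$ gives equality; no witnessing element is needed.

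One correction: your closing remark that ``$V_d$ does not normalise $\wh H_d$'' is wrong. Equations~\eqref{eq10.3hicj} and~\eqref{eq10.4} show that the generators $c_j,p_k$ of $V_d$ send each $h_i$ into $\wh H_d=\langle h_0,h_1,\dots,h_a\rangle$, so $V_d$ \emph{does} normalise $\wh H_d$. This is precisely what makes the paper's reduction to Proposition~\ref{prop108} work, and is why the paper's proof is shorter than yours.
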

\begin{proof}
	In (a),  $\la:=\restr \zeta|{H_d}$ satisfies $N_\zeta\leq V_{d,\lambda}$ and hence $V_{d,\lambda}\not\leq N_c$. By Proposition \ref{prop108a} this implies $2\mid a$.
	
	For proving the required equation $N_{\zeta}\cap N_c=N_{\wh \zeta}$ stated in (a) we compute for more general $\zeta$ the groups $N_\zeta$ and $N_{\wh \zeta}$. 
	
	If $2\nmid o(\zeta)$, then $\wh \zeta$ can be chosen to satisfy $2\nmid o(\wh \zeta)$. The other extension of $\zeta$ to $\wh C$ has even order. Hence $\wh \zeta$ is $N_\zeta$-invariant. This implies $W_{\wh \zeta}=W_\zeta$ in this case. 
	
	Assume now that $\zeta\in\Irr(C)$ satisfies $\zeta(h_0)=-1$ and that $o(\zeta)$ is a power of $2$. Note that  $|C|= (q^{d_0}+1)^a$ because of $2\mid d$. In this case $H_d$ and $\wh H_d$ are the Sylow $2$-subgroups of $C$ and $\wh C$ since we assume $4\mid (q-1)$ by Lemma \ref{simplifications}. The characters $\lambda=\restr \zeta| {H_d}$ and $\wh \la:=\restr \wh \zeta| {\wh H_d}$ satisfy 
	\begin{equation}\label{eqneq}
	CV_{d,\lambda}= N_{\zeta} \und CV_{d,\lambda_1}= N_{\wh \zeta}. 
	\end{equation}
	The equality $V_{d,\la}\cap N_c=V_{\wh\la}$ from Proposition \ref{prop108a}
	implies 
	\begin{equation}
	N_{\wh \zeta}=CV_{d,\wh \la}= C (V_{d,\la}\cap N_c)=CV_{d,\la}\cap N_c=N_{\zeta}\cap N_c.
	\end{equation}
	
	Any $\zeta\in\Irr(C)$ with $\zeta(h_0)=-1$ can be written uniquely as a product $\zeta_2 \zeta_{2'}$ where $\zeta_2,\zeta_{2'}\in\Irr(C)$ such that the order of $\zeta_2$ is a power of $2$ and $2\nmid o(\zeta_{2'})$. We denote by $\wh \zeta_2,\wh\zeta_{2'}\in\Irr(\wh C)$ extensions of $\zeta_{2}$ and $\zeta_{2'}$ with $2\nmid o(\wh \zeta_{2'})$. Then $N_{\zeta}=N_{\zeta_2} \cap N_{\zeta_{2'}}$ and $N_{\wh \zeta}=N_{\wh \zeta_2} \cap N_{\wh \zeta_{2'}}$. By the above considerations we get $N_{\zeta_{2'}}=N_{\wh\zeta_{2'}}$ and 
	\begin{equation}
	N_{\wh \zeta_2}=N_{\zeta_2}\cap N_c.
	\end{equation}
	Together this implies the required equality $N_{\wh \zeta}=N_{\zeta}\cap N_c$.
	
	For the proof of part (b) let $\zeta\in\Irr(C)$ and $\wh \zeta\in \Irr(\wh C\mid \zeta)$ with $\zeta(h_0)=1$ and $N_\zeta\neq N_{\wh \zeta}$. Again $\zeta=\zeta_2 \zeta_{2'}$ where $\zeta_2,\zeta_{2'}\in\Irr(C)$ such that the order of $\zeta_2$ is a power of $2$ and $2\nmid o(\zeta_{2'})$. We denote again by $\wh \zeta_2,\wh\zeta_{2'}\in\Irr(\wh C)$ extensions of $\zeta_{2}$ and $\zeta_{2'}$ with $2\nmid o(\wh \zeta_{2'})$. Then $N_{\zeta}=N_{\zeta_2} \cap N_{\zeta_{2'}}$ and $N_{\wh \zeta}=N_{\wh \zeta_2} \cap N_{\wh \zeta_{2'}}$. By the above considerations $N_{\zeta_{2'}}=N_{\wh\zeta_{2'}}$. Hence $N_\zeta\neq N_{\wh \zeta}$ implies 
	$V_{d,\wh \la}\neq  V_{d,\la}$ where $\lambda=\restr \zeta| {H_d}$ and $\wh \la:=\restr \wh \zeta| {\wh H_d}$ following the equations \eqref{eqneq}. Then Proposition \ref{prop108b} implies $2\mid a$ as required. 
\end{proof}
For later use we add another comment on the value of a certain commutator. 
\begin{rem}\label{remNzetaNwtzeta}
	For $\wh C$ from Remark \ref{remwhC}, every $t\in \wh C$ and $n\in N$ the commutator satisfies $[t,n]\in C$. If $\zeta\in\Irr(C)$, $\wh \zeta\in\Irr(\wh C\mid \zeta)$ and $n\in N_\zeta$, then the value of $\zeta([t,n])$ is given by 
	\begin{equation*}
	\zeta([t,n])=\begin{cases}\,\,\,\,	1& \text{if } n\in N_{\wh \zeta},\\
	-1& \otw.	\end{cases} \end{equation*}
	This follows from straightforward calculations.
\end{rem}


For verifying assumption (iv) of Theorem~\ref{thm_4.4} about the Clifford theory for $W_{\wh \xi}\unlhd \NNN_{W_d}(W_{\wh \xi})$ we prove a more general
statement on extendibility. 
\begin{prop} \label{prop_maxext_wreathprod}
	Let $A$ be an abelian group and $n\geq 1$. Let $X_0\leq A^n$ such that $X_0$ is the direct product of its $n$ projections.
	Let $K$ be a Young subgroup $K$ of $\Sym_n$ normalizing $X_0$ in the action of $\Sym_n$ on $A^n$ by permutation. Denote $X:=X_0\rtimes K\leq A\wr \Sym_n$. Then maximal extendibility holds with respect to 
	$$X\unlhd \NNN_{A\wr \Sym_n}(X).$$
	
\end{prop}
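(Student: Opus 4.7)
The plan is to combine Clifford theory with respect to the abelian normal subgroup $X_0\unlhd X$ with an explicit description of $N:=\NNN_{A\wr\Sym_n}(X)$. Write $O_1,\ldots,O_r$ for the $K$-orbits on $\{1,\ldots,n\}$ and $B_s\leq A$ for the common projection of $X_0$ onto any coordinate $i\in O_s$. A direct computation in $A\wr\Sym_n = A^n\rtimes\Sym_n$ yields $N = D\rtimes L$, where
\[
D := \{a\in A^n : a_i\equiv a_j\pmod{B_s}\text{ whenever }i,j\in O_s\}
\]
is abelian and contains $X_0$, and $L\leq\Sym_n$ consists of permutations $\sigma$ with $\sigma(O_s)=O_{s'}$, $|O_s|=|O_{s'}|$ and $B_s=B_{s'}$. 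For $d\in D$ and $k\in K$, the defining condition on $D$ forces $d-k(d)\in X_0$, so conjugation by $d$ on $X$ coincides with conjugation by its $X_0$-part; thus $D$ acts on $X$ by inner automorphisms, $D\leq N_\chi$ for every $\chi\in\Irr(X)$, and $D$ fixes every irreducible character of $X$.

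Fix $\chi\in\Irr(X)$ and a linear constituent $\theta\in\Irr(X_0\mid\chi)$. The splitting $X_\theta=X_0\rtimes K_\theta$ gives the canonical extension $\tilde\theta\in\Irr(X_\theta)$ with $\tilde\theta|_{K_\theta}=1$, and $\chi=\Ind_{X_\theta}^{X}(\tilde\theta\cdot\hat\eta)$ for some $\eta\in\Irr(K_\theta)$. The crucial step is to extend $\theta$ to $N_\theta:=D\rtimes L_\theta$, where $L_\theta$ is the stabilizer of $\theta$ in $L$. The decomposition $D=\prod_s D_s$ with $D_s=\{a\in A^{O_s}:a_i\equiv a_j\pmod{B_s}\}$ identifies $D_s$ with the central product $B_s^{O_s}\cdot\Delta_s(A)$ amalgamated over the diagonal copy $\Delta_s(B_s)$ of $B_s$. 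Extensions of $\theta_s:=\theta|_{B_s^{O_s}}$ to $D_s$ therefore correspond to extensions of the character $\prod_{i\in O_s}\theta_{s,i}$ of $B_s$ to $A$, which exist as $A$ is abelian; furthermore $\Sym_{O_s}$ acts trivially on $\Delta_s(A)$, so every such extension is automatically $\Sym_{O_s}$-invariant. Grouping $K$-orbits according to their type (shared $|O_s|$, $B_s$, and $\Sym_{O_s}$-orbit of $\theta_s$) and selecting the same extension across orbits of one type yields an $L_\theta$-invariant extension $\tilde\theta_D\in\Irr(D)$; the formula $\tilde\theta_{N_\theta}(dl):=\tilde\theta_D(d)$ for $d\in D$, $l\in L_\theta$ then extends $\theta$ to $N_\theta$.

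To assemble an extension of $\chi$, let $L_{\theta,\chi}:=L_\theta\cap N_\chi$, so that $(N_\chi)_\theta=D\rtimes L_{\theta,\chi}$; since $D$ acts trivially on $\Irr(X_0)$ the $N_\chi$-orbit of $\theta$ equals $K\cdot\theta$, whence $N_\chi=X\cdot(N_\chi)_\theta$ and $X\cap(N_\chi)_\theta=X_\theta$. Applying Gallagher's theorem to the restriction of $\tilde\theta_{N_\theta}$ to $(N_\chi)_\theta$ shows that extensions of $\tilde\theta\cdot\hat\eta$ from $X_\theta$ to $(N_\chi)_\theta$ are of the form $\tilde\theta_{(N_\chi)_\theta}\cdot\hat{\eta'}$ for $\eta'\in\Irr(L_{\theta,\chi})$ with $\eta'|_{K_\theta}=\eta$. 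Since $K_\theta$ is a Young subgroup of $\Sym_n$ normal in $L_{\theta,\chi}\leq\NNN_{\Sym_n}(K_\theta)$ and $\eta$ is $L_{\theta,\chi}$-invariant, the standard character theory of wreath products $\Sym_m\wr\Sym_r$ (via the tensor construction on $V^{\otimes r}$) supplies such an $\eta'$. Mackey's formula then identifies $\Ind_{(N_\chi)_\theta}^{N_\chi}(\tilde\theta_{(N_\chi)_\theta}\cdot\hat{\eta'})$ with an irreducible character of $N_\chi$ restricting to $\chi$. The main obstacle is the $L_\theta$-invariant extension $\tilde\theta_D$; the orbit-wise product decomposition of $D$ together with the triviality of $\Sym_{O_s}$ on the diagonal $\Delta_s(A)$ resolves it.
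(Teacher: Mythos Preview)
Your proof is correct and follows essentially the same strategy as the paper's: Clifford theory with respect to the abelian normal subgroup $X_0$, an explicit extension of the linear character $\theta$ to a larger abelian overgroup, and then the standard extendibility of characters of Young subgroups to wreath products $\Sym_m\wr\Sym_r$ for the $\eta$-part.

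The tactical differences are minor but worth noting. The paper first reduces to the case $X_0=A_1^n$ for a single $A_1\leq A$, then extends the constituent $\lambda\in\Irr(A_1^n)$ all the way to $\hat\lambda\in\Irr(A^n)$ with $(\Sym_n)_\lambda=(\Sym_n)_{\hat\lambda}$, and uses the unique extension with non-negative integral values on $K_\lambda$ (which for linear $\lambda$ is exactly your canonical extension trivial on $K_\theta$). It then overshoots to $A^n\rtimes\NNN_{\Sym_n}(K)_\chi$, a group that need not normalize $X$, and finally restricts. You instead keep the general $X_0$, extend $\theta$ only to the smaller group $D$ with an explicit $L_\theta$-invariance argument via the diagonal decomposition $D_s=B_s^{O_s}\cdot\Delta_s(A)$, and work inside $N_\chi$ throughout. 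Your route avoids the reduction step and the overshoot, at the cost of the orbit-by-type bookkeeping for $\tilde\theta_D$; both are equally valid and of comparable length.
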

\begin{proof} Without loss of generality we assume $X_0=A_1^{n_1}\times\cdots  \times A_r^{n_r}$ for pairwise distinct subgroups $A_i\leq A$ with $n_i\geq 1$ and $n_1+\cdots  +{n_r}=n$. The group $K$ also splits analogously as a direct product of Young subgroups $K_i$ of $\Sym_{n_i}$. Then $\NNN_{A\wr \Sym_n}(X)$ is the direct product of groups $\NNN_{
		A\wr \Sym_{n_i}}(X)=\NNN_{A\wr \Sym_{n_i}}(A_i\wr K_i)$. So our claim clearly reduces to $r=1$, which we now assume.
	
	Group-theoretic considerations prove that $((a_1,\ldots,a_n),\si)$ is an element of $\NNN_{A\wr \Sym_n}(X)$ if and only if $\si\in\NNN_{\Sym_n}(K)$ and $ a_iA_1=a_jA_1$ whenever $i,j$ are in the same $K$-orbit. 
	Let $\chi\in \Irr(X)$. 
	By the action of the elements of $\NNN_{A\wr \Sym_n}(X)$ on $X$ we see that $\NNN_{A\wr \Sym_n}(X)_\chi \leq A^n\rtimes \NNN_{\Sym_n}(K)_\chi$. 
	Now we construct an extension of $\chi$ to $A^n\rtimes \NNN_{\Sym_n}(K)_\chi$ , although the latter may not normalize $X$. 
	
	Let $\la\in\Irr({A_1^n}\mid \chi)$ and $\la_i\in\Irr(A_1)$ such that $\la=\la_1\times \cdots \times \la_n$. There exists some extension $\wh\la\in\Irr(A^n)$ of $\la$ such that $(\Sym_n)_{\la}=(\Sym_n)_{\wh\la}$ , since $A$ is abelian. It is a well-known fact that $\la$ has a unique extension $\phi\in\Irr((A_1)^n\rtimes	K_{\la})$ such that $\restr\phi|{K_\la}$ has only non-negative integral values, see for instance \cite[33.1]{B06}. Analogously $\wh \la$ has an extension $\wh\phi\in\Irr(A^n\rtimes (\Sym_n)_{\la})$ such that $\restr\wh\phi|{K_\la}$ has only non-negative integral values. Because of the uniqueness, $\wh \phi$ is an extension of $\phi$. 
	
	By Clifford theory $\chi=(\phi\eta)^{X}$ for some $\eta\in\Irr(K_\la)$. By the uniqueness of $\phi$ we see that $\NNN_{\Sym_n}(K)_\chi=K \NNN_{\Sym_n}(K,K_\la)_{\phi\eta}=K \NNN_{\Sym_n}(K,K_\la)_{\eta} $. 
	The character $(\wh \phi \eta)^{A^n X}\in \Irr(A^n X)$ is an extension
	of $\chi$. 
	
	The group $K_\la$ is a Young subgroup and $\wh K_1:= \NNN_{\Sym_n}(K,K_\la)_\eta$ is
	a direct product of wreath products with summands of $K_\la$ as base subgroups. Hence $\eta$ has an extension
	$\w\eta\in\Irr(\wh K_1)$. Then the character $(\wh\phi_{A^n\rtimes\wh K}
	\w\eta)^{A^n\rtimes (\wh K_1 K)}$ is an extension of $\chi$. Since
	$\NNN_{A^n\rtimes \Sym_n}(X)_\chi\leq A^n\rtimes \NNN_{\Sym_n}(K)_\chi=A^n\rtimes (\wh K_1 K)$ this
	proves our claim.
\end{proof}

The character theory for the groups $W_\xi$ and $W_{\w\xi }$ satisfies the following. 
\begin{prop}\label{prop_Wxi_regB}
	For every $\xi\in\Irr(C)$, $\wh \xi\in\Irr(\w C\mid \xi)$, $K:=\NNN_{W_d}(W_\xi ,W_{\w\xi })$ and $\eta_0\in\Irr(W_{\w\xi })$, any $\eta\in\Irr(W_\xi \mid \eta_0)$ is $K_{\eta_0}$-invariant.
\end{prop}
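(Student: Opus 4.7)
The plan is to reduce to three subcases based on Remark~\ref{remwhC} (which identifies $W_{\w\xi}$ with $W_{\wh\xi}$) and Remark~\ref{par1112} (which gives $W_{\wh\xi}\unlhd W_\xi$ of index at most $2$ and displays $W_d = \calG\wr\Sym_a$, $W_{\wh\xi} = \prod_{\zeta\in\calR}\calG_\zeta\wr \Sym_{I_\zeta}$). When $W_\xi = W_{\wh\xi}$, the statement is trivial since $\Irr(W_\xi\mid\eta_0) = \{\eta_0\}$. When $|W_\xi:W_{\wh\xi}| = 2$ and $\eta_0$ is not $W_\xi$-stable, the unique element of $\Irr(W_\xi\mid\eta_0)$ is the induced character $\eta_0^{W_\xi}$, whose $K_{\eta_0}$-invariance follows from the identity $(\eta_0^{W_\xi})^k = (\eta_0^k)^{W_\xi^k} = \eta_0^{W_\xi}$ (valid because $k\in K_{\eta_0}$ normalizes $W_\xi$ and fixes $\eta_0$).

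The main case is $|W_\xi:W_{\wh\xi}| = 2$ with $\eta_0$ being $W_\xi$-stable; then Gallagher's theorem gives $\Irr(W_\xi\mid\eta_0) = \{\eta_+, \eta_-\}$ with $\eta_- = \eta_+\epsilon$ for $\epsilon$ the non-trivial linear character of $W_\xi/W_{\wh\xi}\cong\Cy_2$. The key is to apply Proposition~\ref{prop_maxext_wreathprod} to the inclusion $W_{\wh\xi}\unlhd \NNN_{W_d}(W_{\wh\xi})$. The hypotheses are met by taking $A = \calG$ (abelian), $X_0 = \prod_{\zeta\in\calR}\calG_\zeta^{I_\zeta}$ (a direct product of its projections in $\calG^a$) and Young subgroup $K = \prod_{\zeta\in\calR}\Sym_{I_\zeta}$, so that $W_{\wh\xi} = X_0\rtimes K$ sits inside $W_d = \calG\wr\Sym_a$ exactly as required. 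Proposition~\ref{prop_maxext_wreathprod} then furnishes an extension $\Psi\in\Irr(\NNN_{W_d}(W_{\wh\xi})_{\eta_0})$ of $\eta_0$.

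Since $K_{\eta_0}\leq \NNN_{W_d}(W_{\wh\xi})_{\eta_0}$ (as $K$ normalizes $W_{\wh\xi}$) and $W_\xi\leq \NNN_{W_d}(W_{\wh\xi})_{\eta_0}$ (as $\eta_0$ is $W_\xi$-stable), defining $\eta_+ := \restr\Psi|{W_\xi}$ gives an irreducible extension of $\eta_0$ to $W_\xi$ by Frobenius reciprocity. For every $k\in K_{\eta_0}$ and $x\in W_\xi$ we get $\eta_+^k(x) = \Psi(k^{-1}xk) = \Psi(x) = \eta_+(x)$ simply because $\Psi$ is a class function on the larger group where $x$ and $k^{-1}xk$ are conjugate, so $\eta_+^k = \eta_+$. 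For the other extension, $\eta_-^k = \eta_+^k\epsilon^k = \eta_+\epsilon = \eta_-$, using that $\epsilon$ is automatically $k$-invariant, being the unique non-trivial character of $\Cy_2$.

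The conceptual heart of the argument is the reduction to maximal extendibility on the wreath-product inclusion $W_{\wh\xi}\unlhd \NNN_{W_d}(W_{\wh\xi})$; once Proposition~\ref{prop_maxext_wreathprod} is invoked, invariance becomes an elementary class-function observation. I do not anticipate a serious obstacle beyond verifying the straightforward matching of $W_d$ and $W_{\wh\xi}$ with the wreath-product data needed to apply that proposition, and handling the sign character $\epsilon$ in the Gallagher case, both of which are immediate from the explicit description of the relative inertia groups in Remark~\ref{par1112}.
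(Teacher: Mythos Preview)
Your proof is correct and follows essentially the same strategy as the paper: both apply Proposition~\ref{prop_maxext_wreathprod} to the inclusion $W_{\wh\xi}\unlhd \NNN_{W_d}(W_{\wh\xi})$ (with $A=\calG$ and $X=W_{\wh\xi}$ as described in Remark~\ref{par1112}) to obtain an extension of $\eta_0$ reaching $K_{\eta_0}$, and then use $|W_\xi:W_{\wh\xi}|\leq 2$ to conclude that every constituent of $\eta_0^{W_\xi}$ is $K_{\eta_0}$-invariant. The paper compresses your three subcases into a single sentence, but the content is the same.
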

\begin{proof}
	The groups $W_d$ is a wreath product of the form $\calG\wr\Sym_a$ while
	the structure of  $W_{\wt \xi}$ is given in Remark \ref{par1112}. Since $W_{\wt \xi}$ satisfies the requirements for $X$ in Proposition~\ref{prop_maxext_wreathprod}, any $\eta_0\in \Irr(W_{\wt \xi})$ extends to its inertia group in $\NNN_{W_d}(W_{\wt \xi})$. In particular there exists some extension to $W_{\xi,\eta_0}$ that extends to $K_{\eta_0}$. Since $|W_\xi:W_{\wh \xi}|\leq 2$ all characters in $\Irr(W_\xi\mid \eta_0)$ are $K_{\eta_0}$-invariant. 	
\end{proof}

\begin{cor}\label{Adreg}
Properties \AA d and \BB d from Definition~\ref{A(d)} hold for $\GG_{\mathrm{sc}}^F$ of type $\tB_l$ when the centralizer of the Sylow $d$-torus is a torus and $q$ is a square.
\end{cor}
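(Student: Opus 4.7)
My plan is to verify the five assumptions (i)--(v) of Theorem~\ref{thm_4.4} for the element $v=v_0^{2l/d}$ constructed at the start of Section~\ref{sec10A}, using the ingredients already established in the preceding subsections. Since type $\tB_l$ has $\phi_0=1$ and no nontrivial graph automorphisms, the group $E$ is cyclic by Lemma~\ref{simplifications}, so the ``non-cyclic'' clauses (iii.2) and (iv.2) of Theorem~\ref{thm_4.4} become vacuous and the task simplifies considerably.

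First, I would invoke Lemma~\ref{lemrhoVd_B} to get (i) and (ii) simultaneously: the regularity of $\rho(v)$ and the equality $\rho(V_d)=W_d$. Next, Theorem~\ref{vgoodB} proves maximal extendibility with respect to $H_d\unlhd V_d$ and, through Remark~4.5 of \cite{CS17C}, supplies the $\wh V_d$-equivariant extension map required for (iii.1); since (iii.2) is vacuous, (iii) holds in full. Assumption (v) is empty because $\bG$ is of type $\tB$, not $\tD$.

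For (iv) only the first clause (iv.1) is substantive: for every $\eta_0\in\Irr(W_{\w\xi})$ one must produce some $\eta\in\Irr(W_\xi\mid\eta_0)$ that is $\wh K_{\eta_0}$-invariant. Proposition~\ref{prop_Wxi_regB} already shows that \emph{every} such $\eta$ is $K_{\eta_0}$-invariant, so what remains is to control the contribution to $\wh K$ coming from $\wh W_d/W_d$. Since $\wh F$ acts on $\GG_{\mathrm{sc}}^{F_0^{em}}$ in exactly the same way as $F$, the coset of $v\wh F$ is central in $\wh W_d$ and so acts trivially on every character of $W_\xi$; only the field-automorphism piece generated by $\wh F_0$ demands analysis. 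Here the hypothesis that $q$ is a square enters: writing $F=F_1^2$ with $F_1=F_0^{m/2}$, the element $\wh F_0^{m/2}$ normalizes the chosen data and acts on $W_d\cong\calG\wr\Sym_a$ via the Galois action on $\calG$, which can be made compatible with the transversal $\calR$ (and with the decomposition of $W_{\wh\xi}=\prod_\zeta\calG_\zeta\wr\Sym_{I_\zeta}$ from Remark~\ref{par1112}). Combined with the wreath-product maximal extendibility of Proposition~\ref{prop_maxext_wreathprod}, this upgrades $K_{\eta_0}$-invariance to $\wh K_{\eta_0}$-invariance.

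The main obstacle will be this last upgrade: one must check that the field-automorphism coset in $\wh W_d/W_d$ permutes both the transversal $\calR$ and the inertia pair $(W_\xi,W_{\wh\xi})$ consistently, so that the $K_{\eta_0}$-invariance of Proposition~\ref{prop_Wxi_regB} does indeed promote to $\wh K_{\eta_0}$-invariance. Assuming this, Theorem~\ref{thm_4.4} applies and delivers both \AA d and \BB d, which is the desired conclusion of the corollary.
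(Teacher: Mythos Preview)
Your overall strategy is right and matches the paper's approach: verify the hypotheses of Theorem~\ref{thm_4.4} using Lemma~\ref{lemrhoVd_B}, Theorem~\ref{vgoodB}, and Proposition~\ref{prop_Wxi_regB}, noting that (iii.2), (iv.2), and (v) are vacuous. The gap is in your treatment of (iv.1), specifically the ``upgrade'' from $K_{\eta_0}$-invariance to $\wh K_{\eta_0}$-invariance.

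Your assertion that $\wh F_0^{m/2}$ ``acts on $W_d\cong\calG\wr\Sym_a$ via the Galois action on $\calG$'' is incorrect. The endomorphism $F_0$ fixes every $\nn_\al(1)$, hence fixes $V$ elementwise; since $\rho(V_d)=W_d$ and $N=CV_d$, conjugation by $\wh F_0$ induces the \emph{trivial} action on $W_d=N/C$. (The field automorphism does act nontrivially on $C$ and on $\Irr(C)$, which is perhaps what you had in mind, but that is a different action.) Consequently every element of $\wh W_d$ acts on $W_\xi$ and on $\Irr(W_\xi)$ through its image in $W_d$, so $\wh K_{\eta_0}$ and $K_{\eta_0}$ have the same action on $\Irr(W_\xi)$, and Proposition~\ref{prop_Wxi_regB} already gives the $\wh K_{\eta_0}$-invariance you need. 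This is exactly what the paper's one-line proof records: ``$E$ is cyclic in this type and acts trivially on $W_d$.'' The hypothesis that $q$ is a square is not used at this step at all; it is the standing assumption (ensuring $8\mid q-1$) under which the preceding subsections were developed, and the case where $q$ is not a square is handled separately in Section~5.E.

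A minor point: $E$ is cyclic directly from Definition~\ref{DefE} (type $\tB$ has no graph automorphisms), not from Lemma~\ref{simplifications}.
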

\begin{proof} We apply Theorem~\ref{thm_4.4}. Assumptions (i) and (ii) are given by Lemma~\ref{lemrhoVd_B}. Assumptions (iii) and (iv) are ensured by Theorem~\ref{vgoodB} and the above Proposition~\ref{prop_Wxi_regB}, knowing that $E$ is cyclic in this type and acts trivially on $W_d$. This gives our claim. 
\end{proof}

\subsection{Considerations for the non-regular case}
In the next step we extend the above results to arbitrary integers $d\geq 1$ and complete the proof of properties \AA d and \BB d from Definition~\ref{A(d)} by checking all assumptions of Theorem~\ref{thm_loc_gen} above.

We keep the same notation (see Definition~\ref{VhatE}).
Recall that $m$ is the integer such that $p^m=q$, $e=|V|$ and $\wh E=\Cy_{em}$. We define an action of $\wh E$ on $\GG_{\mathrm{sc}}^{F_0^{em}}$ by letting the generating element $\wh F_0$ act on 
$\GG_{\mathrm{sc}}^{F_0^{em}}$ by $F_0$. Let $\wh F:=\wh F_0^m$.

Again we will fix below an element $v\in V$. We then denote by $\bS$ a Sylow $d$-torus of $(\bT, vF)$, its centralizer $\bC= \Cent_{\bG}(\bS)$ and the finite groups
\begin{align}
\begin{split}
C:= \bC^{vF}=\Cent_{\bG}(\bS)^{vF}, &\quad \wt C:= \Cent_{\wbG}(\bS)^{vF}, \\
N:= \NNN_{\bG}(\bS)^{vF},& \quad \wt N:= \NNN_{\wbG}(\bS)^{vF}, \quad \text{ and } \quad  \wh N:=\Cent_{(\bG \wh E)^{F_0^{em}}}(v\wh F)_\bS
\end{split}
\label{defCNwhN} \end{align}
Recall that the assumptions of Theorem~\ref{thm_loc_gen} essentially require to verify the following:
\begin{asslist}
	\item $\bS$ is also a Sylow $d$-torus of $(\bG,vF)$;
	\item a $\w C$-transversal $\calT\subseteq \Irr(C)$ with certain properties can be chosen;
	\item there exists some $\wh N$-equivariant extension map with respect to $C\unlhd N$ for $\calT$;
	\item for every $\xi\in\calT$ and $\wt\xi\in\Irr(\wt C\mid \xi)$ every character $\eta_0\in\Irr(W_{\wt\xi})$ has a $K_{\eta_0}$-invariant extension to $W_{\xi,\eta_0}$ , where $W_\xi:=N_\xi/C$, $W_{\wt \xi}:=N_{\wt \xi}/C$, $W_d:=N/C$ and $K:=\NNN_{W_d}(W_\xi,W_{\wt \xi})$. 
\end{asslist}

 The element $v\in V$ is chosen as follows. The positive integer $d$ determines $d_0$ as in \eqref{defd0}. Let $l'$ be the maximal multiple of $d_0$ with $l'\leq l$. For $a:=\frac {l'}{d_0}$ let 
$$v :=\big(\n_{\al_1}(1)\cdots \n_{\al_{l'}}(1) \big)^ a \in V.$$ 
According to \cite[3.2]{S10b} the Sylow $d$-torus $\bS$ of $(\bT,vF)$ is one of $(\bG,vF)$ and hence assumption (i) of Theorem~\ref{thm_loc_gen} holds.

Since we study the non-regular case, we assume that $l'<l$.

In order to describe the structure of the groups from \eqref{defCNwhN}
we use the following root systems and groups: Let 
\begin{equation}\label{defPhi1}
\Phi_1:=\Phi\cap \Spann<e_i| 1\leq i\leq l'>\und \Phi_2:=\Phi\cap \Spann<e_i| l'< i\leq l>.
\end{equation}
Additionally let 
\begin{equation}\label{T1G2}
\bT_1:=\Spann<\hh_{\al}(k)| \al\in\Phi_1, k \in \FF^\times > \und \GG_2:=\Spann<\xx_\al(t)| \al\in\Phi_2 , t\in\FF >.
\end{equation} 

Then, see \cite[2.2]{S10b}, $\bC =\bT_1\GG_2$\ , a central product over $ \Z(\GG_2)=\Z(\bG)=\spann<h_0>$. This last point can be seen by the fact that $\GG_2$ is a semi-simple group of type $\tB_{l-l'}$ with center generated by $\h_{e_{l'+1}}(-1)$ but this equals $h_0=\h_{e_{1}}(-1)$ since they are conjugate but the latter is central.  

\begin{lem}
	\begin{enumerate}
		\item \label{centC} $\Cent_{\wt C}(C) C=\wt C$.
		\item Any $t\in C\setminus (T_1G_2)$ induces a diagonal automorphism on $G_2$. 
		\item Every $\xi\in\Irr(C)$ satisfies $$\wt N_\xi=\wt C _\xi N_\xi \quad \und \quad (\wt N \wh N)_{\xi^N}=\wt N _{\xi^N}\wh N_{\xi^N},$$
		in particular $\calT:=\Irr(C)$ satisfies Assumption (ii) of Theorem~\ref{thm_loc_gen}.
	\end{enumerate}
\end{lem}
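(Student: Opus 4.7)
The plan is to exploit the central product structure $\bC=\bT_1\GG_2$ together with the fact that $\bT_1$ centralises $\GG_2$. This centralisation follows from the Chevalley formula $\h_\al(k)\xx_\beta(t)\h_\al(k)^{-1}=\xx_\beta(k^{\langle\beta,\al^\vee\rangle}t)$ combined with the orthogonality of $\Phi_1\subset\spann<e_i\mid i\leq l'>$ and $\Phi_2\subset\spann<e_i\mid i>l'>$ in the standard pairing of type $\tB_l$. Moreover $\bT_1$ is connected and $vF$-stable (the element $v$ and $F_0$ preserve $\Phi_1$ by the explicit shape of $v$), so Lang's theorem is available inside $\bT_1$.

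For (a), given $\wt t\in\wt C$ I would write $\wt t=gz$ with $g\in\bC$, $z\in\Z(\wbG)$, using $\wbG=\bG\Z(\wbG)$. The $vF$-invariance of $\wt t$ forces $g^{-1}vF(g)=zvF(z)^{-1}\in\bC\cap\Z(\wbG)=\Z(\bG)=\spann<h_0>$. Lang's theorem inside $\bT_1$ then yields $g_0\in\bT_1$ with $g_0^{-1}vF(g_0)=g^{-1}vF(g)$, whence $\wt t=(gg_0^{-1})(g_0z)\in C\cdot\Cent_{\wt C}(C)$: the first factor lies in $C$, while $g_0\in\bT_1$ centralises $\bC\supseteq C$ and $z$ is central in $\wbG$. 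For (b), writing $t=t_1g_2$ with $t_1\in\bT_1$, $g_2\in\GG_2$, the $vF$-invariance forces $t_1^{-1}vF(t_1)=g_2\,vF(g_2)^{-1}\in\bT_1\cap\GG_2=\spann<h_0>$, and the hypothesis $t\notin T_1G_2$ means this common value is $h_0$. Since $\bT_1$ centralises $\GG_2$, conjugation by $t$ on $G_2$ coincides with conjugation by $g_2$, and $g_2$ induces a diagonal automorphism because its image in $\GG_2/\Z(\GG_2)$ is $vF$-fixed.

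For (c), part (a) gives $\wt C_\xi=\wt C$ for every $\xi\in\Irr(C)$, since $\wt C$ acts trivially on $\Irr(C)$. An analogous Lang argument applied to $\NNN_{\wbG}(\bS)=\NNN_\bG(\bS)\cdot\Z(\wbG)$ gives $\wt N=\wt C\cdot N$. Any $y\in\wt N$ thus decomposes as $y=\wt c\, n'$ with $\wt c\in\wt C$, $n'\in N$, yielding $\xi^y=\xi^{n'}\in\xi^N$; so $\wt N$ preserves every $N$-orbit in $\Irr(C)$, whence $\wt N_{\xi^N}=\wt N$ and $\wt N_\xi=\wt C N_\xi=\wt C_\xi N_\xi$. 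For the second equation, any $x=yz\in(\wt N\wh N)_{\xi^N}$ with $y\in\wt N$, $z\in\wh N$ automatically has $y\in\wt N=\wt N_{\xi^N}$, hence $z=y^{-1}x\in\wh N\cap(\wt N\wh N)_{\xi^N}=\wh N_{\xi^N}$. The $\wt C$-transversal and $\wh N$-stability of $\calT:=\Irr(C)$ are immediate from the triviality of the $\wt C$-action on $\Irr(C)$.

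The main subtlety will be the refined choice of $g_0$ in (a): one must take $g_0$ inside $\bT_1$ rather than merely inside $\bC$, so that $g_0z$ lands in $\Cent_{\wt C}(C)$ and not just in $\wt C$. This refinement -- which uses that $\bT_1$ centralises $\GG_2$ -- is what makes (a) genuinely stronger than the formal factorisation $\wt N=\wt C\cdot N$, and it propagates to (c) via the triviality of the $\wt C$-action on $\Irr(C)$.
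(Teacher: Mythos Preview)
Your proof is correct and follows essentially the same approach as the paper's: both argue via Lang's theorem using that $\bT_1=\Z(\bC)$ is connected (the paper packages this as $\Z(\w\bC)\cap\bC=\Z(\bC)=\bT_1$ connected and invokes the standard Lang consequence for products, while you carry out the element-by-element decomposition explicitly), and both derive (c) from the resulting triviality of the $\wt C$-action on $\Irr(C)$ together with $\wt N=\wt C N$. Your treatment of (b) is more detailed than the paper's one-line appeal to $\GG_2=[\bC,\bC]$, but the content is the same.
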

\begin{proof} (a) Since $\Z(\w \bC)^{vF}\leq \cent{\w C}{C}$, it suffices to prove that $\Z(\w \bC)^{vF} \bC^{vF}=\w \bC^{vF}$. We have $\w\bC =\Z(\w \bC)\bC$, so by a classical consequence of Lang's theorem, it suffices to prove that $\Z(\w \bC)\cap\bC$ is connected. We have indeed $\Z(\w \bC)\cap\bC = \Z(\bC)= \bT_1$ since $\bC=\bT_1\GG_2$ is a central product where $\Z(\GG_2)=\spann<h_0>\leq \bT_1$ as recalled above.
	
Part (b) is clear from the fact that $\GG_2={[\bC ,\bC]}$.

 (c)	Part (a) implies $\wt C_\xi=\wt C$ for every $\xi\in\Irr(C)$. 
	Since $\wt N=\wt C N$ (see proof of Theorem~\ref{thm_4.4}) we see that $\wt N_\xi=\wt C N_\xi$. By part (a), $\wt N$ stabilizes $\xi^N$ and hence 
	\begin{equation*}
	(\wt N \wh N)_{\xi^N}=\wt N \wh N_{\xi^N}=\wt N _{\xi^N}\wh N_{\xi^N}.  
	\end{equation*}	
	All requirements in (ii) of Theorem~\ref{thm_loc_gen} are then satisfied for $\calT =\Irr(C)$.
\end{proof}

We keep $d$ a non-regular number of $(\bG,F)$ and recall from Lemma~\ref{simplifications} that in order to check condition \AA d, one may assume the following. \medskip

\centerline{
	\textbf{Assumption:}  $\Out(\GG_{\mathrm{sc}}^F)$ is non-cyclic, in particular $2\mid |E|$ and hence $8\mid~(q-1)$. }
 
 \medskip

 Before going into the character theoretic details we have to clarify the structure of $C$ and $N$.

\paragraph{A Sylow $d$-torus $\bS$ and associated groups.}

 We denote by $W_{\tD_{l'}}$ the subgroup of $W$ that is generated by the reflections along the long roots of $\Phi_1$. Then $|W_{\Phi_1}:W_{\tD_{l'}}|=2$ and $\rho(W_{\tD_{l'}})$ is the subgroup of $\Sym_{\pm l'}$ containing all permutations of $\Sym_{\pm l'}$ with an even number of positive integers mapped to negative ones, see also Notation~\ref{n_typB}. 

The next proposition focuses on the structure of the groups involved in assumption (iii) of Theorem~\ref{thm_loc_gen}. We use in a decisive way condition \Ainfty\ for $G_2$ , a group of type $\tB$ of rank smaller than the one of $\bG$, which holds thanks to Theorem~\ref{gloBCE}. 

\begin{prop}\label{prop_1013}
	For the root systems $\Phi_1$ from \eqref{defPhi1} let 
	\begin{equation}\label{defN1Nc}
	\bN_1:=\Spann<\nn_{\al}(k)| \al\in\Phi_1, k \in \FF^\times >,\,
	\bN_c:= \rho^{-1}(W_{\tD_{l'}})\cap \bN_1,\,
	N_1:=\bN_1^{vF} \und 
	N_c:= \bN_c^{vF}.
	\end{equation}
	Then these groups satisfy the following:
	\begin{enumerate}
		\item \label{prop_strucNc}
		$N=N_1C$ and $N_c=\cent{N_1}{\bGzwei}$.
		\item \label{prop103b}
		Let $x_1\in N_1\setminus N_c$ , $\varpi\in\FF$ a primitive fourth root of unity, $h:=\prod_{i=l'+1}^l \hh_{e_i} (\varpi)\in (\bT\cap\bGzwei)^F$.
		Then \begin{equation}\label{1017b}
		\cent{N}{G_2}=\spann<N_c,x_1h>\und  N= C \cent N {G_2}.
		\end{equation}
		\item \label{prop103c}
		Let $G_2:=\GG_2^{vF}$. Then $G_2=\GG_2^{h'F}$ where 
		\begin{equation}\label{defh'}
		h':=\begin{cases}1_{\bGzwei} & \text{ if } v\in N_c,\\
		h & \otw.\end{cases}.
		\end{equation}
		\item Let $g\in \bT\cap\bGzwei$ with $gF(g)^{-1}=h'$. Then  
		$$ \iota_2: \GG_2^F \rtimes \spann<\wh F_0> \rightarrow \GG_2^{h'F} {\spann <t''\wh F_0>}\text{ with }y \mapsto {}^{g^{}}y$$ is an isomorphism
		with $\iota_2(\wh F)=h'\wh F$, where $t'':=gF_0(g)^{-1}\in \bT\cap \bGzwei$ normalizes $\GG_2^{h'F}$. 
		\item 
		Let $T_1:=\bT_1^{vF}$ and $C_0:=T_1G_2$. There exists some $t'\in C$ such that $t't''\in \cent{\bT}{G_2}$. It satisfies 		
		\begin{equation}\label{deft'}
		t' \in \begin{cases}
		C_0 & \text{if } [\wh F_0,h']=1_\bGzwei,\\
		C\setminus C_0 &\otw .\end{cases}
		\end{equation}
		\item \label{existTransv}
		Let $\theta \in \Irr(G_2)$ then 
		$$(N\wh E)_\theta= C_{\theta} \left (N_1\spann< t' \wh F_0>\right )_{\theta}=C_{\theta} N_1\spann< t' \wh F_0>_{\theta},$$ 
		for $t'\in C$ defined above.
	\end{enumerate}
\end{prop}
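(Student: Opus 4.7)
For (a), the algebraic identity $\NNN_\bG(\bS) = \bN_1 \bC$ follows because the roots vanishing on $\bS$ are exactly those in $\Phi_2$, so the relative Weyl group $\NNN_\bG(\bS)/\bC$ coincides with $W_{\Phi_1}$, onto which $\bN_1$ surjects via $\rho$. Since $\bN_1 \cap \bC = \bT_1$ is connected, Lang's theorem yields $N = N_1 C$ upon passage to $vF$-fixed points. The identity $N_c = \cent{N_1}{\bGzwei}$ then follows from Chevalley commutator formulas in type $\tB$: the generators $\nn_\al(1)$ for long roots $\al \in \Phi_1$ centralize all root subgroups of $\Phi_2$ by orthogonality of root strings, while the short-root generators $\nn_{e_i}(1)$ for $i \leq l'$ introduce a nontrivial spin cocycle on the short-root subgroups $\xx_{e_j}(\FF)$ for $j > l'$; since $W_{\tD_{l'}}$ is precisely the subgroup of $W_{\Phi_1}$ generated by reflections along long roots, the claim follows.

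For (b), because $\bGzwei$ is simply-connected simple of type $\tB_{l-l'}$ it admits no non-trivial graph automorphism, so any conjugation on $\bGzwei$ by an element of $\bG$ is inner. For $x_1 \in \bN_1 \setminus \bN_c$, an explicit computation on the short-root generators identifies its inner action with conjugation by $h$, so $x_1 h \in \cent{\bG}{\bGzwei}$; combined with $|N_1 : N_c| = 2$ and Zariski density of $G_2$ in $\bGzwei$, this gives $\cent{N}{G_2} = \spann<N_c, x_1 h>$, and $N = C \cent{N}{G_2}$ follows from $N = N_1 C$ together with the observation that $N_1 \subseteq C \spann<N_c, x_1 h>$. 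Part (c) is then immediate: if $v \in N_c$ then $v$ centralizes $\bGzwei$, so $\GG_2^{vF} = \GG_2^F$; otherwise $vh^{-1} \in \cent{\bG}{\bGzwei}$ by (b), giving $\GG_2^{vF} = \GG_2^{hF}$. For parts (d)--(e), the element $g \in \bT \cap \bGzwei$ with $g F(g)^{-1} = h'$ exists by Lang's theorem applied to the connected subtorus $\bT \cap \bGzwei$, and $\iota_2 \colon y \mapsto {}^{g} y$ is the standard Lang--Steinberg isomorphism $\GG_2^F \xrightarrow{\sim} \GG_2^{h'F}$; the identity $g \wh F_0 g^{-1} = t'' \wh F_0$ with $t'' := g F_0(g)^{-1}$ transfers the field automorphism, and $t''$ normalizes $\GG_2^{h'F}$ because $\wh F_0$-stability of $\GG_2^F$ transports under $\iota_2$ to $t''\wh F_0$-stability of $\GG_2^{h'F}$. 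The existence of $t' \in C$ with $t't'' \in \cent{\bT}{G_2}$ reduces via Lang's theorem to a cocycle computation in the quotient torus $\bT/(\bT \cap \bGzwei)$; the case distinction on whether $t' \in C_0$ is governed exactly by whether $\wh F_0$ and $h'$ commute, through the character $\mu$ cutting $C$ out of $\wh C$.

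Finally, for (f), the subgroup $\cent{N}{G_2}$ stabilizes every $\theta \in \Irr(G_2)$, so combining with part (b) one obtains $N\wh E = C \cdot N_1 \spann<t' \wh F_0>$. The first equality $(N\wh E)_\theta = C_\theta (N_1 \spann<t'\wh F_0>)_\theta$ then follows from a Clifford-type decomposition using that $C \unlhd N\wh E$ and that $C$-conjugation on $\theta$ factors through outer diagonal automorphisms of $G_2$, while $N_1 \spann<t'\wh F_0>$ maps to the complementary diagonal-plus-field part. For the second equality $(N_1 \spann<t'\wh F_0>)_\theta = N_1 \spann<t'\wh F_0>_\theta$, note that via $\iota_2$ the action of $N_1$ on $G_2$ factors through $\Dia(G_2)$ by part (b), while $\spann<t'\wh F_0>$ acts through the field part $E$ by construction of $t'$; the claim is then exactly property \Ainfty\ of Definition~\ref{A(d)} for $\GG_2^F$, which holds by Theorem~\ref{gloBCE} since $\GG_2$ is simply-connected simple of type $\tB_{l-l'}$ with $l - l' < l$. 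The main obstacle is part (f): the subtle point is to verify that the twisting by $\iota_2$ and the choice of compensating torus element $t'$ preserve the product decomposition $\Out(G_2) = \Dia(G_2) \rtimes E$, so that the stabilizer decomposition pulled back from $\Out(G_2)$ lifts to the sought factorization inside $N\wh E$.
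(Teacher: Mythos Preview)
Your treatment of parts (a)--(d) and the overall strategy for (f) match the paper's, but part (e) has a genuine gap and part (f) has a mislocated appeal to \Ainfty.

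In (e), the existence of $t'\in C\cap\bT$ with $t't''\in\cent{\bT}{G_2}$ does not reduce to a Lang computation in $\bT/(\bT\cap\bGzwei)$: that quotient is isomorphic to $\bT_1/\Z(\bGzwei)$, which is the wrong direction. What is needed is surjectivity of $\bT^{vF}$ onto the adjoint torus $(\bT\cap\bGzwei)/\Z(\bGzwei)$ of $G_2$, and this follows because $\Z(\bC)=\bT_1$ is connected (so $\bT^{vF}\to(\bT/\bT_1)^{vF}$ is onto by Lang). Your reference to the character $\mu$ for the case distinction is also misplaced: the paper instead computes directly that $s'':=\iota_2^{-1}(t'')$ satisfies $F(s'')s''^{-1}=z:=F_0(h')h'^{-1}\in\Z(\bG)$, so $t''$ (and hence $t'$) induces the diagonal outer automorphism of $G_2$ corresponding to $z$; then $t'\in C_0$ iff this automorphism is inner iff $z=1$ iff $[\wh F_0,h']=1$.

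In (f), part (b) shows that $N_1\setminus N_c$ acts on $G_2$ as conjugation by $h\in G_2$, i.e.\ by \emph{inner} automorphisms, not diagonal ones. So $N_1$ already stabilizes every $\theta$, and the second equality $(N_1\spann<t'\wh F_0>)_\theta=N_1\spann<t'\wh F_0>_\theta$ is trivial. The nontrivial content is the first equality, which the paper reduces to $(C\spann<t'\wh F_0>)_\theta=C_\theta\spann<t'\wh F_0>_\theta$ and then, after transport by $\iota_2^{-1}$, to $(C^g\spann<\wh F_0>)_{\theta'}=C^g_{\theta'}\spann<\wh F_0>_{\theta'}$ for $\theta'\in\Irr(\GG_2^F)$; this is where \Ainfty\ for $\GG_2^F$ is invoked. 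Finally, when $l-l'=1$ the group $\GG_2^F\cong\SL_2(q)$ is of type $\tA_1$, so Theorem~\ref{gloBCE} does not apply and one must cite \cite[Thm.~4.1]{CS17A} instead.
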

\begin{proof}
	Parts (a) and (b) follow from straightforward computations, see \cite[10.2.5, 10.2.6]{S07} and \cite[Lem.~6.3]{S10b}. Note that the hypothesis $4\mid (q-1)$ implies $h\in \GG_2^{vF}$. 
	
	Part (c) is then a direct consequence of part (a).
	
	The existence of $g$ is clear from the connectedness of $\bT\cap\GG_2$. The map $\iota_2$ is clearly an isomorphism with $^g(\GG_2^F)=\GG_2^{gF(g^{-1})F}$ and $\iota_2(\wh F_0)=[g,\wh F_0]\wh F_0$.
	
Note that $F_0(h)h^{-1}=\prod_{i=l'+1}^l \hh_{e_i} ((-1)^{p-1\over 2})\in \Z(\bG)$ and therefore 
 $z:= F_0(h')h'{}^{-1} \in \Z(\bG)$. Denote $s'':=g^{-1}  F_0(g)=\iota_2^{-1}(t'')$.
	We have 
	$F(s'')s''^{-1}=(F(t''^{h'})t''^{-1})^{-1}$ and
	\begin{align*}
	F(s'')s''^{-1}&= F(g^{-1} ) F_0( F(g) ) F_0(g)^{-1} g\\
	&=F(g^{-1}) F_0(h'^{-1}) g=  F(g^{-1}) z h'^{-1} g=
	z F(g)^{-1} (F(g) g^{-1}) g=z.
	\end{align*}
	Accordingly $s''$ induces a diagonal automorphism on $\GG_2^F$ corresponding to $z$ by the relevant Lang map, and $t''$ induces a diagonal automorphism on $\GG_2^{h'F}$ corresponding to $z$.
	
For (e), observe that since $\bC $ has connected center (see proof of Proposition~\ref{prop_1013}(a)), $\bC^{vF}$, and therefore $\bT^{vF}$ induces all diagonal automorphisms of $\GG_2^{vF}=[\bC,\bC]^{vF}$. This ensures the existence of $t'$. The  claim that $t'\in C_0$ if and only if $z=1$ comes from the fact just proved that $t''$, hence $t'$, induces a diagonal outer automorphism on $\GG_2^{h'F}$ corresponding to $z$, while $t'\in C_0$ if and only if the induced automorphism is interior.

	Concerning (f), one has $N\wh E=C\cent{N}{G_2}\wh E$ by (\ref{1017b}). We can content ourselves with checking $(C \spann<t'\wh F_0>)_\theta =C_\theta\spann<t'\wh F_0>_\theta $. Applying $\iota_2^{-1}$ this also reads $$(C^g \spann<\wh F_0>)_{\theta '}= C^g_{\theta '} \spann<\wh F_0>_{\theta '}$$
for any $\theta '\in\Irr(\GG_2^F)$. Observe that $\Phi_2$ is a root system of type $\tB_{l-l'}$ if $l-l'\geq 2$, and of type $\tA_1$ otherwise. The group $\GG_2^{F}$ is a quasi-simple group of type $\tB_{l-l'}$ or isomorphic to $\SL_2(q)$, respectively, on which $F_0$ and $F$ act as described in the setting of Sect.\,2.A. On that group, $C^g$ acts by diagonal automorphisms, since $C$ does so on $G_2={}^g\GG_2^{F}$. Our statement then follows from Theorem~\ref{gloBCE} and \cite[Thm.~4.1]{CS17A}.
\end{proof}

The next proposition ensures assumption~(iii) of Theorem~\ref{thm_loc_gen}. Recall the definitions of $C$, $N$, and $\wh N$ given in \eqref{defCNwhN}.
\begin{prop}\label{propextmap} 
	There exists an $\wh N$-equivariant extension map for $C\unlhd N$. 
\end{prop}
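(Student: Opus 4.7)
The strategy is to exploit the central product decomposition $C = T_1 \cdot G_2$ over $\spann<h_0>$ coming from $\bC = \bT_1 \bGzwei$, together with the factorization $N = C \cdot N_1 = C \cdot \cent N {G_2}$ from Proposition~\ref{prop_1013}. This separates the construction of the extension map into a $T_1$-part (which falls into the regular case already treated) and a $G_2$-part (where one can invoke property \Ainfty\ already known for smaller type-$\tB$ groups).

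First I would parametrize each $\xi \in \Irr(C)$ by a pair $(\tau,\theta)$ with $\tau \in \Irr(T_1)$ and $\theta \in \Irr(G_2)$ agreeing on the central subgroup $\spann<h_0>$, and identify $N_\xi$ accordingly. For the $T_1$-component, the pair $(\bT_1, vF)$ with normalizer $\bN_1$ is precisely the regular-case setup of Sect.\,\ref{sec10A} attached to the root subsystem $\Phi_1$ of type $\tB_{l'}$, so Theorem~\ref{vgoodB}, applied in rank $l'$, furnishes an extension map $\Lambda_1$ for $T_1 \unlhd N_1$ which is equivariant under $\wh N \cap \bN_1\wh E$.

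For the $G_2$-component, Proposition~\ref{prop_1013}(c)--(f) identifies $G_2$ with $\GG_2^{h'F}$ for a simply connected quasi-simple group $\GG_2$ of type $\tB_{l-l'}$ (or of type $\tA_1$ if $l-l' = 1$), and describes the outer stabilizer $(N\wh E)_\theta$ as $C_\theta \cdot (N_1 \spann<t'\wh F_0>)_\theta$. Since $\GG_2$ has strictly smaller rank than $\bG$, property \Ainfty\ holds for $\GG_2^F$ (by Theorem~\ref{gloBCE} applied inductively in the rank, with the base case taken care of by Lemma~\ref{simplifications}). Together with the stabilizer description, this yields, after replacing $\theta$ by a suitable $C$-conjugate, an extension of $\theta$ to its stabilizer in $G_2 \cdot \cent N {G_2}$ that can be chosen consistently across $\wh N$-orbits.

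Finally the two partial extensions are glued. Since $T_1 \cap G_2 = \spann<h_0>$ is central and the central characters of $\Lambda_1(\tau)$ and of the chosen extension of $\theta$ both equal $\tau(h_0) = \theta(h_0)/\theta(1)$ on $\spann<h_0>$, their product is well defined on the subgroup $N_\xi = C_\xi \cdot (N_1)_\tau \cap \cent N {G_2}_\theta$ and provides the required extension of $\xi$. The $\wh N$-equivariance of the resulting map will then follow from the equivariance of $\Lambda_1$ together with the equivariant character of the choices made on the $G_2$-side. The main obstacle lies precisely in this last step: consistently choosing the extensions of the $\theta$'s across orbits so that the central-character compatibility with $\Lambda_1(\tau)$ \emph{and} the global $\wh N$-equivariance hold simultaneously. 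Overcoming this will rely on the uniqueness portion of \Ainfty, on the explicit description of $\cent N {G_2}$ via the element $x_1 h$ of Proposition~\ref{prop_1013}(b), and on absorbing the ambiguity in the $G_2$-extension by a linear character of $N_\xi/C$ pulled back from $\Irr(\wh C/C)$.
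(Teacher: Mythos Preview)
Your broad strategy---decompose via $C_0=T_1G_2$, handle the $T_1$-side by the regular case (Theorem~\ref{vgoodB}) applied to $\Phi_1$, and glue---is the paper's strategy too. But you have misidentified where the real work lies, and your gluing step as written does not make sense.

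The point you are missing is that extending $\theta$ is \emph{not} the issue: since $\cent N{G_2}$ centralizes $G_2$, any extension of $\theta$ to $G_2\cdot\cent N{G_2}$ is simply $\theta$ tensored with a linear character of $\cent N{G_2}$. Property \Ainfty\ for $\GG_2$ is used in the paper (Proposition~\ref{prop_1013}(f)) only to get the stabilizer shape $(N\wh E)_\theta=C_\theta N_1\spann<t'\wh F_0>_\theta$, not to extend $\theta$. The genuine obstacle is extending $\zeta$ (not $\theta$) from $T_1$ to $\cent N{G_2}_{\xi_0}$. Your map $\Lambda_1(\zeta)$ lives on $N_{1,\zeta}$, but $N_{1,\zeta}$ need not lie inside $\cent N{G_2}=\spann<N_c,x_1h>$; when $N_{1,\zeta}\not\le N_c$ you must pass from an element $x\in N_{1,\zeta}\setminus N_c$ to $xh$ (with $h\in G_2$) and \emph{choose} the value $\wt\psi(xh)$ by hand, depending on $\zeta(h_0)$. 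Your product $\Lambda_1(\tau)\cdot(\text{extension of }\theta)$ cannot be formed on $N_{1,\tau}\cdot G_2$ because that is not a central product when $N_{1,\tau}\not\le N_c$.

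After making this choice, the $(N\wh E)_{\xi_0}$-invariance of $\wt\psi$ is not automatic: it reduces to showing $\wt\psi([\wh F_0',h])\,\wt\psi([t,x])=1$ for $nt\wh F_0'\in(N\wh E)_{\xi_0}$, and this requires a case analysis (on the parity of $d$, the parity of $a$, and the value $\zeta(h_0)$) that feeds on Lemma~\ref{lemdodd}, Proposition~\ref{lemWxi}, Remark~\ref{remNzetaNwtzeta}, and the dichotomy for $t'$ in \eqref{deft'}. None of this is visible in your outline, and the ``uniqueness portion of \Ainfty'' you invoke does not exist.
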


\begin{lem}\label{lem_extmap_Bnonreg_neu}
	Let $\wh F_0'\in \wh E$, $n\in N_1$ and $\zeta\in\Irr(T_1)$ such that $\zeta^{n\wh F_0'}=\zeta$. Then $\zeta$ has an $n\wh F_0'$-invariant extension $\wt \zeta\in\Irr(N_{1,\zeta})$. 
	In particular $\wt\zeta([n\wh F_0',y])=1$ for every $y\in N_{1,\zeta}$. 
\end{lem}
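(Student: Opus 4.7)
The plan is to obtain the extension in two stages: first establishing maximal extendibility for the normal inclusion $T_1\unlhd N_1$ by reducing to a regular subcase already treated in Theorem~\ref{vgoodB}, then refining the choice of extension to achieve $n\wh F_0'$-invariance.

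For the first stage, I would observe that $l'=a d_0$ is a multiple of $d_0$, so $d$ is a regular number for the parabolic Weyl subgroup $W_{\Phi_1}\leq W$ of type $\tB_{l'}$, with $\rho(v)$ a $d$-regular element there. I would then apply Theorem~\ref{vgoodB} not to $\bG$ itself but to the simply-connected subgroup $\spann<\xx_\al(\FF)\mid\al\in\Phi_1>$ of type $\tB_{l'}$, obtaining maximal extendibility for $H_1^{vF}\unlhd V_1^{vF}$, where $H_1:=V\cap\bT_1$ and $V_1:=V\cap\bN_1$. Lifting from the Tits extended Weyl group to the full normalizer (Remark~4.5 of~\cite{CS17C}) then yields maximal extendibility for $T_1\unlhd N_1$ and produces some extension $\wt\zeta_0\in\Irr(N_{1,\zeta})$ of $\zeta$.

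For the second stage, since $\zeta$ is $n\wh F_0'$-invariant, $n\wh F_0'$ permutes the set of extensions of $\zeta$ to $N_{1,\zeta}$, which is a torsor under the group $A$ of linear characters of $N_{1,\zeta}$ trivial on $T_1$. Writing $\wt\zeta_0^{n\wh F_0'}=\wt\zeta_0\cdot\alpha$ for the unique $\alpha\in A$, the task is to produce $\eta\in A$ with $\alpha=\eta\cdot(\eta^{n\wh F_0'})^{-1}$, so that $\wt\zeta:=\wt\zeta_0\cdot\eta$ becomes invariant. I would verify this by tracking the explicit construction in the proof of Theorem~\ref{vgoodB}, where the extension is pinned down by normalizations on generators of the Tits extended Weyl group such as the $p_k$'s and products $c_i^2 c_{i+1}^{-2}$. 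Because $\wh F_0$ fixes each $\n_\al(1)\in V$, it fixes every $p_k$ and permutes the $c_i$ compatibly with the action of $\rho(n)$. Hence the normalizing conditions used in Theorem~\ref{vgoodB} are preserved under $n\wh F_0'$, yielding a canonical choice of extension that is automatically $n\wh F_0'$-invariant.

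The ``in particular'' clause is then immediate: for $y\in N_{1,\zeta}$ the commutator $[n\wh F_0',y]=(n\wh F_0')(y)\,y^{-1}$ remains in $N_{1,\zeta}$, and the $n\wh F_0'$-invariance of $\wt\zeta$ gives $\wt\zeta([n\wh F_0',y])=\wt\zeta^{n\wh F_0'}(y)\,\wt\zeta(y)^{-1}=1$. The main obstacle will be Step~2: although $\wh F_0$ acts trivially on $V$, its action on the torus elements of $T_1$ is non-trivial (by $\h_\al(t)\mapsto\h_\al(t^p)$), so one must verify that the normalizations of Theorem~\ref{vgoodB} remain compatible with this twisted action. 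Should the direct approach fail to produce a canonical invariant choice, a short Tate-cohomology computation showing the vanishing of the obstruction class in $H^1(\spann<n\wh F_0'>,A)$---facilitated by the cyclicity of $\spann<\wh F_0>$---should close the argument.
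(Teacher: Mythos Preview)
Your Stage~1 is exactly the reduction the paper makes: $d$ is regular for the subgroup $\mathbf G_1$ of type $\tB_{l'}$, so Theorem~\ref{vgoodB} applies there. The divergence is in Stage~2, and it is unnecessary. The machinery you invoke in Stage~1 actually delivers more than you record: Remark~4.5 of \cite{CS17C} (together with the proof of \cite[Thm.~4.4]{CS17C}) upgrades maximal extendibility for $H_d\unlhd V_d$ to an $N_1\rtimes\wh E$-\emph{equivariant} extension map $\Lambda_1$ for $T_1\unlhd N_1$, not merely to the existence of some extension. Equivariance means $\Lambda_1(\zeta)^{n\wh F_0'}=\Lambda_1(\zeta^{n\wh F_0'})=\Lambda_1(\zeta)$, so the invariance is immediate and your entire Stage~2 can be deleted. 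This is precisely what the paper does in two lines.

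Your proposed Stage~2 arguments, as written, are also incomplete. Option~(a), tracking the explicit normalizations in Theorem~\ref{vgoodB}, is essentially a by-hand rederivation of the equivariance you already have access to; you correctly flag that $\wh F_0$ acts nontrivially on torus coordinates, and you give no mechanism for handling this beyond hoping the normalizations survive. Option~(b), the $H^1$ fallback, is not justified: cyclicity of $\langle n\wh F_0'\rangle$ does not by itself force the relevant Tate group to vanish, and you provide no computation. So the gap is not fatal---the correct proof is shorter than what you wrote---but the argument as stated does not close.

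The ``in particular'' clause is handled the same way in both: $\wt\zeta$ is linear (being an extension of a degree-one character), so invariance gives $\wt\zeta([n\wh F_0',y])=\wt\zeta\bigl((y^{-1})^{n\wh F_0'}\bigr)\wt\zeta(y)=1$.
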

\begin{proof}
	Note that $\bS$ is a Sylow $d$-torus of $(\mathbf G_1,vF)$ with $\bT_1=\cent{\mathbf G_1}{\bS}$ and $N_1:=\norm{\mathbf G_1}{\bS}$, see \cite[Lem.~2.2]{S10b}. In this case $d$ is a regular number for $(\mathbf G_1,vF)$. By the proof of Theorem 4.4 in \cite{CS17C} together with Theorem \ref{vgoodB} we see that there exists an $N_1\rtimes \wh E$-equivariant extension map $\Lambda_1$ with respect to $T_1\unlhd N_1$. This proves that $\zeta$ has an $\spann< n \widehat F'_0 >$-invariant extension $\wt\zeta\in\Irr(N_{1,\zeta})$.
	Since $\wt \zeta$ is a linear character this implies $\wt\zeta([n\wh F_0',y])=1$. 
\end{proof}

\begin{proof}[Proof of Proposition \ref{propextmap}] For the proof it is sufficient to construct for every $\xi\in \Irr(C)$ some $(N\wh E)_\xi$-invariant extension $\wt \xi$ to $N_\xi$. 
	
	Recall $T_1:=\bT_1^{vF}$, $G_2:=\GG_2^{vF}$, and $C_0:=T_1G_2$. 
	
	Let $\xi\in\Irr(C)$. Since $C_0$ is the central product of $T_1$ and $G_2$ any character in $\Irr(C_0\mid \xi)$ can be written as  $\zeta.\theta \in\Irr(C_0)$ with $\zeta\in\Irr(T_1)$ and $\theta\in\Irr(G_2)$. Let $\xi_0\in \Irr(C_{\zeta.\theta} \mid \zeta.\theta)$ with $\xi_0^C=\xi$. Since $C_0\unlhd N\wh E$ we see that $(N\wh E)_{\xi}= C (N\wh E)_{\xi_0}\leq C (N\wh E)_{\theta} $.
	By Proposition~\ref{prop103c} the stabilizer $(N\wh E)_{\xi_0}$ satisfies $ N_{\xi_0}= C_{\zeta .\theta} \Cent_N (G_2)_{\xi_0}$. This group has the normal subgroup $C_0 \Cent_N (G_2)_{\xi_0}$ of index $\leq 2$. 
	
	Assume there exists some extension  $\wt \psi\in \Irr(\cent N {G_2}_{\xi_0})$ of $\zeta$. Then $\wt \psi.\theta\in\Irr(\Cent_N (G_2)_{\xi_0} G_2)$ is a well-defined extension of $\zeta.\theta$. According to \cite[Lem.~4.1]{S10b} there exists a common extension $\wt \xi_0$ of $\wt\psi.\theta$ and $\xi_0$. The character $\w \xi:=\wt\xi_0^{N_\xi}$ is then an extension of $\xi$. 
	This extension is $(N\wh E)_\xi$-invariant, if $\wt \xi_0$ is $(N\wh E)_{\xi_0}$-invariant or equivalently $\wt\psi\in\Irr(\cent{N}{G_2}_{\xi_0})$ is $(N\wh E)_{\xi_0}$-invariant. Hence for the proof it is sufficient to construct some $(N\wh E)_{\xi_0}$-invariant extension $\wt \psi$ of $\zeta$ to $\cent{N}{G_2}_{\xi_0}$.
	
	Let $h'$ and $t'$  be as in \eqref{defh'} and \eqref{deft'}. 
	Then Proposition~\ref{existTransv} implies 
	$$(N\wh E)_\theta = C_\theta \,\, (N_1,\spann<t'\wh F_0>)_{\theta}.$$ 
	Let $n\in N_1$, $t\wh F_0'\in \spann<t' \wh F_0>$ such that 
	$$(\xi_0)^{nt\wh F_0'}=\xi_0.$$
	
	In order to ensure that an extension $\wt \psi$ of $\zeta$ to $\cent{N}{G_2}_{\xi_0}$ is $(N\wh E)_{\xi_0}$-invariant it is hence sufficient to verify that 
	\begin{equation}\label{equivwtpsi}
	\wt \psi^{nt\wh F'_0}=\wt \psi. 
	\end{equation}
	
	Let $\wt \zeta$ be the extension of $\zeta$ to $N_{1,\zeta}$ from Lemma \ref{lem_extmap_Bnonreg_neu} and $\psi:=\restr \wt \zeta|{\cent{N_1}{G_2}}$.
	Let $\mathrm i\in\CC $ be a primitive fourth root of unity. 
	If $N_{1,\zeta}\not \leq N_c$ there exists some $x\in N_{1,\zeta}\setminus N_c$ , $\cent{N}{G_2}_{\zeta.\theta}= \spann<N_{c,\zeta},xh>$, and an extension $\wt \psi$ of $\psi$ to $\cent{N_1}{G_2}_{\zeta}$ is determined by 
	$$ \wt \psi(xh):=\begin{cases}
	\zeta(x) & \text{if } h_0\in\ker(\zeta),\\
	\zeta(x) \mathrm i & \otw. 
	\end{cases}$$
	Note that $\wt\psi((xh)^2)=\wt \psi(x^2)\wt \psi(h_0)$ and hence $\wt \psi$ is a well-defined linear character.
	
	It remains to verify Equation \eqref{equivwtpsi} for $\wt\psi$. 
	According to Lemma \ref{lem_extmap_Bnonreg_neu} we have $ \psi^{t\wh F'_0n}= \psi$.
	This proves  \eqref{equivwtpsi} whenever $N_{1,\zeta}\leq N_c$. 
	In the following we assume $N_{1,\zeta}\not\leq N_c$. Then Equation \eqref{equivwtpsi} holds whenever
	$$\wt\psi([t \wh F_0' n , xh ])=1
	\text{ or equivalently }
	\wt\psi([ \wh F_0' , h ]) \wt \psi([t,x ])=1, $$ 
	since $\wt \psi$ is a linear character. The verification of this statement requires to consider various cases. 
	For those considerations one uses the abelian group $\wh T_1:=\{ t\in \bT_1\mid (vF_0)(t) \in t\spann<h_0> \}\geq T_1$ and choose an extension $\wh \zeta\in\Irr(\wh T_1)$ of $\zeta$. %
	
	First we consider \textit{the case $2\nmid d$}. By Lemma~\ref{lemdodd} we can assume that $\zeta(h_0)=1$ using our assumption $N_{1,\zeta}\not\leq N_c$.  This implies $\wt \psi([\wh F_0',h])=1$. On the other hand because of $2\nmid d$ we have $v\in N_c$ by Lemma~\ref{lem106},  $h'=1_{\bGzwei}$ by \eqref{defh'} and $t'\in C_0$ by  \eqref{deft'}. This implies $t\in C_0$ and $\wt \psi([t,x])=1$. Altogether we have $\wt\psi([ \wh F_0' , h ]) \wt \psi([t,x ])=1$. 
	
	\textit{The next case is $2\mid d$ and $\zeta(h_0)=-1$}. Proposition~\ref{prop108a_2} implies that $a$ is odd and the element $x\in N_{1,\zeta}\setminus N_c$ satisfies 
	\begin{equation}
	\wt \psi([t,x])=\begin{cases}1& t\in C_0,\\
	-1& \otw, \end{cases}
	\end{equation}
	according to Remark~\ref{remNzetaNwtzeta} since $x\notin N_{\wh \zeta}$ according to Proposition~\ref{prop108a_2}.
	Because of $2\nmid a$ we have $v\notin N_c$ by Lemma~\ref{lem106} and $h'=h$ by \eqref{defh'}. This implies  $[\wh F'_0,h]=[\wh F'_0,h']$. 
	
	If $[\wh F'_0,h]=h_0$ we have $\wt \psi ([\wh F_0',h])=-1$,  $t\notin C_0$ by \eqref{deft'} and hence $\wt \psi([t,x])=-1$. 
	If $[\wh F'_0,h]=1_\bGzwei$ we have $\wt \psi ([\wh F_0',h])=1$, $t\in C_0$ by \eqref{deft'} and hence $\wt \psi([t,x])=1$. In both cases $\wt\psi([ \wh F_0' , h ]) \wt \psi([t,x ])=1$.
	
	Now we consider the remaining \textit{case where $2\mid d$ and $\zeta(h_0)=1$}. Accordingly $\wt \psi([\wh F'_0,h])=1$. If $N_{\zeta}=N_{\wh \zeta}$ , we have  $\wt \psi([t,x])=1$, as required. 
	If $N_{\zeta}\neq N_{\wh \zeta}$ , we know $2\mid a$ from Proposition~\ref{lemWxib}, hence $v\in N_c$ by Lemma~\ref{lem106} and $t\in C_0$ by \eqref{defh'} and \eqref{deft'}. Again 
	$\wt \psi([t,x])=1$, and hence $\wt\psi([ \wh F_0' , h ]) \wt \psi([t,x ])=1$. 
	This finishes the proof. 
\end{proof}

It remains to determine the relative inertia groups and analyse their characters. 

\begin{prop}\label{Wxi}
	Let $\xi\in\Irr(C)$ above some $\zeta.\theta$ where $\zeta \in \Irr(T_1)$ and $\theta\in\Irr(G_2)$. Let $\wt \xi\in\Irr(\wt C \mid \xi)$. We write $W_\xi:=N_\xi/C$ and $W_{\wt \xi}:=N_{\wt \xi}/C$.
	Let $\wh T_1:=\{x\in \bT_1 \mid (vF)(x)\in x\spann<h_0>  \}$,  $\wh \zeta\in\Irr(\wh T_1\mid \zeta)$ and $W_{\wh \zeta}:=N_{1,\wh \zeta}/T_1$.  Then:
	\begin{enumerate}
		\item $W_\xi=W_{\wt \xi}$ , if $\restr \xi|{T_1G_2}=\zeta.\theta$. 
		\item $W_\xi=W_{\zeta}$ and $W_{\wt \xi}=W_{\wh \zeta}$ , if $\restr \xi|{T_1G_2}\neq \zeta.\theta$.
	\end{enumerate}
\end{prop}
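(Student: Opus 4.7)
The central move is to reduce the stabilizer computation to the quotient $W_d = N_1/T_1$. By Proposition~\ref{prop_strucNc}, $N = N_1\,C$ with $N_1 \cap C = T_1$; since irreducible characters are invariant under conjugation by their own group, for $n = n_1 c$ with $n_1 \in N_1$ and $c \in C$ one has $\xi^n = \xi^{n_1}$. Hence $N_\xi = N_{1,\xi}\,C$ and $W_\xi = N_{1,\xi}/T_1$, viewed inside $W_d$. The analogous identity $W_{\wt\xi} = N_{1,\wt\xi}/T_1$ holds by the same argument, using $\wt N = N_1\,\wt C$ (proved as in Theorem~\ref{thm_4.4}, since $\cent{\bG}{\bS}$ is connected). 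So it suffices to compare $N_1$-stabilizers of $\xi$, $\wt\xi$, $\zeta$ and $\wh\zeta$.

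The second ingredient is the description of the $N_1$-action on $C_0 = T_1 \cdot_{\spann<h_0>} G_2$: it is the Weyl-type permutation on $T_1$, and on $G_2$ it factors through $N_1 \twoheadrightarrow N_1/N_c \hookrightarrow \Dia(G_2)$ by Proposition~\ref{prop_strucNc}. Since $\Z(\GG_2) = \spann<h_0>$ has order $2$, $\Dia(G_2)$ has order at most $2$; and by Proposition~\ref{prop103b}, whenever $N_1 \setminus N_c$ is non-empty its image in $\Dia(G_2)$ coincides with the image of any $c_0 \in C \setminus C_0$.

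For part (b), the hypothesis $\restr\xi|{C_0} \neq \zeta.\theta$ forces $|C:C_0| = 2$ and $\theta^{c_0} \neq \theta$, so $\xi = (\zeta.\theta)^C$ is irreducible with $\restr\xi|{C_0} = \zeta.\theta + \zeta.\theta^{c_0}$. For $n_1\in N_1$, $\xi^{n_1} = \xi$ is equivalent to $(\zeta^{n_1},\theta^{n_1}) \in \{(\zeta,\theta),(\zeta,\theta^{c_0})\}$, which forces $n_1 \in N_{1,\zeta}$. Conversely, for any $n_1 \in N_{1,\zeta}$ the action on $\theta$ is either trivial (if $n_1 \in N_c$) or coincides with that of $c_0$ (if $n_1 \notin N_c$, by the previous paragraph); in both cases $\theta^{n_1} \in \{\theta,\theta^{c_0}\}$, so $n_1 \in N_{1,\xi}$. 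This gives $W_\xi = W_\zeta$. Running the same argument with $\wt\xi$ in place of $\xi$ and $\wh\zeta$ in place of $\zeta$, using the central-product decomposition $\wt C = \wh T_1 \wh Z \cdot G_2$ that is the non-regular analogue of Remark~\ref{remwhC}, yields $W_{\wt\xi} = W_{\wh\zeta}$.

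For part (a), $\xi$ extends the $C$-invariant character $\zeta.\theta$, so $\theta$ is fixed by the diagonal automorphism induced by $C$. The extension $\wt\xi$ of $\xi$ to $\wt C$ is then pinned down by its central character on $\wh T_1 \leq \wt C$, which is an extension $\wh\zeta \in \Irr(\wh T_1\mid \zeta)$ of $\zeta$. The equality $W_\xi = W_{\wt\xi}$ amounts to showing that this parametrization is $N_{1,\xi}$-equivariant, i.e., that every $n_1 \in N_{1,\xi}$ already satisfies $\wh\zeta^{n_1} = \wh\zeta$. The main obstacle will be precisely this equivariance: one has to identify the extensions of $\zeta.\theta$ from $C_0$ to $C$ with the extensions of $\zeta$ from $T_1$ to $\wh T_1$ in an $N_1$-compatible way. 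Both sets are torsors under the characters of $\spann<h_0>$, and I expect this identification to follow by tracking the Lang maps for $\bT_1$ and for $\bC$ and using that $C/C_0$ and $\wh T_1/T_1$ both arise from the same non-connected component $\spann<h_0> \leq \Z(\bG)$.
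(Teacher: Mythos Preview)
Your reduction to $N_1$-stabilizers via $N=N_1C$ is correct and matches the paper. But your description of the $N_1$-action on $G_2$ is wrong, and this is not harmless. By Proposition~\ref{prop103b} the element $x_1h$ centralizes $G_2$, and since $h=\prod_{i>l'}\hh_{e_i}(\varpi)\in(\bT\cap\GG_2)^F\subseteq G_2$ (this is where $4\mid q-1$ is used, as noted in the proof of Proposition~\ref{prop_1013}), any $x_1\in N_1\setminus N_c$ acts on $G_2$ by conjugation by $h^{-1}\in G_2$, which is \emph{inner}. So $N_1$ fixes every $\theta\in\Irr(G_2)$: it does \emph{not} act as $c_0\in C\setminus C_0$ does. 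Your proof of (b) survives this by accident, since $\theta\in\{\theta,\theta^{c_0}\}$ trivially; but the misreading is exactly what leaves (a) incomplete.

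For (a) the paper does not attempt the torsor identification you sketch. It passes to $\wh C:=\{c\in\bC\mid (vF)(c)\in c\spann<h_0>\}$, the central product of $\wh T_1$ and $\wh G_2:=\{x\in\GG_2\mid (vF)(x)\in x\spann<h_0>\}$, and uses $W_{\wt\xi}=W_{\wh\xi}$ for any extension $\wh\xi$ of $\xi$ to $\wh C$ (cf.\ Remark~\ref{remwhC}). Because $N_1$ induces only inner automorphisms on $G_2$ and on $\wh G_2$, every $\wh\theta\in\Irr(\wh G_2)$ is $N_1$-invariant, whence $N_{1,\wh\xi}=N_{1,\wh\zeta}$. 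Now in case (a) the two extensions of $\xi$ to $\wh C$ have the form $\wh\zeta.\wh\theta$ and $\wh\zeta'.\wh\theta'$ with $\wh\theta\neq\wh\theta'$ (both factors change, since the Lang map for $\GG_2$ surjects onto $\spann<h_0>$). An element of $N_{1,\xi}\setminus N_{1,\wh\xi}$ would have to swap these extensions and hence send $\wh\theta$ to $\wh\theta'$, contradicting the $N_1$-invariance of $\wh\theta$. Thus $N_{1,\xi}=N_{1,\wh\xi}=N_{1,\wh\zeta}$, giving $W_\xi=W_{\wh\zeta}=W_{\wt\xi}$. The equivariance you were hoping to extract from Lang maps is precisely the statement that $N_1$ acts trivially on $\Irr(\wh G_2)$, which is the fact you had backwards.
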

\begin{proof}
	As before we denote by $\xi_0\in\Irr(C_{\theta.\xi}\mid \xi.\theta)$ the Clifford correspondent of $\xi$. From the proof of Proposition~\ref{propextmap} we know $N_\xi=N_{1,\xi}C=N_{1,\xi_0}C$ with $N_{1,\xi_0}\leq N_{1,\zeta} $ , in particular $N_{1,\xi_0}= N_{1,\zeta} $ , if $\xi_0=\zeta.\theta$ and equivalently $\restr \xi|{T_1G_2}\neq \zeta.\theta$. This implies in particular 
	$W_\xi=W_{\zeta}$ , if $\restr \xi|{T_1G_2}\neq \zeta.\theta$. 
	
	By Proposition~\ref{prop_strucNc}, $\xi$ has an extension $\wh \xi$ to the group $\wh C :=\{ c\in \bC \mid  (vF)(c) \in c\spann<h_0>\}$. By the considerations given in the proof of Proposition~\ref{prop_strucNc} we see that $N_{\wh \xi}=N_{\wt \xi}$. The group $\wh C$ is the central product of the group $\wh T_1:=\{x\in \bT_1 \mid (vF)(x)\in x\spann<h_0>  \}$ and the group $\wh G_2:=\{x\in \GG_2 \mid (vF)(x)\in x\spann<h_0>  \}$. The character $\wh \xi=\wh\zeta.\wh\theta$ with $\wh \zeta\in\Irr(\wh T_1\mid \zeta)$ and $\wh \theta\in\Irr(\wh G_2\mid \theta)$. Since according to Proposition~\ref{prop103b}, $N_1$ induces on $\wh G_2$ and $G_2$ inner automorphisms the character $\wh \theta$ is $N_1$-invariant.
	We see that $N_{\wh \xi}=C N_{1,\wh \xi}$ and $N_{1,\wh \xi}=N_{1,\wh \zeta}$. This implies $W_{\wt \xi}=W_{\wh \zeta}$.
	
	Let us now assume that $\restr \xi|{T_1G_2}=\zeta.\theta$ and hence $\xi_0\neq \zeta.\theta$. Then there exist two extensions $\wh \zeta.\wh \theta$ and $\wh\zeta'.\wh \theta'$ of $\xi$ with $\wh\zeta\neq \wh\zeta'$ and $\wh \theta\neq \wh\theta'$. Any element in $N_{1,\xi}\setminus N_{1,\wh\xi}$ has to map $\wh \zeta.\wh \theta$ to $\wh \zeta'.\wh\theta'$. Since $\wh \theta'$ is $N_1$-stable we see that $N_{1,\xi_0}=N_{1,\wh \zeta}$ in that case. 
	
	If $\restr \xi|{C_0}=\zeta.\theta$, the group $N_{1,\xi_0}$ coincides with $N_{1,\zeta}$ , $W_\xi=W_{\zeta}$ and $W_{\wt \xi}=W_{\wt\zeta}$. 
\end{proof} 

\subsection{Proof of Theorem \ref{thmA} for type $\tB_l$.}

We first finish checking assumption (iv) of Theorem~\ref{thm_loc_gen} in the above non-regular case. For any $\xi\in\Irr(C)$, $\w\xi\in\Irr(\w C\mid\xi)$, the groups $W_\xi$ and $W_{\wt\xi}$ coincide with relative inertia groups occurring in the regular case for $\zeta$ and $\wh \zeta$. Their characters were already studied in Proposition~\ref{prop_Wxi_regB}. It was proved that every $\eta_0\in\Irr(W_{\wt\xi})$ has an $\NNN_{W_d}(W_{\wt\xi}, W_\xi)_{\eta_0}$-invariant extension to $(W_\xi)_{\eta_0}$. Along with Proposition~\ref{Wxi}, this establishes assumption (iv) of Theorem~\ref{thm_loc_gen}. Assumptions (i)-(iii) are given by Lemma~\ref{centC} and Proposition~\ref{propextmap}. Theorem~\ref{thm_loc_gen} then implies conditions \AA d and \BB d in the non-regular case while Corollary~\ref{Adreg} gives them in the regular case. This applies in the setting we have worked with until now, which assumes that $q$ is a square and therefore $E$ has even order. Let us assume now that $E$ has odd order. Then \AA d holds by Lemma~\ref{simplifications}. In order to get \BB d one has to study extendibility of characters in the inclusion  $\w C\unlhd \w N$.

Let $\theta\in\Irr(\wt C)$. Recall that $N_c\leq \Cent_{N_1}(G_2)$. Let $\psi\in\Irr(\restr \theta|{T_1})$.
Let $\Lambda_1$ be an $N_1E$-equivariant extension map \wrt $T_1\unlhd N_1$. Then $\restr {\Lambda_1(\psi)}|{N_{c,\theta}}$ defines an $(N\wh E)_\theta$-invariant extension $\wt \theta$ of $\theta$ to $N_{c,\theta} \wt C$. 

Note that $\wt C N_c$ is a normal subgroup of $\wt C N_1$ with index $2$. Accordingly $|  \wt C N_{1,\theta}:  \wt C N_{c,\theta}|\leq 2$. The group $\spann<vF>$ acts trivially on $\wt CN_1$ by definition. This implies that the group ${N\wh E}_{\theta}/(\spann<v\wh F> N_{\theta} )$ acts on the set of extensions of $\wt \theta$ to $\wt C N_{1,\theta}$. There are at most $2$ extensions of $\wt \theta$ to $\wt C N_{1,\theta}$. Since $E$ is of odd order, this group is again of odd order and there is an extension of $\wt \theta$ to $\wt C N_{1,\theta}$ that is ${N\wh E}_{\theta}$-invariant by Glauberman's lemma \cite[13.8]{Isa}.

We now have \AA d and \BB d in all cases. Since \Ainfty\ is ensured by Theorem~\ref{gloBCE},
 Theorem~\ref{Glo+Loc} then implies that simple groups of type $\tB$ satisfy the inductive McKay condition for all primes. \qed

\section{The local conditions for types $\tE$}\label{sec_locE}

We now prove conditions \AA d and \BB d ($d\geq 1$) from Definition~\ref{A(d)} for $\GG_{\mathrm{sc}}^F$ of types $\tE_6$, $^2\tE_6$ and $\tE_7$. Let us recall the order of $\GG_{\mathrm{sc}}^F$ as a polynomial in $q$, the prime power associated with $F$ (see Sect.\,2.A). We have  $|\GG_{\mathrm{sc}}^F|=q^NP(q)$ with $N\geq 1$, $P$ as follows, and where $\Phi_d$ denotes the $d$-th cyclotomic polynomial (see \cite[Table 24.1]{MT}). For the information about regular numbers, see \cite[Tables 1,2,8]{Springer}.

For  $\GG_{\mathrm{sc}}^F$ of type $\tE_6$ one has $P=\Phi_{1}^{6}\Phi_{2}^{4}\Phi_{3}^{3}\Phi_{4}^{2}\Phi_{5} \Phi_{6}^{2}\Phi_{8} \Phi_{9}  \Phi_{12}  . $  All $d$'s appearing are regular except 5.

For  $\GG_{\mathrm{sc}}^F$ of type $^2\tE_6$ one has $P=\Phi_{1}^{4}\Phi_{2}^{6}\Phi_{3}^{2}\Phi_{4}^{2}\Phi_{6}^{3}\Phi_{8} \Phi_{10}  \Phi_{12}  \Phi_{18}  .$ All $d$'s appearing are regular except 10.

For  $\GG_{\mathrm{sc}}^F$ of type $\tE_7$ one has  $P=\Phi_{1}^{7}\Phi_{2}^{7}\Phi_{3}^{3}\Phi_{4}^{2}\Phi_{5} \Phi_{6}^{3} \Phi_{7} \Phi_{8} \Phi_{9} \Phi_{10}  \Phi_{12} \Phi_{14}  \Phi_{18}  .$ 
All $d$'s appearing are regular except 4, 5, 8, 10, 12.

Our checking of conditions \AA d and \BB d for regular numbers $d$ will use essentially Theorem~\ref{thm_4.4} whose assumptions will be reviewed in the following. 
Since the center of $\GG_{\mathrm{sc}}$ is cyclic one can choose a regular embedding $\bG\leq\w\GG =\bG \Z(\w\GG)$ as in Sect.\,2.A with $\Z(\w\GG)$ a torus of rank 1. Recall that $F_0$ and the graph automorphism in type $\tE_6$ also extend to $\w\GG$.

\subsection{Relative Weyl groups}

Assumption (iv) of Theorem~\ref{thm_4.4} is essentially a problem about characters of complex reflection subgroups of the Weyl groups involved. In our case they are subgroups of the group $G_{26}$ of order $1296$ or $G_8$ of order $96$ (see \cite[Table 3]{BMM93}). The following proposition and its proof are due to G. Malle whom we thank for allowing it to be included in the present paper.  Here, one takes advantage of the knowledge of centralizers of semi-simple elements (see \cite[\S~14.1]{MT} and the references given there). Shephard-Todd's notation for finite complex reflection groups $G(m,p,n)$ and $G_4,\dots ,G_{37}$ is used freely.

One considers the dual groups $\GG^*:=(\bG)^* $, $\w\GG^*$ with corresponding Frobenius $F$ dual to the one of $\bG$. For $d\geq 1$ let $\bS^* \leq  \GG^*$  denote a Sylow $d$-torus of $\GG^* $, with relative Weyl group $W_d^*:=\norm{\GG^*}{\bS^*}^F/\cent{\GG^*}{\bS^*}^F$ in $\GG^*{}^F$. For $s \in  \GG_{\mathrm{ss}}^*{}^F$ centralizing $\bS^*$ we let $W_s$ denote the relative Weyl group of a Sylow $d$-torus of $\w \GG^*$  in $\Cy_{\GG^*} (s)$ and $W_{\w s}$ the relative Weyl group of $\bS^*$ in $\Cy_{\w \GG^*} ({\w s})$, where ${\w s} $ denotes a preimage of $s$ in $\w \GG^*{}^F$. Note that this is isomorphic to the relative Weyl group of $\bS^*$ in $\Cy^\circ_{\GG^*} (s)$ and thus a subgroup of $W_s$ of index $|\Cy_{\GG^*} (s):\Cy^\circ_{\GG^*} (s)|$. Further, let $K_s := N_{W_d^*}(W_s,W_{\w s})$. For $\w \eta  \in \Irr(W_{\w s})$ one considers the following two properties:

\begin{enumerate}[label=($\ddagger$)(\roman*)] %
\item  there exists a $(K_s)_{\w \eta }$-invariant character $\eta  \in  \Irr(W_s\mid \w \eta )$,
\item there exists a character $\eta$  as in (i) which extends to $(K_s)_{\w \eta }$ . 
\end{enumerate}


We denote $G:=\GG_{\mathrm{sc}}^F$ and $G^*=\GG^*{}^F$.

\begin{prop}\label{Gunter}
Let $s \in  G^*$  centralizing a Sylow $d$-torus of $(\GG^*,F) $. \begin{enumerate}[label=(\arabic*)]
\item If $G = \tE_6(q)_{\mathrm{sc}}$  then any $\w \eta  \in \Irr(W_{\w s})$ satisfies ($\ddagger$)(i) and (ii).
\item  If $G = {}^2\tE_6(q)_{\mathrm{sc}}$  then any $\w \eta  \in \Irr(W_{\w s})$ satisfies  ($\ddagger$)(i).
\item If $G = \tE_7(q)_{\mathrm{sc}}$ then any $\w \eta  \in \Irr(W_{\w s}) $ satisfies  ($\ddagger$)(i).
\end{enumerate}
\end{prop}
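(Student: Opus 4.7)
The plan is to reduce the statement to a short list of explicit cases and then verify each by direct inspection of characters of small complex reflection groups, as in the Brou\'e--Malle--Michel framework.

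The first reduction is that when the component group $A(s):=\Cy_{\GG^*}(s)/\Cy^\circ_{\GG^*}(s)$ is trivial, one has $W_s=W_{\w s}$, and both properties ($\ddagger$)(i) and (ii) hold vacuously (any $\eta=\w\eta$ works, and cyclicity of $(K_s)_{\w\eta}/W_s$ would give extendibility by Isaacs~11.22). Since $A(s)$ embeds in $\Z(\bG)$, which is cyclic of order $2$ in type $\tE_7$ and of order $3$ in types $\tE_6$, ${}^2\tE_6$, one is reduced to a short list of quasi-isolated semisimple classes, described explicitly (e.g.\ along the lines of \cite[\S~14]{MT}). Only those for which $s$ centralizes some Sylow $d$-torus (for a regular $d$ as above) contribute, further cutting the list.

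For each such surviving pair $(s,d)$ I would compute the triple of reflection groups $W_{\w s}\unlhd W_s\le W_d^*$. The ambient group $W_d^*$ is read off from the tables in Brou\'e--Malle--Michel: for the regular $d$ occurring here, $W_d^*$ is one of $G_{26}$ (or a subquotient) for $\tE_6,{}^2\tE_6$, and a reflection group occurring as relative Weyl group for $\tE_7$ of shape $G(e,1,k)$ or $G_8$-like for small $k$. The reflection subgroup $W_{\w s}$ is then the relative Weyl group of $\bS^*$ in the connected centralizer $\Cy^\circ_{\GG^*}(s)$, and $W_s/W_{\w s}$ embeds in $A(s)$ and is thus cyclic of order at most~$3$. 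The group $K_s$ is the normalizer of the flag $W_{\w s}\le W_s$ inside $W_d^*$.

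For each such triple, assertion ($\ddagger$)(i) would be verified by Clifford theory applied to $W_{\w s}\unlhd W_s$. Given $\w\eta\in\Irr(W_{\w s})$, the fiber $\Irr(W_s\mid \w\eta)$ is either a single character (when $\w\eta$ is not $W_s$-stable) or a torsor under $\Irr(W_s/W_{\w s})$. In the first case, $\w\eta^{W_s}$ is the unique choice and is automatically $(K_s)_{\w\eta}$-invariant. In the second case, $(K_s)_{\w\eta}$ acts on a set of $\le 3$ extensions through an action of $(K_s)_{\w\eta}/W_s$ on $\Irr(W_s/W_{\w s})$, and a fixed point is produced by averaging or orbit counting, using that $|W_s/W_{\w s}|\le 3$. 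For assertion ($\ddagger$)(ii) (needed only in type $\tE_6$), the group $(K_s)_\eta/W_s$ in the arising cases is a direct product of a cyclic group and a small symmetric group, i.e.\ a Young-type wreath subquotient, for which an extension of $\eta$ exists by standard extendibility of characters on wreath products (compare Proposition~\ref{prop_maxext_wreathprod}).

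The main obstacle will be the bookkeeping: in type $\tE_7$, the combination of regular numbers $d\in\{1,2,3,6,7,9,14,18\}$ with the (few) quasi-isolated classes with non-trivial $A(s)$ produces a lengthy enumeration, and in type $\tE_6$ one additionally has to produce an explicit extension rather than just an invariant character. In each individual case the verification is elementary once the reflection pair is identified, but making sure no case is missed, and treating the subtle extendibility clause for $\tE_6$ uniformly, are the delicate points; in practice this relies on tabulated character-theoretic data for the relevant reflection groups (available via CHEVIE).
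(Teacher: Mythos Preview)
Your broad strategy---reduce to a finite list of semisimple classes and reflection-group triples, then dispose of each case by elementary arguments or explicit CHEVIE computations---is exactly what the paper does. However, several of the shortcuts you propose do not work, and one essential case is missing.

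First, the orbit-counting argument for ($\ddagger$)(i) is not valid in general. When $\w\eta$ is $W_s$-stable, the fiber $\Irr(W_s\mid\w\eta)$ is a torsor under $\Irr(W_s/W_{\w s})$, and $(K_s)_{\w\eta}/W_s$ acts on it. For $\tE_6$ one has $|W_s/W_{\w s}|\in\{1,3\}$ and in the relevant cases $K_s/W_s$ turns out to be a $2$-group, so a fixed point exists by coprimality; but for $\tE_7$ one has $|W_s/W_{\w s}|\in\{1,2\}$ and $K_s/W_s$ typically has even order (e.g.\ $\Cy_{G^*}(s)=\tA_2(q)^3.\Phi_1.2$ with $d=3$ gives $W_{\w s}=3^3$, $W_s=3^3.2$, $K_s=3^3.2^2$), so your parity argument gives nothing. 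The paper handles these cases not by orbit counting but by showing, via explicit computation, that \emph{every} character of $W_s$ extends to $K_s$, hence is $K_s$-invariant. Similarly, your claim that ($\ddagger$)(ii) holds in the connected-centralizer case because ``$(K_s)_{\w\eta}/W_s$ is cyclic'' is not justified (and is false in some of the cases in the paper's Table~1, which is why CHEVIE is invoked there).

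Second, you omit $d=4$ for $\tE_7$, which is non-regular but has a Sylow $4$-torus of rank~$2$ and $W_4^*=G_8$; this case is part of the proposition and is used later. The paper treats it by a different method: rather than enumerating semisimple classes, it lists \emph{all} reflection subgroups of $G_8$ up to conjugacy (nine of them), discards those with abelian or self-normalizing normalizer, and verifies the remaining three possibilities $\Cy_2\times\Cy_2$, $\G(2,1,2)$, $\G(4,2,2)$ directly.

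Finally, your identifications of the ambient relative Weyl groups are swapped: for $\tE_6$ the groups $W_d^*$ for $d=3,4,6$ are $G_{25}$, $G_8$, $G_5$ respectively (not $G_{26}$), while $G_{26}$ arises for $\tE_7$ with $d=3,6$ (not a $G(e,1,k)$ shape). This does not affect the method but would lead you to the wrong tables.
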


\noindent{\it Proof of Proposition~\ref{Gunter}}
We start by making some general observations. Both points of property  ($\ddagger$) certainly hold in the following cases:

\begin{enumerate}[label=(\roman*)]
\item when $K_s$ is abelian; 
\item if $W_{\w s} =\{ 1\}$, by taking for $\eta$  the trivial character of $W_s$ ; 
\item if $W_s = (K_s)_{\w \eta}$  by taking any $\eta$  above $\w \eta$ ; and 
\item if $W_{\w s}$ is a direct factor of $K_s$ , since then every character of $W_{\w s}$ is invariant and extends.
\end{enumerate}

Also,  ($\ddagger$)(i) trivially holds if $W_{\w s} = W_s$ with $\eta  = \w \eta $. We deal with the three types of groups in turn. 

First assume that $G = \tE_6(q)_{\mathrm{sc}}$. Here, the relative Weyl groups of the Sylow $d$-tori considered are the primitive complex reflection groups $W_3 = G_{25}$, $W_4 = G_8$ and
$W_6 = G_5$. In Table 1 we have collected the semi-simple elements $s \in  G^*$  (up to conjugacy of centralizers) centralizing some Sylow $d$-torus, together with the groups $W_{\w s}$ , $W_s$ and $K_s$ defined above (but omitting the trivial case $s = 1$ in which $W_{\w s} = W_s = K_s = W_d^*$ and so  ($\ddagger$) trivially holds).

Here, an entry of the form $[n]$ denotes an unspecified group of order $n$. Other notation is similar to the one used in \cite{BMM93}. The third column gives restrictions on $q$ for such elements $s$ to exist.

\begin{table}[h!]
\begin{center}
\label{tab:table1}
\begin{tabular}{|r|c|c|c|ccc|c|} %
\hline
\textrm{No.} & $\Cy_{G^*}(s)$ & $q$& $d$& $W_{\w s}$& $W_{s}$& $K_s$& \textrm{case}\\
\hline
1 & $^3\tD_4(q).\Phi_3.3$ &$\equiv 1(3) $&$3,6 $&$G_4 $&$G_4\times 3 $&$G_4\times 3 $&$ (iii)$\\
2 & $\tA_2(q)^3.3$ &$\equiv 1(3) $&$3 $&$3^3 $&$3\wr 3 $&$3\wr S_3 $&$ $\\
3 & $\Phi_3^3.3$ &$\equiv 1(3) $&$3 $&$1 $&$ 3 $&$G_{25} $&$ (ii)$\\
\hline
4  & $\tA_2(q)^2.\Phi_3$ &$  $&$3 $&$  $&$3^2 $&$3^2.2 $&$ $\\
5 & $^3\tD_4(q).\Phi_3$ &$  $&$3,6 $&$  $&$G_4$&$G_4\times 3 $&$(iv) $\\
6 & $\tA_2(q).\Phi_3^2$ &$  $&$3 $&$  $&$3 $&$3^2.2 $&$ $\\
7 & $\Phi_3^3$ &$  $&$3 $&$ $&$1 $&$G_{25} $&$(ii) $\\
\hline
8 & $\tD_4(q).\Phi_1^2.3$ &$\equiv 1(3) $&$4 $&$G(4,2,2) $&$G(4,2,2).3 $&$G_{8} $&$ $\\
\hline
9 & $\tD_5(q).\Phi_1$ &$  $&$4 $&$  $&$G(4,1,2) $&$G(4,1,2) $&$(iii) $\\
10 & $\tD_4(q).\Phi_1^2$ &$ $&$4 $&$  $&$G(4,2,2) $&$G_{8} $&$ $\\
11 & $\tA_1(q^2)^2.\Phi_1^2$ &$\mathrm{odd} $&$4 $&$  $&$2\times 2 $&$4\times 4 $&$(i) $\\
12 & $^2\tA_3(q).\tA_1(q^2).\Phi_1$ &$\mathrm{odd}  $&$4 $&$  $&$4\times 2  $&$4\times 4 $&$(i) $\\
13 & $^2\tA_3(q).\Phi_1.\Phi_4$ &$  $&$4 $&$  $&$4 $&$4\times 4 $&$(i) $\\
14 & $\tA_1(q^2).\Phi_1^2.\Phi_4$ &$  $&$4 $&$  $&$2 $&$4\times 4 $&$(i) $\\
15 & $\Phi_1^2.\Phi_4^2$ &$  $&$4 $&$  $&$1 $&$G_{8} $&$(ii) $\\ \hline
16 & $\Phi_3.\Phi_6^2.3$ &$\equiv 1(3) $&$6 $&$ 1 $&$ 3 $&$[72] $&$(ii) $\\ \hline
17 & $\tA_2(q^2).\Phi_6$ &$  $&$6 $&$  $&$ 3 $&$6\times 3 $&$(i) $\\
18 & $^2\tA_2(q).\Phi_3.\Phi_6$ &$  $&$6 $&$  $&$ 3 $&$6\times 3 $&$(i) $\\
19 & $\Phi_3.\Phi_6^2$ &$  $&$6 $&$ $&$1 $&$G_{5} $&$(ii) $\\\hline
\end{tabular}
\caption{Relative Weyl groups in $\tE_6(q)_{\mathrm{sc}}$}
\end{center}
\end{table}

The observations at the beginning of the proof only leave the five Cases 2, 4, 6, 8 and 10 to be dealt with. In Cases 4 and 10 property  ($\ddagger$)(i) holds since $ W_{\w s} = W_s $ and (ii) is satisfied as $K_s/W_s$ has cyclic Sylow subgroups. Finally, for Cases 2, 6 and 8, explicit computations using Chevie show the claim. For example, in Case 2, $W_{\w s} = W_s$ is a parabolic subgroup of $W_3 = G_{25}$ of rank 1 and $K_s$ is its centralizer in $W_3$. Chevie shows that all characters of $W_s$ extend to their inertia group in $K_s$.

Now assume that $G = {}^2\tE_6(q)_{\mathrm{sc}}$. Here the occurring relative Weyl groups and normalizers are exactly as in $\tE_6(q)$, except that the cases of $d = 3$ and $d = 6$ have to be interchanged and their centralizers be replaced by their Ennola duals. Thus the claim follows from our previous arguments.

Finally, let $G = \tE_7(q)_{\mathrm{sc}}$. Recall that we only claim property  ($\ddagger$)(i). This is certainly satisfied if $W_{\w s} = W_s$. So we can restrict ourselves to looking at elements $s$ with disconnected centralizer. In particular $q$ is odd. The relevant cases for $d = 6$ are obtained from the ones for $d = 3$ by Ennola duality. This does not change the relative Weyl groups, so it suffices to consider $d = 3$. The occurring series are collected in Table 2, again omitting the case $s = 1$.

\begin{table}[h!]
\begin{center}
\label{tab:table1}
\begin{tabular}{|r|c|c|ccc|c|} 
\hline
\textrm{No.} & $\Cy_{G^*}(s)$ & $q$&  $W_{\w s}$& $W_{s}$& $K_s$& \textrm{case}\\
\hline
1 & $\tE_6(q).\Phi_1.2$ &$  $&$G_{25} $&$G_{26} $&$G_{26} $&$ (iii)$\\
2 & $\tA_2(q)^3.\Phi_1.2$ &$\equiv 1(6) $&$3^3 $&$3^3.2 $&$3^3.2^2 $&$ $\\
3 & $\tA_2(q)^2.\Phi_1.\Phi_3^3.2$ &$ $&$3 ^2$&$3^2.2 $&$ [108] $&$ $\\
\hline
4  & $\tA_2(q).\Phi_1\Phi_3^2.2$ &$  $&$3 $&$3.2 $&$6\times 6$&$(i) $\\
5 & $\Phi_1\Phi_3^2.2$ &$  $&$1  $&$2$&$[144] $&$(ii) $\\\hline
\end{tabular}
\caption{Relative Weyl groups in $\tE_7(q)_{\mathrm{sc}}$ , $q$ odd, $d=3$}
\end{center}
\end{table}

The Cases 1, 4 and 5 are clear by our general remarks. Again by explicit computation all characters of $W_s$ extend to $K_s$ in Cases 2 and 3. 

Finally, we consider $d = 4$ for $G= \tE_7(q)_{\mathrm{sc}}$. The number $d = 4$ is not regular for the Weyl group of type $\tE_7$, and the centralizer of a Sylow 4-torus has type $\Phi_4^2.\tA_1(q)^3$. Instead of discussing all the possibilities for disconnected centralizers of suitable semi-simple elements, we take another approach. Note that the subgroup $W_{\w s}$ is a subgroup of $W_4$ generated by reflections. Now it is easy to list all subgroups of $W_4$ generated by reflections; up to conjugation there are nine non-trivial ones, isomorphic to 

$$\Cy_2, \Cy_2 \times \Cy_2, \Cy_4, \Cy_2 \times \Cy_4, \Cy_4 \times \Cy_4, \G(2,1,2), \G(4,2,2), \G(4,1,2), W_4 = G_8.$$ Six of these have a non-abelian normalizer in $W_4$, namely the two abelian groups $\Cy_2 \times  \Cy_2$, $\Cy_4 \times \Cy_4$, and of course all of the non-abelian ones. Now $\G(4,1,2)$ and $ W_4$ are self-normalizing, so there is nothing to show. Furthermore, $\Cy_4\times \Cy_4$ has index 2 in its normalizer, so again this case need not be considered. We are thus left with three possibilities for $W_{\w s}$ , up to conjugation, namely $$\Cy_2 \times \Cy_2,\ \G(2,1,2),\ \text{and}\ \G(4,2,2).$$ The only overgroup of $\G(4,2,2)$ with index 2 is G$(4,1,2)$ which is self-normalizing, so this does not give rise to a case to consider. Now let's consider $W_{\w s} = \G(2,1,2)$. Here, $N := N_{W_4}(W_{\w s})$ is a Sylow 2-subgroup of $W_4$. Two of the linear characters and the character of degree 2 extend to $N$, while the other two linear characters have an inertia subgroup of index 2, so again we are done. Finally, assume that $W_{\w s} = \Cy_2 \times  \Cy_2$. In this case $N = N_{W_4}(W_{\w s})$ has order 32, and there is a unique intermediate normal subgroup $N_1$ containing $W_{\w s}$ of index 2. Two of the four linear characters of $W_{\w s} $ extend to $N$; the other two have $N_1$ as inertia group, so we are done again. \qed

\subsection{The regular case - Extended Weyl group} We apply the above to verify the necessary statements from Theorem~\ref{thm_4.4} about characters of the local subgroups in cases where $d$ is regular for $(\bG ,F)$.

\begin{prop}\label{ExcReg}
Let $d$ be a regular number for $(\bG , F)$ where $\bG$ has type $\tE_6$ or $\tE_7$ and $F\colon\bG\to\bG$ is a Frobenius endomorphism. Then \AA d and \BB d (see Definition~\ref{A(d)}) are satisfied by $\GG_{\mathrm{sc}}^F$.
\end{prop}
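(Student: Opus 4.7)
The plan is to verify assumptions (i)--(iv) of Theorem~\ref{thm_4.4} for each pair $(\bG, F)$ of type $\tE_6$, ${}^2\tE_6$ or $\tE_7$ and each regular $d$; the type-$\tD$ assumption (v) is vacuous here. All ingredients should reduce either to known regular-element / braid-group considerations, to an analysis of small complex reflection groups, or to Proposition~\ref{Gunter}.

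\textbf{Choice of $v$ and assumptions (i), (ii).} Since $d$ is regular for $(\bG, F)$, I would apply \cite[3.1]{S09} (or explicit Springer--Broué--Michel constructions in $V$, along the lines of \cite[\S\S 5,9]{S10a} and Lemma~\ref{lemrhoVd_B}) to produce $v \in V$ so that $\rho(v)\phi_0 \in W\phi_0$ is a $\zeta$-regular element for a primitive $d$-th root of unity $\zeta$, and so that the Sylow $d$-torus $\bS$ of $(\bT,vF)$ is a Sylow $d$-torus of $(\bG,vF)$. This is assumption (i). For (ii), the equality $\rho(V_d) = W_d$ follows by lifting elements of $W_d$ through the associated braid group, exactly as in Lemma~\ref{lemrhoVd_B} and \cite[Proof of Thm.~5.3]{CS17C}; the argument uses only the complex reflection group $W_d$ and its braid group, so it applies uniformly in the exceptional cases.

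\textbf{Assumption (iii).} One needs an extension map $\Lambda_0$ with respect to $H_d \unlhd V_d$ which is $\wh V_d$-equivariant, and in the non-cyclic case (untwisted $\tE_6$) each $\Lambda_0(\lambda)$ must further extend to $(\wh V_d)_\lambda$ with $v\wh F$ in the kernel. Here $H_d$ is an elementary abelian $2$-group (we may assume $p$ odd, the case $p = 2$ being handled by absence of diagonal automorphisms and $V_d/H_d \cong W_d$ is one of the small primitive complex reflection groups listed by Malle in Proposition~\ref{Gunter}, namely a subgroup of $G_{26}$ or $G_8$ in the relevant regular cases. My plan is to tabulate the possible $W_d$ case by case, determine the structure of $V_d$ as a central extension of $W_d$ by $H_d$ from the Steinberg relations, and verify maximal extendibility of $H_d \unlhd V_d$ by an explicit Clifford-theoretic analysis modelled on Theorem~\ref{vgoodB}. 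In the $\tE_6$ case the graph automorphism $\gamma$ acts on the $(V_d, H_d)$-picture, and the kernel condition on $v\wh F$ will be enforced by choosing extensions to the central extensions $(\wh V_d)_\lambda/\langle v\wh F\rangle$ using that those quotients have small order.

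\textbf{Assumption (iv).} For each $\xi \in \Irr(C)$ and $\w\xi \in \Irr(\w C\mid \xi)$, Jordan decomposition in $\w\GG^F$ identifies, up to duality, the pair $(W_\xi, W_{\w\xi}) \subseteq W_d$ with a pair $(W_s, W_{\w s}) \subseteq W_d^*$ for a semi-simple $s \in \GG^{*F}$ centralizing the dual Sylow $d$-torus, and the group $K = \NNN_{W_d}(W_\xi, W_{\w\xi})$ corresponds to $K_s = \NNN_{W_d^*}(W_s, W_{\w s})$. Then Proposition~\ref{Gunter}(1)--(3) directly provides the invariant character $\eta \in \Irr(W_\xi \mid \eta_0)$ needed by (iv.1), together with the extendibility of $\eta$ to $\wh K_\eta$ needed by (iv.2) in the non-cyclic case (type $\tE_6$). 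The kernel condition on $v\wh F$ in (iv.2) is automatic since $\wh K/K$ acts on $W_d$ through $E$, so $v\wh F$ lies in the centre of $\wh K$ and extensions can be chosen with $v\wh F$ in the kernel.

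\textbf{Main obstacle.} I expect step (iii) in untwisted type $\tE_6$ to be the hardest: the graph automorphism $\gamma$ must be tracked through the central extension $H_d \to V_d \to W_d$ and through the further central extension by $\langle v\wh F\rangle$ in $\wh V_d$, and the compatibility \ref{5_4_iii_2} combines maximal extendibility with a kernel condition. Steps (i), (ii) are essentially formal given Springer's theory, and (iv) is offloaded to Proposition~\ref{Gunter}; so the only genuine case analysis to be performed is the one enumerated by the possibilities for $W_d$ in (iii).
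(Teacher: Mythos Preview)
Your overall strategy matches the paper's, but you overestimate the difficulty of (iii) and slightly underestimate the work in (iv).

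For (iii), there is no need to redo a case-by-case Clifford analysis of $H_d\unlhd V_d$ in each exceptional type: maximal extendibility and the $\wh V_d$-equivariance of $\Lambda_0$ (your (iii.1)) are already established in \cite[Thm.~D, Lem.~8.1, 8.2]{S09}, and the paper simply quotes these. The part you flag as the ``main obstacle'', namely (iii.2) in untwisted $\tE_6$, is in fact short once one uses the key structural fact you are missing: the graph automorphism $\gamma$ acts on $V$ as conjugation by the fixed element $\ww\in V$ of \cite[3.2(b)]{S09}. Hence $\ww\gamma$ centralises $V$, so $\wh V_d=V_dB$ with $B=\spann<\wh F_0>\spann<\ww\gamma>$ abelian and $[B,V_d]=1$. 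The extension $\wh\la$ of $\Lambda_0(\la)$ to $(\wh V_d)_\la=(V_d)_\la B$ with $v\wh F\in\ker(\wh\la)$ is then obtained by a one-line central-product argument, choosing a linear character $\mu$ on the abelian group $B\spann<v>$ compatible with $\restr{\Lambda_0(\la)}|{(V_d)_\la\cap B\spann<v>}$ and with $\mu(\wh F)=\Lambda_0(\la)(v)^{-1}$. No ``small order'' case analysis is needed.

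For (iv), two points deserve more care than you give them. First, Proposition~\ref{Gunter} only tabulates $d\in\{3,4,6\}$; for the remaining regular $d$ (e.g.\ $d=8,9,12$ in $\tE_6$) the paper observes that $\Phi_d$ divides the polynomial order of $\GG_{\mathrm{sc}}^F$ to the first power, so $\bS^F$ is cyclic and $W_d$ embeds in $\Aut(\bS^F)$, hence is abelian and (iv) is vacuous. Second, Proposition~\ref{Gunter} gives $K_{\eta_0}$-invariance, not $\wh K_{\eta_0}$-invariance; the upgrade uses that $\wh F_0$ acts trivially on $W$ and hence on $W_d$, so in types ${}^2\tE_6$ and $\tE_7$ (where $E$ is cyclic) every element of $\wh E$ acts on $W_d$ through a power of $\wh F_0$, giving $\wh K_{\eta_0}$-invariance for free. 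In untwisted $\tE_6$ the same centralising property of $B$ gives $\wh W_d=W_d\times B'$ with $B'$ abelian, so $\wh K_{\eta_0}=K_{\eta_0}\times B'$ and the extension of $\eta$ to $K_{\eta_0}$ granted by Proposition~\ref{Gunter}(1) trivially extends further with $v\wh F$ in the kernel. Your sentence ``$v\wh F$ lies in the centre of $\wh K$'' is not the right mechanism.
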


\begin{proof}
We may assume $d\geq 3$ thanks to \cite[\S~3]{MS16}. The remaining cases of regular $d\geq 3$
will be checked by applying Theorem~\ref{thm_4.4}. 

The assumptions (i), (ii) and (iii.1) of Theorem~\ref{thm_4.4} are ensured by Theorem D and Lemmas 8.1 and 8.2 of \cite{S09}. Note that while those statements apply to the normal inclusion $C\unlhd N$, they derive in  fact from the existence of an extension map (denoted there by $\Lambda '$) for the inclusion $H_d\unlhd V_d$  with the equivariance property.

Assumption (iii.2) is to be proved in the case of untwisted type $\tE_6$. Let us write $\wh E =\spann<\wh F_0>\times \spann<\gamma>$ in the notation of Sect.\,4. Let $\ww \in V$ be defined as in \cite[3.2(b)]{S09} so that $\gamma$ acts by conjugation by $\ww$ on $V$. Then $\wh V_d = V_dB$ for the abelian group $B:=\spann<\wh F_0>\spann<\ww\gamma>$. We see that $[B,V_d]=\{ 1\}$ as subgroups of $\GG_{\mathrm{sc}}^{F_0^{em}}\rtimes \wh E$. 

Let $\la\in \Irr(H_d)$ and $\Lambda_0(\la)\in\Irr ((V_d)_\la)$, this extension being ensured by what has been said before. We build an extension of $\Lambda_0(\la)$ to $(\wh V_d)_\la =( V_d)_\la B$ having $v\wh F$ in its kernel. Note that indeed $v$ centralizes $H_d$ hence stabilizes both $\la$ and $\Lambda_0(\la)$. In the abelian group $B\spann<v>$, the order of $\wh F$ is a divisor of the order of $v$ by definition of $\wh E$ in Sect.~4. So there is an extension $\mu$ of the linear character $\restr{\Lambda_0(\la)}|{( V_d)_\la\cap (B\spann<v>)}$ to $B\spann<v>$ such that $\mu (\wh F)=\Lambda_0(\la)(v)^{-1}$. Now since $\mu$ and $\Lambda_0(\la)$ coincide on $( V_d)_\la\cap (B\spann<v>)$, there is a common extension $\w \mu$ to the central product $( V_d)_\la . (B\spann<v>)$ such that $\w\mu (xy)=\Lambda_0(\la)(x)\mu(y)$ for any $x\in ( V_d)_\la$ , $y\in B\spann<v>$ (see for instance \cite[4.2]{S09}). Then $\w\mu$ is a character as required in (iii.2) of Theorem~\ref{thm_4.4}.

We explain how Proposition~\ref{Gunter} implies assumption (iv) of Theorem~\ref{thm_4.4}. For integers $d\geq 3$ such that $\Phi_d$ is present in the order of $\GG_{\mathrm{sc}}^F $ with power 1, one has that $\bS^F$ is cyclic \cite[25.7]{MT}. On the other hand, $\cent{\bG}{\bS}^F=\cent{\bG}{\bS^F}^F$ (by \cite[13.16(i), 13.17(ii)]{CE04} with regard to a prime dividing $\Phi_d(q)$)  so that $W_d=\norm{\bG}{\bS}^F/\cent{\bG}{\bS}^F$ is abelian since injecting in $\Aut(\bS^F)$. Then assumption~\ref{thm_4.4}(iv) is empty. So there only remain the integers $d\in \{ 3,4,6  \}$ for types $\tE_6$, $^2\tE_6$, $d\in\{3,6\}$ in type $\tE_7$. Note that Proposition~\ref{Gunter} is phrased using duality.
The duality between $\bG$ and $\GG^*$ provides a bijection $(\bT ,\theta)\leftrightarrow (\bT^*,s)$ between $\GG_{\mathrm{sc}}^F$-conjugacy classes of pairs where $\bT$ is a maximal $F$-stable torus of $\bG$ and $\theta\in \Irr (\bT^F)$ and on the other side $\GG^*{}^F$-conjugacy classes of pairs where $\bT^*$ is a maximal $F$-stable torus of $\GG^*$ and $s\in\bT^*{ }^F $ (see \cite[9.A]{B06}). This satisfies $\norm{\bG}{\bT,\theta }^F/\bT^F\cong \norm{\cent{\GG^*}{s}}{\bT^*}^F/\bT^*{}^F$ thanks to the description of centralizers in terms of roots. This is also compatible with a regular embedding $\GG_{\mathrm{sc}}\leq \w\GG$ and its dual $\GG_{\mathrm{ad}}^*
\twoheadleftarrow \w\GG^*$ (see \cite[9.C]{B06}). In our case of a character $\xi\in\Irr (\bC^F)$ where $\bC $ is the centralizer of a Sylow $d$-torus and $\bC$ is a maximal torus, the pair $(\bC ,\xi)$ relates with $(\bC^*{}, s)$ where $\bC^*$ centralizes a Sylow $d$-torus (since $\bC$ and $\bC^*$ must have the same polynomial order, being in $F$-duality). By duality $W_{\w\xi}\leq W_{\xi}\leq W_d:=\norm{\bG}{\bC}^F/\bC^F$ corresponds to $W_{\w s}\leq W_{s}\leq W_d^*:=\norm{\GG^*}{\bC^*}^F/\bC^*{}^F$, the groups studied in Proposition~\ref{Gunter} (see also \cite[Proof of 16.2]{B06}). Proposition~\ref{Gunter} tells us that, with the notations of \ref{thm_4.4}(iv), for any $\eta_0\in\Irr(W_{\w\xi})$ there is 

- a $K_{\eta_0}$-invariant $\eta\in\Irr(W_\xi \mid  \eta_0)$, and

- when $\GG_{\mathrm{sc}}^F$ is of type $\tE_6$, some $\eta$ as above extends to $K_{\eta_0}$. 

In types $^2\tE_6$ or $\tE_7$, assumption~\ref{thm_4.4}(iv) asks for some $\wh K_{\eta_0}$-invariant $\eta\in\Irr(W_\xi \mid  \eta_0)$. We know that $\wh F_0\in \wh N$ and it acts trivially on $W$, hence also on $W_d$ and therefore fixes $\eta$. In type $\tE_7$ this generates $\wh E$ so it is enough to get our claim. In type $^2\tE_6$, each element of $\wh E$ acts by a power of $\wh F_0$ on $W_d$ since $\gamma_0\wh F_0^m$ acts by $F$. So again $K_{\eta_0}$ acts as $\wh K_{\eta_0}$ on $W_\xi$.

In type $\tE_6$, we have seen above that $\wh V_d=V_d=\spann<\wh F_0>\spann<\ww\gamma>$. Then $\wh W_d=W_d\Cy_{\spann<\ww\gamma>}(v)\spann<\wh F_0>$ which is of the form $W_d\times B'$ with an abelian group $B'$. Then the group  $\wh K_{\eta_0}$ is $ K_{\eta_0}\times B'$ and statement~\ref{thm_4.4}(iv) is a trivial consequence of the two points we have.

All assumptions of Theorem~\ref{thm_4.4} are satisfied, so we get conditions \AA d and \BB d for each regular number $d$.
\end{proof}

\subsection{Proof of Theorem A for types $\tE$}

Let $\ell$ be a prime. We keep $\GG_{\mathrm{sc}}^F$ a finite quasi-simple group of type $\tE_6(q)$, $^2\tE_6(q)$ or $\tE_7(q)$, for $q$ a power of the prime $p$. We must show that the simple group $\GG_{\mathrm{sc}}^F/\Z(\GG_{\mathrm{sc}}^F)$ satisfies the inductive McKay condition of \cite{IMN}. If $\ell =p$, this is ensured by \cite[1.1]{S12}. If $\ell =2$, this is given by \cite[p. 905]{MS16}. So we assume that $\ell$ is an odd prime $\not=p$, dividing $|\GG_{\mathrm{sc}}^F|$ and we let $d$ be the order of $q$ mod $\ell$.

Assume $d\not=4$ if the type is $\tE_7$. If $d$ is a regular number then we have seen in Proposition~\ref{ExcReg} that \AA d and \BB d are satisfied, so Theorem A holds with regards to $\ell$ thanks to Theorem~\ref{gloBCE} and Theorem~\ref{Glo+Loc}. 
If $d$ is not regular, then $d=5$ for type $\tE_6$,  $d=10$ for type $^2\tE_6$, or  $d\in\{4,5,8,10,12\}$ for type $\tE_7$, as  recalled at the beginning of this section. Since $d\not=4$, one sees easily that $\Phi_d$ divides the polynomial order to the power 1 only and no other $\Phi_{d\ell^a}$ with $a\geq 1$ divides it. Then $|\GG_{\mathrm{sc}}^F|_\ell =|\bS^F|_\ell$ for $\bS$ a Sylow $d$-torus of rank 1. So all $\ell$-subgroups of $\GG_{\mathrm{sc}}^F$ are cyclic by \cite[25.7]{MT}. Now \cite[1.1]{KoSp16} implies that all $\ell$-blocks satisfy a block-by-block version of the so-called inductive AM-condition \cite[7.2]{S13}. We then have the inductive AM-condition for all $\ell$-blocks of our quasi-simple group. This implies the inductive McKay condition for that simple group and the prime $\ell$, as is easily seen by comparing \cite[7.2]{S13} and \cite[2.6]{S12}.

\medskip

It remains to check simple groups $\tE_7(q)$ for the odd primes $\ell\not= p$ such that the order of $q$ mod $\ell$ is 4. We prove \AA 4 and \BB 4 in order to apply Theorem~\ref{Glo+Loc}, knowing that \Ainfty\ holds thanks to Theorem~\ref{gloBCE}. For that, we check the assumptions of Theorem~\ref{thm_loc_gen}.

We use the notation of Sect.\,2.A. The simple roots being numbered $\Delta=\{ \al_1, \dots ,\al_7  \}$ as in \cite[Table 9.1]{MT}, one takes $$v:=\nn_{\al_3}(1)\nn_{\al_4}(1)\nn_{\al_2}(1)\nn_{\al_3}(1)\nn_{\al_4}(1)\nn_{\al_5}(1)\in V$$ (see Definition~\ref{VhatE}) and $\bS$ the Sylow 4-torus of $(\bT ,vF)$ as in \cite[\S~7.2]{S09}, \cite[\S~5.3]{S07}.

The requirements (i) and (iii) of Theorem~\ref{thm_loc_gen} are given by \cite[8.2]{S09}. 
We may assume $p$ is odd since otherwise one could take $\bG=\w\GG$ in Theorem~\ref{thm_loc_gen} and the other requirements of Theorem~\ref{thm_loc_gen} would then be trivial.
 
We use the general notations $N$, $\bC$, $C=\bC^{vF}$, $\wh N$, $\w C$, $\w N$ from Theorem~\ref{thm_loc_gen}. Note that $\wh N = N\times \wh E$ where $\wh E$ is cyclic of order $|V|m$ with generator $\wh F_0$ acting by $F_0$ on $\w\GG$.

 One denotes $\bT_1=\spann<\hh_{\al_i}(t)\mid t\in \FF^\times , 2\leq i\leq 5>$, $T_1=\bT_1^{vF}$, $G_2=[\bC ,
\bC]^{vF}$, $C_0=T_1G_2\unlhd C$ with index 4 and $\GG_{2,i}$ ($i= 1,2,3$) three $F$-stable subgroups of $[\bC,\bC]$ defined as in the proof below with isomorphisms $\iota_i: \SL_2(q)\to G_{2,i}:=\GG_{2,i}^F$ 
. We have $G_2:=[\bC,\bC]^F=G_{2,1}\times G_{2,2}\times G_{2,3}$ (see \cite[\S~5.3]{S07}). 

\begin{lem}\label{CtildeG2}
The image of $\w C$ in $\Out(G_2)$ is a $V_4\times\Cy_2$ where $V_4$ is the Klein 4-group and corresponds to the image of $C$. The action of the summand $\Cy_2$ is by a diagonal (non-interior) automorphism on $G_{2,1}$ and interior on $G_{2,2}$ and $G_{2,3}$. On the other hand each non-trivial element of $V_4$ acts by simultaneous diagonal  (non-interior) automorphisms on two of the $G_{2,i}$'s and interior on the third.
\end{lem}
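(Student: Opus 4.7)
Since $\bG$ is simply connected of type $\tE_7$ and $\bC=\cent{\bG}{\bS}$ is a Levi subgroup, its derived group $[\bC,\bC]$ is simply connected of type $\tA_1\times\tA_1\times\tA_1$, hence equals the internal direct product $\GG_{2,1}\GG_{2,2}\GG_{2,3}$ of three copies of $\SL_2(\FF)$. Each $\GG_{2,i}$ is $F$-stable, so $G_2=G_{2,1}\times G_{2,2}\times G_{2,3}\cong \SL_2(q)^3$; since $q$ is odd, the diagonal outer automorphism group is $(\Cy_2)^3$, with distinguished generators $a,b,c$ acting non-inner on $G_{2,1},G_{2,2},G_{2,3}$ respectively. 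Because $\w\bC$ is connected it preserves each simple component $\GG_{2,i}$, so $\w C\leq\w\bC^F$ preserves each $G_{2,i}$, and its action is by conjugation through a maximal torus — hence diagonal. Thus the image of $\w C$ in $\Out(G_2)$ lies in $(\Cy_2)^3$, and since $|V_4\times \Cy_2|=8=|(\Cy_2)^3|$, the lemma is equivalent to asserting that the image is all of $(\Cy_2)^3$, with $C$ projecting onto the Klein subgroup $\{1,ab,ac,bc\}$ and $\w C/C$ supplying the generator $a$.

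To identify the image of $C$ as this particular Klein subgroup (rather than some other Klein subgroup of $(\Cy_2)^3$), the key point is to locate $\Z(\bG)\cong \Cy_2$ inside $\Z([\bC,\bC])=\langle z_1\rangle\times\langle z_2\rangle\times\langle z_3\rangle\cong (\Cy_2)^3$, where $z_i$ is the non-trivial element of $\Z(\GG_{2,i})$. A direct coroot-lattice computation inside the $\tE_7$ root datum — using the explicit simple coroots of the $\tA_1^3$ subsystem determined by $\bS$ — shows that the non-trivial $z\in \Z(\bG)$ satisfies $z=z_1z_2z_3$, i.e.\ $\Z(\bG)$ is the diagonal $\Cy_2$. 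Consequently the conjugation action of $\bC$ on $[\bC,\bC]$ induces, modulo inner automorphisms, a map into $(\Cy_2)^3/\langle abc\rangle\cong V_4=\{1,ab,ac,bc\}$. Taking $F$-fixed points via Lang--Steinberg and combining this with the stated index $|C:C_0|=4$ (where $C_0=T_1G_2$) forces surjectivity of $C$ onto $V_4$, whose three non-trivial elements are then manifestly diagonal on two of the $G_{2,i}$'s and inner on the third.

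For the extra $\Cy_2$ coming from $\w C/C$, use that $\Z(\w\GG)$ is a one-dimensional torus extending $\Z(\bG)$, so that $|\w C:C\cdot \Z(\w\GG)^F|=|\Z(\bG)^F|=2$ by Lang--Steinberg. Choose a generator $\w t\in\w C$ of this quotient: its image in $(\Cy_2)^3$ cannot lie in the image $V_4$ of $C$ (otherwise $\w t$ would act on $G_2$ as an element of $\bG$), hence lies in one of the cosets represented by $a$, $b$ or $c$. To decide which, one picks a specific cocharacter of $\Z(\w\GG)$ and pairs it with the three $\tA_1$-coroots of $[\bC,\bC]$; the pairings cannot all coincide, and the non-symmetric outcome singles out one factor, which in the labelling fixed below the lemma statement is $G_{2,1}$.

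The main obstacle is precisely this last identification: paragraphs one and two are essentially formal consequences of the structure of Levi subgroups of simply connected groups and of Lang's theorem, but paragraph three depends on an explicit root-datum calculation inherently tied to the specific choice of $v$, of $\bS$, and of the enumeration $\GG_{2,1},\GG_{2,2},\GG_{2,3}$ to be spelled out below, and cannot be settled by the abstract structure of $\w\bC$ alone.
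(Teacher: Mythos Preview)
Your plan has a genuine gap in the second paragraph. You correctly compute that $\Z(\bG)$ sits inside $\Z(\GG_2)\cong(\Cy_2)^3$ as the diagonal $\langle z_1z_2z_3\rangle$, but the inference you draw from this --- that the image of $C$ in $\Out(G_2)$ is the ``even'' Klein subgroup $\{1,ab,ac,bc\}$ --- does not follow. What the computation of $\Z(\bG)$ gives you (together with $\Z(\bG)\not\leq\bT_1$) is only that $\bT_1\cap\GG_2$ is one of the four Klein four-subgroups of $(\Cy_2)^3$ not containing $abc$; three of those, namely $\langle a,b\rangle$, $\langle a,c\rangle$, $\langle b,c\rangle$, are \emph{not} the even one. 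Your phrase ``a map into $(\Cy_2)^3/\langle abc\rangle\cong V_4=\{1,ab,ac,bc\}$'' conflates the quotient group with a particular section of it, and there is no reason the image of $C$ should land in that section rather than another.

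The paper closes this gap by direct computation: it exhibits explicit generators $z_1=\hh_{\al_2}(-1)\hh_{\al_5}(-1)$ and $z_2=\hh_{\al_2}(-1)\hh_{\al_3}(-1)$ of $\bT_1\cap\GG_2$ and then rewrites them in the basis $\hh_{\beta_j}(-1)$ of $\Z(\GG_2)$ to read off that each hits exactly two of the three factors. This is the content you are missing, and it is not purely formal --- it uses the concrete description of the $\beta_j$ in terms of the $\al_i$. An alternative fix, which you did not invoke, would be to observe that $N$ (through $U_6$) permutes the three components $G_{2,i}$ transitively while normalising $C$; the image of $C$ is therefore stable under permuting $a,b,c$, and the even subgroup is the unique such Klein four-group. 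Either route works, but some input beyond the position of $\Z(\bG)$ is required. So contrary to your concluding assessment, paragraph two is not a formal consequence of structure theory and Lang's theorem; it needs exactly the same kind of explicit root computation that you flag as unavoidable in paragraph three.
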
 
\begin{proof} The simple roots of $\bC$ with regard to $\bT$ are $\{ \beta_1:=\al_7, \beta_2 :=\al_2 +\al_3 +2\al_4 +2\al_5 +2\al_6 +\al_7 , \beta_3 :=2\al_1+2\al_2 +3\al_3 +4\al_4 +3\al_5 +2\al_6 +\al_7 \}$ (see \cite[5.3.4(a)]{S07}). Letting $\GG_{2,i}:=\spann<\xx_{\pm\beta_i}(\FF)>$ and $G_{2,i}=\GG_{2,i}^F$ , one has $[\bC ,\bC]=\GG_{2,1}\GG_{2,2}\GG_{2,3}$ (central product). Let $z_1= \hh_{\al_2}(-1) \hh_{\al_5}(-1)$, $z_2=\ \hh_{\al_2}(-1) \hh_{\al_3}(-1)$, and $z_3 :=\hh_{\al_2}(-1) \hh_{\al_5}(-1)\hh_{\al_7}(-1) $.  One has $\spann<z_1,z_2> =\bT_1\cap\GG_2\leq \Z(\GG_2)$, $\Z(\bG)=\spann<z_3>$ and $\Z(\bC)=\spann<z_1,z_2,z_3>$ (see \cite[5.3.4(c)]{S07}). Let $t_1, t_2, t_3\in\bT$ such that ${}(vF)(t_i)t_i^{-1}=z_i$.
Then the action of each $t_i$ on $[\bC ,\bC]^{vF}$ can be read off on the corresponding $(vF)(t_i)t_i^{-1}\in \Z(\bC)$. The two first act as said about the factor $V_4$ and $t_1t_3$ as said about the factor $\Cy_2$ since the $z_i$'s can be rewritten as $z_1=\hh_{\beta_2}(-1)\hh_{\beta_3}(-1)$, $z_2=\hh_{\beta_1}(-1)\hh_{\beta_2}(-1)$ and $z_3=\hh_{\beta_1}(-1)\hh_{\beta_2}(-1)\hh_{\beta_3}(-1)$ (use \cite[p. 94]{S07}) thus giving their projections on the $\GG_{2,i}$'s. \end{proof}

We now check assumption (ii) of Theorem~\ref{thm_loc_gen}. Let $\calT ^\circ$ be a $\GL_2(q)$-transversal in $\Irr(\SL_2(q))$ that is stable under field automorphisms on matrix entries. The existence of $\calT^\circ$ is ensured by the fact that $\SL_2(q)$ satisfies condition \Ainfty, see \cite[4.1]{CS17A}. Denote $\calT_i=\iota_i(\calT^\circ)$ for $i=1,2,3$, $\calT:=\Irr(C\mid \calT_1\times\calT_2\times\calT_3 )$. This defines an $N\wh E$-stable set and is a $\wt C$-transversal. 

We get \ref{thm_loc_gen}(ii) once we show that $$(\wh N\w N)_{N.\xi} =\wh N_{N.\xi} \w N_{N.\xi}  $$
for any $\xi\in\calT$.
Let $\xi\in\calT$. The above equation holds if, for every $n\in N$, $e\in \wh N$, $d\in \w N$ with

$$\xi ^{ned}=\xi   \text{  there exist  }n_1, n_2\in N \text{ such that } \xi^{n_1e}=\xi=\xi^{n_2d}.\eqno(*)$$

By the choice of $\calT_0$, the characters of $\calT_1\times\calT_2\times\calT_3$ satisfy (*). 

 Recall that $U_6:=\spann<\nn_{\al_i}\mid 1\leq i\leq 6>^{vF}$ satisfies  $N=U_6C$ (see \cite[5.3.6(b)]{S07}).   
Recall that on $G_2=G_{2,1}\times G_{2,2}\times G_{2,3}$, $\wh E$ acts by $F_0$, and $U_6$ acts by permutation of the summands $G_{2,i}\cong \SL_2(q)$  (see \cite[5.3.6]{S07}).

Observe first that maximal extendibility holds with respect to $C_0\unlhd \w C$. Indeed, denote $\calL(x)=x^{-1}\cdot (vF)(x)$ for $x\in \bG$. We have that $T_1\unlhd  T_1':=\{x\in\bT_1\mid  \calL(x)\in \bT_1\cap\GG_2 \}$ are abelian, $G_{2,i}\unlhd  G'_{2,i}$ with cyclic quotients for $ G'_{2,i} =\{ x\in\GG_{2,i}\mid \calL(x)\in \Z (\GG_{2,i})  \}$. This shows that $ C':=\{x\in \bC\mid \calL(x)\in \Z(\bG)   \}$ is a subgroup of the central product $L':= T'_1 . G'_{2,1}. G'_{2,2}. G'_{2,3}$. We have maximal extendibility with respect to the inclusion $C_0\unlhd  L'$. Now $\w C$ is a subgroup of $L'\Z(\w\GG)^{F^2}$ so we deduce maximal extendibility with respect to $C_0\unlhd \w C$.

Let $\xi_0=\xi_{0,1}\times\xi_{0,2}\times\xi_{0,3} \in \calT_1\times \calT_2\times \calT_3\subseteq \Irr (G_2)$ such that $\xi\in\Irr(C\mid\xi_0)$. Assume first that  $C_{\xi_0}\gneq C_0$. By the action of $C$ on $G_2$ described in Lemma~\ref{CtildeG2}, then at least two of the $\xi_{0,i}$'s are $\GL_2(q)$-invariant. Then, by the action of $\w C$ also described in Lemma~\ref{CtildeG2}, we have $C\w C_{\xi_0}=\w C$. By Clifford theory and the extendibility mentioned above, there is $\w \xi_0\in\Irr (C_{\xi_0}\mid \xi_0)$ with $\w\xi_0^C =\xi$. Then $\w C_\xi =C\w C_{\xi_0} =\w C$ and we get that $\xi^d=\xi^{n'}$ for some $n'\in N$ in (*) above since $\w N=N\w C$. This implies (*) in this case.

Assume $C_{\xi_0}= C_0$. Then $\xi =(\w\xi_0)^C$ where $\w\xi_0$ is an extension of $\xi_0$ to $C_0$. Then the stabilizer of $N.\xi$ in $\w N\wh N$ is the one of $N.\xi_0$ and assumption (ii) of Theorem~\ref{thm_loc_gen} in the form of (*) above comes from the fact that $\xi_0$ itself satisfies (*) as pointed out before.

We now check requirement \ref{thm_loc_gen}(iv). The proof of Proposition~\ref{Gunter} shows in the present case that for every pair $W_{\w s}\unlhd W'$ of subgroups of $W_4$ with $\w s\in\w\bC^*{}^F$ and $|W'/W_{\w s}|\leq 2$, and any $\eta\in\Irr(W_{\w s})$ there exists $\eta'\in\Irr(W'\mid\eta )$ which is fixed under $\norm{W_4}{W_{\w s},W'}_{\eta }$. Since on the other hand $\wh E$ acts trivially on $W$, this implies our claim once we check that the subgroups $(W_4)_{\w\xi}\unlhd (W_4)_{\xi}$ for $\xi\in\Irr( C)$, $\w\xi\in\Irr(\w C_\xi\mid \xi)$ are of the above type $W_{\w s}\unlhd W'$. For the first one, note that $W_4$ acts as $U_6$ by permuting the components of type $\tA_1$. Then \ref{thm_loc_gen}(iv) comes from the fact that if $\w\xi\in\cE (\w C,\w s)$, then one may have $W_{\w \eta}\neq W_{\w s}$ only if $W_{\w s}$ permutes summands of type $\tA_1$ where $\w\eta$ has distinct unipotent correspondent via Jordan decomposition $\cE (\w C,\w s)\leftrightarrow \cE(\cent{\w\bC^*}{\w s}^F,1) $. But then it is easy to find some $\w s'$ with $W_{\w s'}=W_{\w \eta}$. For the second subgroup $W'$ it is clear that $(W_4)_{\w\xi}\unlhd (W_4)_{\xi}$ with index $\leq 2$ since $\restr{\w\xi}|{C}$ is $\xi$ or the sum of $\xi$ and a conjugate. 

We now have all requirements of Theorem~\ref{thm_loc_gen}, so that \AA 4 and \BB 4 are satisfied by any $\bG$ of type $\tE_7$. As said before this finishes our proof of Theorem~\ref{thmA} for finite simple groups of Lie types $\tE_6$, $^2\tE_6$ and $\tE_7$. \qed

\newpage

\end{document}